\newif\ifdraft
\definecolor{labelkey}{gray}{0.5}
\tikzset{commutative diagrams/arrow style=math font}
\newlength{\myarrowsize} 
\newenvironment{diagram*}[2]{%
\[%
\begin{tikzpicture}[>=cmto,baseline=(current bounding box.center),%
	to/.style={->,font=\scriptsize,cap=round},%
	into/.style={cmhook->,font=\scriptsize,cap=round},%
	onto/.style={-cmonto,font=\scriptsize,cap=round},%
	math/.style={matrix of math nodes, row sep=#2, column sep=#1,%
		text height=1.5ex, text depth=0.25ex}]%
}{%
\end{tikzpicture}%
\]%
\ignorespacesafterend%
}
\DeclareMathOperator{\Exc}{Exc}
\DeclareMathOperator{\Hilb}{Hilb}
\DeclareMathOperator{\Spec}{Spec}
\DeclareMathOperator{\id}{id}
\renewcommand{\Im}{\operatorname{Im}}
\DeclareMathOperator{\Supp}{Supp}
\DeclareMathOperator{\codim}{codim}
\DeclareMathOperator{\Sym}{Sym}
\DeclareMathOperator{\PGL}{PGL}
\DeclareMathOperator{\SL}{SL}
\DeclareMathOperator{\Var}{Var}
\newcommand{\sA}{\scr{A}}
\newcommand{\sB}{\scr{B}}
\newcommand{\sE}{\scr{E}}
\newcommand{\sL}{\scr{L}}
\newcommand{\sF}{\scr{F}}
\newcommand{\sG}{\scr{G}}
\newcommand{\sH}{\scr{H}}
\newcommand{\sM}{\scr{M}}
\newcommand{\sN}{\scr{N}}
\newcommand{\sO}{\scr{O}}
\newcommand{\sQ}{\scr{Q}}
\newcommand{\sW}{\scr{W}}
\newcommand{\theoremref}[1]{\hyperref[#1]{Theorem~\ref*{#1}}}
\newcommand{\lemmaref}[1]{\hyperref[#1]{Lemma~\ref*{#1}}}
\newcommand{\definitionref}[1]{\hyperref[#1]{Definition~\ref*{#1}}}
\newcommand{\propositionref}[1]{\hyperref[#1]{Proposition~\ref*{#1}}}
\newcommand{\conjectureref}[1]{\hyperref[#1]{Conjecture~\ref*{#1}}}
\newcommand{\corollaryref}[1]{\hyperref[#1]{Corollary~\ref*{#1}}}
\newcommand{\exampleref}[1]{\hyperref[#1]{Example~\ref*{#1}}}
\let\old@caption\caption
\renewcommand*{\caption}[1]{%
	\setcounter{figure}{\value{equation}}%
	\stepcounter{equation}%
	\old@caption{#1}\relax%
}
\newcounter{intro}
\newtheorem{intro-conjecture}[intro]{Conjecture}
\newtheorem{intro-corollary}[intro]{Corollary}
\newtheorem{intro-theorem}[intro]{Theorem}
\newcommand{\parref}[1]{\hyperref[#1]{\S\ref*{#1}}}
\newcommand*\if@single[3]{%
  \setbox0\hbox{${\mathaccent"0362{#1}}^H$}%
  \setbox2\hbox{${\mathaccent"0362{\kern0pt#1}}^H$}%
  \ifdim\ht0=\ht2 #3\else #2\fi
  }
\newcommand*\rel@kern[1]{\kern#1\dimexpr\macc@kerna}
\newcommand*\widebar[1]{\@ifnextchar^{{\wide@bar{#1}{0}}}{\wide@bar{#1}{1}}}
\newcommand*\wide@bar[2]{\if@single{#1}{\wide@bar@{#1}{#2}{1}}{\wide@bar@{#1}{#2}{2}}}
\newcommand*\wide@bar@[3]{%
  \begingroup
  \def\mathaccent##1##2{%
    \if#32 \let\macc@nucleus\first@char \fi
    \setbox\z@\hbox{$\macc@style{\macc@nucleus}_{}$}%
    \setbox\tw@\hbox{$\macc@style{\macc@nucleus}{}_{}$}%
    \dimen@\wd\tw@
    \advance\dimen@-\wd\z@
    \divide\dimen@ 3
    \@tempdima\wd\tw@
    \advance\@tempdima-\scriptspace
    \divide\@tempdima 10
    \advance\dimen@-\@tempdima
    \ifdim\dimen@>\z@ \dimen@0pt\fi
    \rel@kern{0.6}\kern-\dimen@
    \if#31
      \overline{\rel@kern{-0.6}\kern\dimen@\macc@nucleus\rel@kern{0.4}\kern\dimen@}%
      \advance\dimen@0.4\dimexpr\macc@kerna
      \let\final@kern#2%
      \ifdim\dimen@<\z@ \let\final@kern1\fi
      \if\final@kern1 \kern-\dimen@\fi
    \else
      \overline{\rel@kern{-0.6}\kern\dimen@#1}%
    \fi
  }%
  \macc@depth\@ne
  \let\math@bgroup\@empty \let\math@egroup\macc@set@skewchar
  \mathsurround\z@ \frozen@everymath{\mathgroup\macc@group\relax}%
  \macc@set@skewchar\relax
  \let\mathaccentV\macc@nested@a
  \if#31
    \macc@nested@a\relax111{#1}%
  \else
    \def\gobble@till@marker##1\endmarker{}%
    \futurelet\first@char\gobble@till@marker#1\endmarker
    \ifcat\noexpand\first@char A\else
      \def\first@char{}%
    \fi
    \macc@nested@a\relax111{\first@char}%
  \fi
  \endgroup
}
\DeclareFontFamily{OMS}{rsfs}{\skewchar\font'60}
\DeclareFontShape{OMS}{rsfs}{m}{n}{<-5>rsfs5 <5-7>rsfs7 <7->rsfs10 }{}
\DeclareSymbolFont{rsfs}{OMS}{rsfs}{m}{n}
\DeclareSymbolFontAlphabet{\scr}{rsfs}
\numberwithin{figure}{section}
\DeclareMathOperator{\disc}{disc}
\DeclareMathOperator{\Gal}{Gal}
\DeclareMathOperator{\Ob}{Ob}
\DeclareMathOperator{\pr}{pr}
\DeclareMathOperator{\R}{R}
\DeclareMathOperator{\mfM}{\mathfrak{M}}
\DeclareMathOperator{\dR}{\mathbf{R}}
\newcommand{\bC}{\mathbb{C}}
\newcommand{\bN}{\mathbb{N}}
\newcommand{\wtilde}{\widetilde}
\newcommand{\hooklongrightarrow}{\lhook\joinrel\longrightarrow}
\theoremstyle{plain}
\newtheorem{theorem}{Theorem}[section]
\newtheorem{proposition}[theorem]{Proposition}
\newtheorem{corollary}[theorem]{Corollary}
\newtheorem{lemma}[theorem]{Lemma}
\theoremstyle{definition}
\newtheorem{definition}[theorem]{Definition}
\newtheorem{explanation}[theorem]{Explanation}
\newtheorem{assumption}[theorem]{Assumption}
\theoremstyle{remark}
\newtheorem{remark}[theorem]{Remark}
\newtheorem{observation}[theorem]{Observation}
\newtheorem{notation}[theorem]{Notation}
\newtheorem{set-up}[theorem]{Set-up}
\newtheorem{claim}[theorem]{Claim}
\newtheorem{subclaim}[theorem]{Subclaim}
\newtheorem{remark-def}[theorem]{Remark-Definition}
\newtheorem{theorem-def}[theorem]{Theorem-Definition}
\setlist[enumerate]{label=(\thetheorem.\arabic*), before={\setcounter{enumi}{\value{equation}}}, after={\setcounter{equation}{\value{enumi}}}}
\numberwithin{equation}{theorem}
\let\amsmath@bigm\bigm
\renewcommand{\bigm}[1]{%
  \ifcsname fenced@\string#1\endcsname
    \expandafter\@firstoftwo
  \else
    \expandafter\@secondoftwo
  \fi
  {\expandafter\amsmath@bigm\csname fenced@\string#1\endcsname}%
  {\amsmath@bigm#1}%
}
\newcommand{\DeclareFence}[2]{\@namedef{fenced@\string#1}{#2}}
\DeclareFence{\mid}{|}
\begin{document}

\title[birational geometry of smooth families]
{birational geometry of smooth families of varieties admitting good minimal models}

\author{Behrouz Taji}
\address{Behrouz Taji, School of Mathematics and Statistics - Red Centre,
The University of New South Wales, NSW 2052 Australia}
\email{\href{mailto:b.taji@unsw.edu.au}{b.taji@unsw.edu.au}}
\urladdr{\href{https://web.maths.unsw.edu.au/~btaji/}{https://web.maths.unsw.edu.au/~btaji/}}

\keywords{Families of manifolds, minimal models, Kodaira dimension, variation of Hodge structures,
moduli of polarized varieties, canonical singularities.}

\subjclass[2010]{14D06, 14D23, 14E05, 14E30, 14D07.}


\setlength{\parskip}{0.19\baselineskip}


\begin{abstract}
In this paper we study families of projective 
manifolds with good minimal models.
After constructing a suitable moduli functor for polarized varieties with 
canonical singularities, we show that, if not birationally isotrivial, the base spaces of such families 
support subsheaves of log-pluridifferentials
with positive Kodaira dimension. Consequently we prove that, 
over special base schemes, families of this type can only be birationally isotrivial 
and, as a result, confirm a conjecture of Kebekus and Kov\'acs.
\end{abstract}

\maketitle

\tableofcontents

\section{Introduction and main results}
\label{sect:Section1-Introduction}

A conjecture of Shafarevich and Viehweg 
predicts that smooth projective families of manifolds with ample canonical bundle (canonically polarized)
whose algebraic structure maximally varies have base spaces of log-general type.
This conjecture was settled through the 
culmination of works of many people, including Parshin \cite{Parshin68}, Arakelov \cite{Arakelov71}, 
Kov\'acs~\cite{Kovacs02}, Viehweg-Zuo~\cite{VZ02}, 
Kebekus-Kov\'acs~\cite{KK08}, \cite{KK10}, Patakfalvi~\cite{MR2871152}
and Campana-P\u{a}un~\cite{CP14},~\cite{CP16}.

More recently, triggered by~\cite{Vie-Zuo01} and the result of Popa-Schnell~\cite{PS15}, 
it has been speculated that far more general results should hold
for a considerably larger category of projective manifolds; those with \emph{good minimal models}.
In other words there is a conjectural connection between (birational) variation 
in smooth projective family of non-uniruled manifolds and global geometric properties 
of their base. 
In this setting the most general conjecture is a generalization of a conjecture of Campana which we resolve in 
this paper.

\smallskip

\begin{theorem}[isotriviality over special base]
\label{thm:iso}
Let $U$ and $V$ be smooth quasi-projective varieties. If $V$ is special, then every smooth projective family 
$f_U: U\to V$ of varieties with good minimal models is birationally isotrivial.
\end{theorem}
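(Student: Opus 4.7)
The plan is to argue by contradiction: assume that $f_U$ is not birationally isotrivial, so $\Var(f_U) \geq 1$, and derive a contradiction with the hypothesis that $V$ is special. The crux is to construct, on a suitable compactification of $V$, a rank-one sheaf inside a symmetric power of the log-cotangent bundle with positive Kodaira--Iitaka dimension; such a sheaf is forbidden by Campana--P\u{a}un's characterisation of special log pairs.

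First I would fix a smooth projective compactification $Y \supseteq V$ so that $D := Y \setminus V$ is a reduced simple normal crossings divisor. After finitely many blowups of $Y$ supported on $D$ together with a semistable reduction along $D$ (possibly replacing $Y$ by a suitable Galois cover branched in $D$), one extends $f_U$ to a projective morphism $f \colon X \to Y$ with well-controlled degenerations over $D$. The moduli functor for polarised varieties with canonical singularities, constructed earlier in the paper, then produces a classifying map $\mu \colon V \to \mfM$ whose image has dimension $\Var(f_U) \geq 1$. The first main theorem of the paper (the positivity statement promised in the abstract) should then produce an integer $N \gg 0$ and an invertible subsheaf
\[
\sA \,\hookrightarrow\, \Sym^{N}\bigl(\Omega^1_Y(\log D)\bigr)
\]
with $\kappa(Y, \sA) \geq \Var(f_U) \geq 1$.

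On the other hand, Campana--P\u{a}un's theorem on orbifold tensors asserts that if $(Y, D)$ is a special log pair -- which is exactly the assumption placed on $V$ -- then every invertible subsheaf of $\Sym^{N}\Omega^1_Y(\log D)$ has Kodaira--Iitaka dimension at most zero. Applied to $\sA$, this contradicts $\kappa(Y, \sA) \geq 1$, and therefore $f_U$ must be birationally isotrivial. The main obstacle is clearly the construction of $\sA$: extending the Viehweg--Zuo and Popa--Schnell positivity constructions from canonically polarised families to families whose fibres are only assumed to admit good minimal models requires both the refined moduli functor above and new weak-positivity statements for the associated Hodge-theoretic direct images. By contrast, the final step is a black-box invocation of Campana--P\u{a}un.
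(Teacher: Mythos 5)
Your overall strategy -- contradict specialness by producing a positive invertible subsheaf of a tensor power of $\Omega^1_Y(\log D)$ with Kodaira dimension at least $\Var(f_U)$ -- is indeed the right one, and your first two steps (compactify, invoke the moduli functor and Theorem~\ref{thm:sheaves}) match the paper. The gap is in your final step, and it is a genuine one.

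You invoke as a black box the assertion that, for a special pair $(Y,D)$, \emph{every} invertible subsheaf of $\Sym^N\Omega^1_Y(\log D)$ has Kodaira--Iitaka dimension at most zero. This is not a theorem of Campana--P\u{a}un, and as far as I know it is not a theorem at all. Campana's definition of special constrains invertible subsheaves of the \emph{exterior} powers $\Omega^p_Y(\log D)$ (demanding $\kappa(\sL) < p$), and what Campana--P\u{a}un \cite{CP14,CP16} combined with Viehweg--Zuo and Popa--Schnell give is the much weaker implication: a \emph{big} invertible subsheaf of $(\Omega^1_Y(\log D))^{\otimes k}$ forces $(Y,D)$ to be of log-general type. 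When $\Var(f_U) < \dim V$, Theorem~\ref{thm:sheaves} only hands you $\kappa(\sA) \geq \Var(f_U) \geq 1$, not bigness, so you cannot conclude directly. In fact the discrepancy between ``$\kappa \geq 1$'' and ``big'' is precisely the obstacle that prevented the Kebekus--Kov\'acs conjecture from being settled earlier, as the introduction explains.

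What is actually needed, and what the paper supplies, is the refinement in Theorem~\ref{thm:refine}: the line bundle $\sL \subseteq (\Omega^1_{B}(\log D))^{\otimes k}$ \emph{generically descends} along the compactified moduli map $\overline{\mu}_{V_\eta}\colon B \to W \subseteq \overline{M}_h^{[N]}$ in the sense of Jabbusch--Kebekus. After passing to a neat model $d\colon B' \to W'$ of that fibration, Jabbusch--Kebekus \cite[Cor.~5.8]{MR2860268} pushes $\sL$ down to an invertible subsheaf $\sL_{W'}$ of the \emph{orbifold} pluri-differentials $\big(\Omega^1_{W'}(\log D_d^{\mathrm{orb}})\big)^{\otimes_{\mathcal C} k}$. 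On $W'$ the Kodaira dimension of $\sL_{W'}$ is \emph{maximal}, equal to $\dim W'$, and only now do the Campana--P\u{a}un-type arguments (packaged in \cite[Thm.~5.2]{Taji16}) apply, giving that $(W', D_d^{\mathrm{orb}})$ is of log-general type and hence that $B'$ -- and therefore $B$ -- is not special. Without the descent to the moduli image, a merely positive-$\kappa$ subsheaf of $(\Omega^1_Y(\log D))^{\otimes k}$ does not contradict specialness, so your last sentence cannot be a black-box invocation; it is where the real work is.
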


\smallskip

In this article all schemes are over $\bC$. 
Following ~\cite{Viehweg83} and Kawamata~\cite[pp.~5--6]{Kawamata85} 
we define $\Var(f_U)$ by the transcendence degree of a \emph{minimal closed 
field of definition} $K$ for $f_U$. We note that $K$ is the minimal (in terms of inclusion)
algebraically closed field in the algebraic closure $\overline{\bC(V)}$ of the function field 
$\bC(V)$ for which there is a $K$-variety $W$ such that 
$U\times_V \Spec\big( \overline{\bC(V)} \big)$ is birationally equivalent 
to $W\times_{Spec(K)} \Spec(\overline{\bC(V)})$ (see Definition~\ref{def:VAR}). 
By \emph{birationally isotrivial} we mean $\Var(f_U)=0$. 
We recall that an $n$-dimensional smooth quasi-projective variety $V$ is called \emph{special} if, for $1\leq p\leq n$, every 
invertible subsheaf $\sL \subseteq  \Omega^p_{B} (\log D)$ verifies the inequality 
$\kappa(\sL) < p$, where $(B,D)$ is any smooth compatification of $V$, cf.~\cite{Cam04}.
Varieties with zero Kodaira dimension~\cite[Thm~5.1]{Cam04} and rationally connected manifolds
are important examples of special varieties.

In \cite[Sect.~5]{Taj18}
it was shown that, thanks to Campana's results on the orbifold $C_{n.m}$ conjecture, 
once Theorem~\ref{thm:iso} is established the following conjecture of Kebekus-Kov\'acs~\cite[Conj.~1.6]{KK08}
(formulated in this general form in~\cite{PS15}) follows as a consequence.

\begin{theorem}[resolution of Kebekus-Kov\'acs Conjecture]
\label{thm:kk}
Let $f_U: U\to V$ be a smooth projective family of manifolds with good minimal models. Then, either
\begin{enumerate}
\item $\kappa(V) = -\infty$ and $\Var(f_U) < \dim V$, or 
\item $\kappa(V)\geq 0$ and $\Var(f_U)\leq \kappa(V)$.
\end{enumerate}
\end{theorem}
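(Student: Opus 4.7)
The plan is to derive Theorem~\ref{thm:kk} from Theorem~\ref{thm:iso} by a fiberwise reduction, in the spirit of \cite[Sect.~5]{Taj18}. In both cases the strategy is the same: one builds a fibration $g\colon V \dashrightarrow W$ whose general fiber is special and whose base dimension already matches the claimed bound, then applies Theorem~\ref{thm:iso} to the restriction of $f_U$ over the generic fibers of $g$ to obtain birational isotriviality there, and finally concludes $\Var(f_U)\leq \dim W$ by the standard descent behavior of $\Var$ under restriction to fibers of a fibration (the same formal property that underlies Kawamata's $C_{n,m}$-type arguments for variation).

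For case (2), where $\kappa(V)\geq 0$, I would use the Iitaka fibration. After passing to a suitable birational model and compactification, one has a fibration $I\colon V\to V^{\mathrm{It}}$ with $\dim V^{\mathrm{It}} = \kappa(V)$ whose general fiber $F$ satisfies $\kappa(F) = 0$; by the theorem of Campana recalled in the introduction, such $F$ are special. Restricting $f_U$ to $F$ gives a smooth projective family of manifolds with good minimal models over a special base, so Theorem~\ref{thm:iso} delivers birational isotriviality of $f_U|_F$, and descent then produces $\Var(f_U)\leq \kappa(V)$. For case (1), where $\kappa(V) = -\infty$, I would replace the Iitaka fibration by Campana's core fibration $c\colon V \dashrightarrow C(V)$, whose general fiber is special by construction. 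The same argument via Theorem~\ref{thm:iso} gives $\Var(f_U)\leq \dim C(V)$, and it suffices to verify that $\dim C(V) < \dim V$. If instead $V$ coincided with its own core, then $V$ would be of general orbifold type in Campana's sense, and Campana's results on the orbifold $C_{n,m}$ conjecture in that setting force $\kappa(V)\geq 0$, contradicting the standing assumption.

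The main obstacle is the passage from the classical setting to Campana's orbifold framework. The core map $c$ carries orbifold divisors on both $V$ and $C(V)$, and one has to verify that restricting $f_U$ to a general fiber of $c$ still produces a smooth family over a base that is \emph{special} in the precise sense used in Theorem~\ref{thm:iso}, and that the descent inequality $\Var(f_U)\leq \dim W$ continues to hold when $g$ is only an almost-holomorphic orbifold fibration. This bookkeeping is precisely what is carried out in \cite[Sect.~5]{Taj18}, so the role of Theorem~\ref{thm:iso} here is to supply the missing ingredient---birational isotriviality over special bases for families with good minimal models---after which the two cases of Theorem~\ref{thm:kk} follow by the reduction above.
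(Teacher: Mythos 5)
Your proposal matches the paper's approach: the paper gives no independent proof of Theorem~\ref{thm:kk} but states explicitly that, by \cite[Sect.~5]{Taj18} together with Campana's orbifold $C_{n,m}$ results, Theorem~\ref{thm:kk} follows formally once Theorem~\ref{thm:iso} is established, and your Iitaka/core-fibration reduction is precisely the content of that section. The detail you flag as the ``main obstacle''---that restriction to fibers of the core map and the descent of $\Var$ along orbifold fibrations need care---is exactly what \cite[Sect.~5]{Taj18} handles, so your outline is faithful to the actual derivation.
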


When $\Var(f_U)$ is maximal ($\Var(f_U) = \dim V$), these conjectures
are all equivalent to Viehweg's original conjecture
generalized to the setting of manifolds admitting good minimal models. The latter is a 
result of~\cite{PS15} combined with~\cite{CP14}. 

For canonically polarized fibers Theorem~\ref{thm:iso}
was settled in~\cite{Taji16}. A key component 
of the proof was the following celebrated result of~\cite{Vie-Zuo03a} for the base space
of a projective family $f_U: U \to V$ of canonically polarized manifolds.

\begin{quote}
$(*)$  There are $k\in \bN$ and an invertible subsheaf $\sL \subseteq \big(  \Omega^1_B(\log D) \big)^{\otimes k}$ 
     such that $\kappa(\sL) \geq \Var(f_U)$.
\end{quote}

Establishing $(*)$ in the more general context of projective manifolds with good minimal
models has been an important goal in this topic. In its absence, a weaker result was established 
in~\cite{Taj18} where it was shown that for projective families with good minimal models we have:

\begin{quote}
$(**)$  There are $k\in \bN$, a pseudo-effective line bundle $\sB$ and a line bundle $\sL$ on $B$, with 
 $(\sL \otimes \sB) \subseteq \big( \Omega^1_B(\log D)  \big)^{\otimes k}$, such that $\kappa(\sL) \geq \Var(f_U)$. 
\end{quote}

Clearly $(**)$ is equivalent to $(*)$ when variation is maximal, in which
case the result is due to~\cite{PS15}, and~\cite{Vie-Zuo01} when the base is
of dimension one. 
But, as it is shown in~\cite{Taj18} and~\cite[Subsect.~4.3]{PS15}, the discrepancy 
between $(*)$ and $(**)$ poses a major obstacle in proving Kebekus-Kov\'acs Conjecture in its
full generality. In this paper we close this gap and prove the following result.

\begin{theorem}\label{thm:sheaves}
Let $f_U: U\to V$ be a smooth, projective and non-birationally isotrivial morphism 
of smooth quasi-projective varieties $U$ and $V$ with positive relative dimension. 
Let $(B,D)$ be a smooth compactification of $V$.
If the fibers of $f_U$ have good minimal models, then 
there exist $k \in \bN$ and an invertible subsheaf $\sL \subseteq \big( \Omega^1_B(\log D)  \big)^{\otimes k}$  
such that $\kappa(B, \sL) \geq \Var(f_U)$.
\end{theorem}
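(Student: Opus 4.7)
The plan is to upgrade the weaker statement $(**)$ of \cite{Taj18} to the sharper $(*)$ by refining the construction using the moduli functor for polarized varieties with canonical singularities alluded to in the abstract. As a first step, I would choose a smooth compactification $f: X \to B$ of $f_U$ with $f^{-1}(D)$ of simple normal crossing support. Using the hypothesis that the fibers of $f_U$ admit good minimal models, one passes fiberwise to the canonical model to obtain, after suitable shrinking and base change, a family of polarized varieties with canonical singularities, polarized by a power of the relative canonical sheaf. By the moduli functor constructed earlier in the paper, this produces a moduli map $\mu: V \to M$ whose image has dimension equal to $\Var(f_U)$.

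Next, I would apply Viehweg's fiber product trick: for $s\gg 0$, form $U^{(s)} = U \times_V \cdots \times_V U$ and a good birational model $X^{(s)}\to B$, together with an auxiliary cyclic cover associated to a very general section of a high tensor power of the relative canonical sheaf of the canonical model. Coupled with the weak positivity of Viehweg and the pseudo-effectivity theorem of Popa--Schnell for pushforwards of relative dualizing sheaves, one obtains a line bundle $\sL$ on $B$ with $\kappa(B,\sL)\geq \Var(f_U)$ inside a pushforward of $\omega^{[N]}_{X^{(s),\mathrm{can}}/B}$. The next step is to transfer this positivity onto $\Omega^1_B(\log D)^{\otimes k}$ via a Kodaira--Spencer / Viehweg--Zuo-type map arising from the variation of Hodge structure attached to the cyclic cover.

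The heart of the proof, and the place where the present paper improves on $(**)$, lies in showing that the auxiliary pseudo-effective bundle $\sB$ of the earlier statement can be absorbed into the left-hand side. The plan is to exploit the fact that the polarization on the canonical model is now intrinsic (a power of $\omega_{X^{\mathrm{can}}/B}$) rather than extrinsic, so that the Kodaira--Spencer map can be factored through a Higgs-bundle coming from a genuine variation of Hodge structure on the moduli cover; here one uses a Bogomolov--Sommese-type vanishing on the total space of this cover to rule out the contribution of $\sB$. Alternatively, one can absorb $\sB$ using a comparison of saturations in $\Omega^1_B(\log D)^{\otimes k}$ combined with the big-ness of $\sL$ to trade the pseudo-effective part for a power of $\sL$ itself.

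The main obstacle I anticipate is handling the singularities in the canonical models of the fibers when applying the Hodge-module and positivity machinery: Popa--Schnell and Viehweg--Zuo in their original form work for smooth morphisms, and extending them to the canonically polarized family $X^{\mathrm{can}} \to B$ with canonical singularities requires a careful use of log-Hodge modules, together with compatibility with base change, the orbifold structures that appear along $D$, and Kollár--Kov\'acs-type results on rational singularities to ensure the relevant pushforwards enjoy the expected positivity and torsion-freeness properties. Once this technical core is in place, the conclusion $\kappa(B,\sL)\geq \Var(f_U)$ follows by combining the two inclusions above.
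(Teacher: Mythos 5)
Your proposal has the right general ingredients (fiberwise canonical models, the moduli functor for canonically polarized varieties with canonical singularities, fiber products, cyclic covers, Viehweg--Zuo Higgs theory, and the recognition that the pseudo-effective factor $\sB$ of $(**)$ is the obstruction to remove), but it misses the structural mechanism that actually eliminates $\sB$, and the two alternatives you sketch for absorbing it would not work as stated.

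The paper does not try to absorb $\sB$ directly on $B$. It constructs, via Proposition~\ref{prop:maximal}, an auxiliary pair of smooth varieties: a complete-intersection subvariety $Z \subset B$ of dimension exactly $\Var(f_U)$ over which the pulled-back family has \emph{maximal} variation, and a generically finite cover $\gamma : Z^+ \to B$ fitting into a (codimension-one) flattening $g: Z^+ \to Z$ compatible with the moduli map. The Viehweg--Zuo positivity and the Hodge-theoretic systems of Section~\ref{sect:Section2-Subsheaves} are built over $Z$ (where the bigness of $\sA \cong \sG_0$ is genuine) and then pulled back to $Z^+$. Proposition~\ref{prop:descent} descends $\det(\sN_m^+)^{-1}$, the pseudo-effective correction term, to a pseudo-effective line bundle $\sB_m$ on $Z$: the key point being that on the fibers of $g$ this determinant is trivial (by the isomorphism of systems away from $D_{f^+}$ together with weak seminegativity), so it descends through the flat proper map $g$. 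Because $\sA$ is big on $Z$, the twist $\sA \otimes \sB_m$ is automatically big of Kodaira dimension $\dim Z = \Var(f_U)$. The remaining and delicate step is Claim~\ref{claim:smaller} (resting on Lemmas~\ref{BigLemma1}--\ref{BigLemma2} and the functoriality results of Section~\ref{sect:Section2-Subsheaves}): the Higgs field of the system $(\wtilde \sG, \wtilde \theta)$ on $Z^+$ lands inside $\gamma^*\Omega^1_B(\log D_f) \otimes \wtilde\sG$ rather than all of $\Omega^1_{Z^+}$, so that the injection $g^*(\sA \otimes \sB_m) \hookrightarrow \gamma^*(\Omega^1_B(\log D))^{\otimes k}$ makes sense; Galois descent then yields the desired $\sL$ on $B$.

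Your two suggested mechanisms both founder on the same issue. In the second (``comparison of saturations combined with bigness of $\sL$''), $\sL$ is simply not big on $B$ when $\Var(f_U) < \dim B$, which is precisely the case one must handle; trading $\sB$ for a power of $\sL$ requires exactly the bigness you don't have. In the first (Bogomolov--Sommese vanishing on the total space of a moduli cover to rule out $\sB$), there is no mechanism to make the pseudo-effective determinant of the kernel disappear on $B$ without the fibration $g: Z^+ \to Z$; the paper's argument does not make $\sB$ vanish at all --- it pushes $\sB$ down to the smaller base $Z$ where it is harmless. The missing idea in your sketch is thus the construction and use of the auxiliary diagram $B \xleftarrow{\gamma} Z^+ \xrightarrow{g} Z \to \overline{M}_h^{[N]}$, and the functorial descent of kernels along $g$, which is what the entire Section~\ref{sect:Section2-Subsheaves} of the paper is set up to justify.
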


The fundamental reason underlying the difference between the two results $(**)$ and 
Theorem~\ref{thm:sheaves}
is that while the proof of the former makes no use of a suitable moduli space associated to 
a relative minimal model program for the family $f_U$, the improvement in the 
latter heavily depends on a well-behaved moduli functor
that we construct in Section~\ref{sect:Section3-Functor} for any projective family of manifolds
with good minimal models.

\begin{theorem}\label{thm:functor}
Let $f_U: U\to V$ be a smooth projective family of varieties admitting good minimal models.
For every family $f_{U'}:U' \to V$ resulting from a relative good minimal model program for $f_U$, 
after removing a closed subscheme of $V$, there is
an ample line bundle $\sL$ on $U'$ and a moduli functor $\mfM^{[N]}$ such
that 
\[
( f_{U'} : U' \longrightarrow  V , \sL ) \in \mfM^{[N]}_h(V) ,
\]
where $h$ is a fixed Hilbert polynomial. 
Moreover, the functor $\mfM_h^{[N]}$ has a 
coarse moduli space $M_h^{[N]}$
and that $\Var(f_{U})$ is equal to the dimension of the image of $V$ 
under the associated moduli map.
\end{theorem}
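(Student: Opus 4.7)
The plan is to combine a relative good minimal model program with Viehweg's approach to constructing coarse moduli of polarized varieties with mild singularities.

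First, run the relative good minimal model program for $f_U/V$. Since every fiber admits a good minimal model, by the relative version of the results of Birkar-Cascini-Hacon-McKernan, together with family-of-minimal-models arguments (in the spirit of Hacon-Xu and Kawamata's deformation invariance of plurigenera), after removing a closed subscheme of $V$ the program produces a flat projective morphism $f_{U'}: U' \to V$ whose fibers $X'_v$ are good minimal models with canonical singularities and semi-ample $\omega_{X'_v}$. Choose $N$ to be a common Cartier index for the canonical sheaves of the fibers (which exists after further shrinking of $V$ by boundedness of the fiberwise index) so that $\omega_{U'/V}^{[N]}$ is an invertible sheaf whose formation commutes with base change to geometric fibers.

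Second, construct the ample line bundle. Let $\sA$ be a sufficiently ample line bundle on $V$ (or on a compactification restricted to $V$), and set $\sL := \omega_{U'/V}^{[N]} \otimes f_{U'}^*\sA$. On each fiber, $\sL|_{X'_v} \cong \omega_{X'_v}^{[N]}$ is semi-ample; by relative-to-absolute ampleness criteria, enlarging $\sA$ makes $\sL$ ample on $U'$ after passing to a further tensor power if necessary. Flatness of $f_{U'}$ together with the base-change compatibility of $\omega_{U'/V}^{[N]}$ ensures that the Hilbert polynomial $h(m) = \chi(X'_v, \sL^{\otimes m}|_{X'_v})$ is independent of $v$.

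Next, define the moduli functor $\mfM^{[N]}_h$: for a scheme $S$, let $\mfM^{[N]}_h(S)$ consist of isomorphism classes of pairs $(g\colon Y\to S,\ L)$ where $g$ is flat and projective, each fiber $Y_s$ has canonical singularities with $\omega^{[N]}_{Y_s}$ invertible, $L$ is an ample line bundle on $Y$ with fiberwise Hilbert polynomial $h$, and a compatibility condition relating $L$ to $\omega^{[N]}_{Y/S}$ (modeled on the construction of $\sL$ above). Properties (i)--(ii) of the theorem are immediate from Steps~1--2. For existence of a coarse moduli space, use that: (a) polarized varieties of fixed Hilbert polynomial are bounded (Matsusaka-Mumford / Kollár), so the functor is parametrized by a finite-type locally closed subscheme of a suitable Hilbert scheme; (b) the locus of canonical singularities is constructible by the Elkik-Kollár openness theorem; and (c) Viehweg's GIT quotient construction for polarized varieties (as in his \emph{Quasi-projective moduli for polarized manifolds}) applies to yield the coarse moduli space $M^{[N]}_h$ as a $\PGL$-quotient.

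The main obstacle is verifying that this moduli functor behaves well under base change and specialization: specifically, ensuring that canonical singularities are preserved in families (so the functor is closed under base change), that isomorphisms of polarized pairs form a well-behaved equivalence relation compatible with GIT, and that boundedness continues to hold for pairs $(Y,L)$ with $L$ of the specific form $\omega_{Y}^{[N]}\otimes (\text{pulled back ample})$ rather than canonically polarized pairs. These issues sit at the interface of Viehweg's polarized moduli theory and the KSBA moduli machinery, and overcoming them is precisely what distinguishes the good-minimal-model case from the canonically polarized case treated in \cite{Taj16}.
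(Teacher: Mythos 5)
The central step of your construction fails: with $\sL := \omega_{U'/V}^{[N]} \otimes f_{U'}^*\sA$, the restriction to each fiber is $\sL|_{X'_v} \cong \omega_{X'_v}^{[N]}$, which is only \emph{semi-ample}, not ample (for Calabi--Yau fibers it is trivial; for fibers with $0 < \kappa < \dim$ it is semi-ample but not big). Twisting by $f_{U'}^*\sA$ does not change the restriction to any fiber, so no ``relative-to-absolute'' argument can rescue ampleness of $\sL$ on $U'$: the criterion in \cite[Sect.~1.7]{Laz04-I} requires $\sL$ to be $f_{U'}$-\emph{ample} to begin with, which it is not. The paper circumvents this entirely by \emph{decoupling} the polarization from the relative canonical sheaf: one simply picks an arbitrary very ample line bundle $\sL$ on the projective $V$-scheme $U'$ (which exists for free) and replaces it by a high power so that, by semicontinuity, the cohomology-vanishing condition of Item~\ref{object3} holds fiberwise; $\omega_{U'/V}^{[N]}$ enters the definition of $\mfM^{[N]}$ only via the intrinsic condition that each fiber has canonical singularities and semi-ample invertible $\omega_{Y}^{[N]}$, with no compatibility imposed between $L$ and $\omega_Y^{[N]}$.

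A second, more diffuse gap is in your existence argument for the coarse moduli space. The real technical content there is proving that the action of $\PGL$ on the parametrizing Hilbert subscheme is \emph{proper} (equivalently, establishing separatedness of $\mfM^{[N]}_h$), and your points (a)--(c) do not address it. In the paper this is done by a reduction to the known separatedness of moduli of stable \emph{pairs}: given two families over a DVR that agree over the generic point, one uses the fiberwise very ample polarization to choose compatible divisors $D_i \in |\sL_i|$ making $(X_{i,0},\frac{1}{m}D_{i,0})$ log-canonical, so that $(X_i, \frac{1}{m}D_i) \to B$ are stable families of pairs in the sense of Koll\'ar's book, whose separatedness over regular bases then forces the original polarized families to coincide. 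Without something playing the role of this trick, Viehweg's quotient machinery cannot be invoked, and your ``(c)'' remains a placeholder rather than a proof.
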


The existence of a functor $\mfM^{[N]}$ as in Theorem~\ref{thm:functor}, 
approximating enough properties of the well-known functor for canonically 
polarized manifolds \cite{Viehweg95} for a prescribed family $f_U$ 
was not known before (see Proposition~\ref{prop:functor} and Theorem~\ref{VarModuli} for more details), 
which explains the focus of \cite[Subsect.~4.3]{PS15} and subsequently \cite{Taj18} 
on the application of Abundance type results to tackle Kebekus-Kov\'acs Conjecture.
The key advantage that Theorem~\ref{thm:functor} offers is that instead of 
constructing $\sL$ at the base of $f: X \to B$ we do so at the level of a
\emph{moduli stack}; a smooth projective variety $Z$ equipped with a generically 
finite morphism to $M_h^{[N]}$ and parametrizing a
new family, now with maximal (birational) variation. 
But once variation is maximal, again $(*)$ and $(**)$ are equivalent and
the pseudo-effective line bundle $\sB$ in $(**)$ can essentially be ignored.

Since there are no maps from $B$ to $Z$, the next difficulty is then to lift 
this big line bundle on $Z$ to a line bundle on $B$. We resolve this problem 
by showing that the construction of such invertible sheaves are in a sense
functorial. More precisely we show that the Hodge theoretic constructions 
in~\cite{Taj18}, from which these line bundle arise, verify various functorial properties
that are sufficiently robust for the construction of the line bundle $\sL$ in Theorem~\ref{thm:sheaves},
using the one constructed at the level of moduli stacks. 
This forms the main content of Section~\ref{sect:Section2-Subsheaves}.

\subsection{Notes on previously known results} 
When dimension of the base and fibers are equal to one, Viehweg's 
hyperbolicity conjecture was proved by Parshin
\cite{Parshin68}, in the compact case,
and in general by Arakelov \cite{Arakelov71}. For higher dimensional fibers and assuming 
that $\dim(V)=1$, this conjecture was confirmed by Kov\'acs~\cite{Kovacs02}, in the canonically polarized case, 
and by Viehweg and Zuo~\cite{Vie-Zuo01} in general.
Over Abelian varieties Viehweg's conjecture was solved by Kov\'acs~\cite{Kovacs97c}.
When $\dim(V)=2$ or $3$, it was resolved by Kebekus and Kov\'acs, cf.~\cite{KK08} and \cite{KK10}.
In the compact case it was settled by Patakfalvi~\cite{MR2871152}.
In the canonically polarized case, and when $\dim V\leq 3$, Theorem~\ref{thm:iso}
is due to Jabbusch and Kebekus~\cite{MR2860268}.
Using $(**)$ Kebekus-Kov\'acs conjecture is settled in~\cite{Taj18} under the 
assumption that $\dim V \leq 5$. More recently Theorem~\ref{thm:iso} for fibers of general type
has appeared in the work of Wei-Wu~\cite{WW20}.


\subsection{Acknowledgements} 
I am very thankful to Fr\'ed\'eric Campana. 
This project was triggered by his visit to Sydney Mathematical Research Institute (SMRI)
and his help was decisive throughout 
the process of writing this paper.
I also owe a special debt of gratitude to S\'andor 
Kov\'acs for answering my multiple questions and providing detailed feedback regarding 
Section~\ref{sect:Section3-Functor} of this paper. 
I would also like to thank Yajnaseni Dutta for helpful comments and Sung Gi Park for  
pointing out a gap in the arguments in an earlier version of Section 2.

\section{Functorial properties of subsheaves of extended variation of Hodge structures}
\label{sect:Section2-Subsheaves}

Our aim in this section is to show that the Hodge theoretic constructions in~\cite{Taj18}
enjoy various functorial properties. These will play a crucial role in the proof of Theorems~\ref{thm:sheaves}
and~\ref{thm:iso} in Section~\ref{sect:Section4-MainThm}.

\begin{notation}[discriminant]\label{notation1}
For a morphism $f:X \to Y$ of quasi-projective varieties with connected fibers, by 
$D_f$ we denote the divisorial part of the discriminant locus $\disc(f)$.
We define $\Delta_f$ to be the maximal reduced divisor supported over $f^{-1}D_f$.

\end{notation}

\subsection{Geometric setup}\label{GeomSetUp}
Let $f:X\to Y$ be a morphism of smooth quasi-projective varieties with connected fibers and
relative dimension $n$. Let $\sM$ be a line bundle on $X$. We will sometimes need the 
extra assumption that 

\begin{equation}\label{eq:extra}
H^0 (T, \mu^*\sM)\neq 0 ,
\end{equation}
for some proper surjective morphism $\mu: T\to X$ from 
a smooth quasi-projective variety $T$.
For example, the assumption (\ref{eq:extra}) is valid when $H^0 (X, \sM^m)\neq 0$, in which case
$T$ can be taken to be any desingularization of the cyclic cover associated 
to a prescribed global section of $\sM^m$~\cite{BG71} (see also~\cite[Prop.~4.1.6]{Laz04-I}).

Now,  let $g: Y^+ \to Y$ be a morphism 
of smooth quasi-projective varieties and set $g': X^+\to X$ to be a strong 
desingularization\footnote{A desingularization that restricts to an isomorphism over the regular locus.}
of $Y^+ \times_Y X$ with the resulting family $f^+:X^+ \to Y^+$. 
Next, we define $\sM^+: = (g')^*\sM$. 
Assuming that (\ref{eq:extra}) holds, let $T^+$ be any smooth quasi-projective variety 
with a birational surjective morphism to a strong desingularization of $(T \times_X X^+)$ with induced 
maps $g'': T^+ \to T$ and 
$\mu^+ : T^+ \to X^+$. By construction we have

$$
H^0( T^+, (\mu^+)^*(\sM^+) )\neq 0.
$$
Finally, we define the two compositions 
$$
h : = f \circ \mu   \; \; \; \; \; \; \; \; \text{and} \; \;\; \;\; \;\; \;\;  h^+ : = f^+ \circ \mu^+.
$$

$$
\xymatrix{
T^+ \ar@/_5mm/[ddr]_{h^+}  \ar[rrrr]^{g''}  \ar[dr]_{\mu^+} &&&&  T \ar[dl]^{\mu} \ar@/^5mm/[ddl]^h\\
& X^+ \ar[d]_{f^+} \ar[rr]^{g'}  &&  X \ar[d]^f \\
& Y^+ \ar[rr]^g  &&   Y
}
$$
We will assume that $\Delta_f$, $\Delta_h$, $\Delta_{f^+}$ and $\Delta_{h^+}$ have simple normal crossing support
(see Notation~\ref{notation1}).

\medskip

\subsection{Hodge theoretic setup}\label{HodgeSetUp}
In the setting of~\ref{GeomSetUp}, after removing subsets of $\codim\geq 2$ from the base, we may assume 
that $D_f$ and $D_h$ also have simple 
normal crossing support.

Let $(\sE= \bigoplus \sE_{i}, \theta)$ be a logarithmic system of Hodge bundles 
underlying the Deligne canonical extension of $\R^n h_* {\underline{\bC}\;}|_{T\setminus \Delta_h}$
(with the fixed interval $[0,1)$). For every $0 \leq p \leq n$, and after removing a closed subset of $Y$ along $D_h$ of 
$\codim_{Y}\geq 2$, let 

\begin{equation*}
\big(  \Omega^{\bullet}_T  (\log \Delta_h)  , F_{T, \bullet}  \big)
 \end{equation*}
be the filtered logarithmic de Rham complex with the decreasing locally free filtration $F_{T, \bullet}$,
with locally free gradings, induced by the 
exact sequence 
\begin{equation}\label{ExactSeq}
0 \longrightarrow  h^* \Omega^1_Y(\log D_h)  \longrightarrow \Omega^1_T (\log \Delta_h) 
\longrightarrow \Omega^1_{T/Y}(\log \Delta_h)
\longrightarrow 0.
\end{equation}

Let $C^p_T$ denote the complex corresponding to $\Omega^p_T(\log \Delta_h)$ defined by the short exact sequence 
$$
0 \longrightarrow  h^* \Omega^1_Y(\log D_h) \otimes \Omega^{p-1}_{T/Y}(\log \Delta_h)  \longrightarrow 
  \frac{\Omega^p_T (\log \Delta_h)}{F^2_{T,p}} \longrightarrow \Omega^p_{T/Y}(\log \Delta_h)
\longrightarrow 0,
$$
given by quotienting out the short exact sequence $0 \to F^1_{T,p} \to F^0_{T,p} \to \Omega^p_{T/Y} (\log \Delta_h)\to 0$
by $F^2_{T,p}$. Thanks to Steenbrink~\cite{Ste76} and Katz-Oda~\cite{KO68} we know there is an 
isomorphism of systems of Hodge bundles
$$
(\sE, \theta) \cong \bigoplus \R^i h_* \Omega^{n-i}_{T/Y}  ( \log \Delta_h),
$$
with the Higgs field of the system on the right defined by the long exact cohomology sequence 
associated to $\dR h_* C^p_T$ 
(which is a distinguished triangle in the bounded derived category of coherent sheaves).

\begin{definition}
Let $\sW$ be an $\sO_Y$-module on a regular scheme $Y$. Then, a $\sW$-valued system 
$(\sF, \tau)$ consists of an $\sO_Y$-module $\sF$ and a sheaf homomorphism $\tau: \sF\to \sW \otimes \sF$ 
that is Griffiths-transversal with respect to an $\sO_Y$-module splitting $\sF= \bigoplus \sF_i$, i.e. 
$\tau: \sF_i \to \sW \otimes \sF_{i+1}$. 
\end{definition}
In particular, when $\sW=\Omega^1_Y$ and $\tau$ is integrable, 
$(\sF, \tau)$ is the usual system of Hodge sheaves.

Following the general strategy of~\cite{Vie-Zuo03a} as we have seen in~\cite{Taj18} we can construct 
a $\Omega^1_Y(\log D_f)$-valued system $(\sF, \tau)$. 
Furthermore, if the assumption 
(\ref{eq:extra}) holds, then there is a map of systems 
$$
\Phi: (\sF, \tau) \longrightarrow  (\sE, \theta).
$$
For the reader's convenience we briefly recall the construction of~\cite[Subsect.~2.2]{Taj18}.
First note that, similar to the construction of $C^p_T$, we can construct $C^p_X$ and consider 
the twisted short exact sequence $C^p_X\otimes \sM^{-1}$.

\begin{proposition}\label{FilteredMap}
In the above setting, over the flat locus of $f$ and $h$, for every $0\leq p \leq \dim(X/Y)$, there is a filtered morphism 

\begin{equation}\label{eq:FILT1}
\mu^* \big(  (\Omega^p_X (\log \Delta_f)  , F^j_{X,p}  ) \otimes \sM^{-1} \big)   \longrightarrow  
  \big(  \Omega^p_T(\log \Delta_h)  , F^j_{T,p}  \big).
\end{equation}
Consequently, there is a morphism of short exact sequences $\mu^*(C^p_X \otimes \sM^{-1}) \to C^p_T$.
\end{proposition}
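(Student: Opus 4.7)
The plan is to produce the filtered morphism (\ref{eq:FILT1}) in three steps, and then deduce the statement about $C^p_\bullet$ as a formal consequence. The three steps are: (a) construct the underlying pullback on logarithmic differentials; (b) absorb the twist by $\sM^{-1}$ using the global section provided by (\ref{eq:extra}); and (c) verify compatibility with the filtrations $F^\bullet_{X,p}$ and $F^\bullet_{T,p}$.

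For step (a), first note that $D_f \subseteq D_h$ holds by construction: since $dh_t = df_{\mu(t)}\circ d\mu_t$ factors through $df_{\mu(t)}$, any failure of smoothness of $f$ at $x \in X$ forces failure of smoothness of $h$ at every $t\in \mu^{-1}(x)$, whence $\disc(f)\subseteq \disc(h)$. Combined with the maximality defining $\Delta_h$, this yields $\mu^{-1}(\Delta_f)_{\mathrm{red}} \subseteq \Delta_h$ over the flat locus of $f$ and $h$. The universal property of logarithmic K\"ahler differentials then provides a natural morphism
$$
d\mu \colon \mu^*\Omega^1_X(\log \Delta_f) \longrightarrow \Omega^1_T(\log \Delta_h),
$$
whose $p$-th exterior power gives $\wedge^p d\mu \colon \mu^*\Omega^p_X(\log\Delta_f)\to \Omega^p_T(\log\Delta_h)$.

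For step (b), the hypothesis $H^0(T,\mu^*\sM)\neq 0$ provides a nonzero section $s\colon \sO_T\hookrightarrow \mu^*\sM$, which, after tensoring with $\mu^*\sM^{-1}$, becomes a morphism $\sigma\colon \mu^*\sM^{-1}\to \sO_T$ (concretely, the ideal sheaf of the vanishing locus of $s$). Combining $\wedge^p d\mu$ with $\mathrm{id}\otimes \sigma$ produces
$$
\mu^*\bigl(\Omega^p_X(\log\Delta_f)\otimes \sM^{-1}\bigr) \longrightarrow \Omega^p_T(\log\Delta_h).
$$

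For step (c), recall that $F^j_{X,p}$ is locally generated by wedge products with at least $j$ factors from $f^*\Omega^1_Y(\log D_f)$, and similarly for $F^j_{T,p}$ with respect to $h^*\Omega^1_Y(\log D_h)$; here one uses that, over the flat locus, the short exact sequence (\ref{ExactSeq}) splits into locally free pieces so that the filtration is well-defined in the expected way. Since $\mu^*f^* = h^*$, and $D_f\subseteq D_h$ induces $\Omega^1_Y(\log D_f)\hookrightarrow \Omega^1_Y(\log D_h)$, the map $\wedge^p d\mu$ sends $\mu^*F^j_{X,p}$ into $F^j_{T,p}$. Tensoring with the line bundle $\sM^{-1}$ via $\sigma$ is grading-preserving, so the map of step (b) is genuinely filtered, establishing (\ref{eq:FILT1}). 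The morphism of short exact sequences $\mu^*(C^p_X\otimes\sM^{-1}) \to C^p_T$ then follows at once, since both sides arise as the sub-quotient exact sequence $0 \to F^1/F^2 \to F^0/F^2 \to F^0/F^1 \to 0$ on $X$ and $T$ respectively, and the filtered morphism (\ref{eq:FILT1}) descends term-by-term.

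The main technical point I anticipate is step (c), specifically ensuring that the wedge-product description of $F^\bullet$ is compatible with $\mu^*$ in the presence of both $\sM^{-1}$ and the logarithmic poles. Step (a) is essentially automatic once one observes the inclusion of discriminants, and step (b) is formal; by contrast, checking that $\wedge^p d\mu$ genuinely respects the level of the filtration requires the flat-locus hypothesis and a little bookkeeping with the natural inclusion $\Omega^1_Y(\log D_f)\hookrightarrow \Omega^1_Y(\log D_h)$. Once this compatibility is verified, nothing more than naturality of $d$, of wedge products, and of filtrations induced by short exact sequences of locally free sheaves is needed.
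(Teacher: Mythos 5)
Your proposal is correct and takes essentially the same approach as the paper's proof: both rely on the pullback morphism of the Koszul-filtered log de Rham sequences (the paper invokes this as ``the pullback ... is a subsequence of (\ref{ExactSeq})'' and cites Hartshorne Ex.~5.16(c) for the filtered compatibility), tensor with the map $\mu^*\sM^{-1}\hookrightarrow\sO_T$ coming from (\ref{eq:extra}), and then pass to the subquotient $F^0/F^2$ to obtain $\mu^*(C^p_X\otimes\sM^{-1})\to C^p_T$ (the paper via the nine lemma, you via the observation that the filtered map descends term-by-term, which is legitimate since the gradings are locally free). You also spell out the discriminant inclusion $D_f\subseteq D_h$ underlying the morphism of sequences, a detail the paper takes for granted.
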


\begin{proof}
First, we note that the pullback of short exact sequence of locally free sheaves 

$$
0 \longrightarrow    f^* \Omega^1_Y(\log D_f) \longrightarrow \Omega^1_X(\log \Delta_f) \longrightarrow
   \Omega^1_{X/Y} (\log \Delta_f)  \longrightarrow 0 
$$
via $\mu$ is a subsequence of (\ref{ExactSeq}).
Therefore, by the construction of the two filtrations $F_{X,p}$ and $F_{T,p}$, cf.~\cite[Ex.~5.16(c)]{Ha77}, we have 
a filtered morphism 

\begin{equation}\label{eq:FILT2}
\mu^* \big( \Omega_X^p (\log \Delta_f)   , F^j_{X,p}  \big)    \longrightarrow   ( \Omega^p_T (\log \Delta_h) , F^j_{T,p} ) .
\end{equation}
in particular, for $j=1,2$, we have 

\begin{equation}\label{eq:22}
\mu^* F^j_{X,p}  \longrightarrow   F^j_{T,p}
\end{equation}
 with the following commutative diagram 
 
 \begin{equation}\label{eq:11}
 \xymatrix{
0   \ar[r]  &  \mu^*F^1_{X,p}  \ar[r] \ar[d] &  \mu^*F^0_{X,p}
     \ar[r] \ar[d]  &  \mu^*\Omega^p_{X/Y} (\log \Delta_f)  \ar[r] \ar[d]  &    0   \\  
0   \ar[r]    &  F^1_{T,p}  \ar[r]   &  F^0_{T,p}  \ar[r]   &  \Omega^p_{T/Y} (\log \Delta_h) \ar[r]   &   0  . 
}
 \end{equation}
 Now, consider the commutative diagram 
\begin{equation}\label{eq:33}
\xymatrix{
0   \ar[r]  &  F^2_{X,P}  \ar[r] \ar[d] &  F^2_{X/P}  \ar[r] \ar[d]  &  0  \ar[r] \ar[d]  &    0   \\  
0   \ar[r]    &  F^1_{X,p}  \ar[r]   &  F^0_{X,p}  \ar[r]   &  \Omega^p_{X/Y} (\log \Delta_f) \ar[r]   &   0  . 
}
\end{equation}
By the nine lemma,  the diagram (\ref{eq:33}) induces 
$$
0 \longrightarrow   F^1_{X,p} / F^2_{X,p}  \longrightarrow F^0_{X,p} / F^2_{X,p}  
 \longrightarrow \Omega^p_{X/Y} (\log \Delta_f) \longrightarrow 0, 
$$
which we have denoted by $C^p_X$.  By combining (\ref{eq:11}) and (\ref{eq:22}) and the 
functoriality of the nine lemma (in the abelian category of coherent sheaves), 
after pulling back (\ref{eq:33}) by $\mu$ we find 
the morphism 
\begin{equation}\label{eq:STAR}
\mu^* C^p_X  \longrightarrow C^p_T.
\end{equation}

Furthermore, by the assumption (\ref{eq:extra}) we have the 
natural injection $\mu^*\sM^{-1} \hooklongrightarrow \sO_T$.
This implies that there is a filtered injection 
$$
\mu^* \big(  (  \Omega^p_{X/Y} (\log \Delta_f)  , F_{X,p} )   \otimes \sM^{-1}   \big)   
     \longrightarrow  \mu^*( \Omega^p_{X/Y} (\log \Delta_f), F_{X,p} )
$$
which, together with (\ref{eq:FILT2}), establishes (\ref{eq:FILT1}). 
Moreover, after twisting (\ref{eq:33}) by $\sM^{-1}$, again by the nine lemma (and its functoriality) 
we have 
$$
\mu^* \big(C^p_X \otimes \sM^{-1}\big)   \longrightarrow \mu^* C^p_X  .
$$
The proposition now follows from the composition of this latter morphism with (\ref{eq:STAR}).
\end{proof}

Now, let $(\sF,\tau)$
be the system defined by 
$$
\sF_i : = \R^i f_*  \big( \Omega^{n-i}_{X/Y} (\log \Delta_f) \otimes \sM^{-1}  \big),
$$
with each $\tau|_{\sF_i}$ given by the connecting maps in the cohomology sequence 
associated to $\dR f_* ( C^p_X \otimes \sM^{-1} )$. 
By applying $\dR \mu_*$ to the map $\mu^*\big(C^p_X \otimes \sM^{-1}\big) \to C^p_T$
in Proposition~\ref{FilteredMap} we have 
$$
\dR \mu_* \mu^*(C^p_X\otimes \sM^{-1} )  \longrightarrow  \dR \mu_* C^p_T .
$$
Using the (derived) projection formula and the adjunction map $\sO_X \to \dR \mu_* \sO_T$ we thus get 
$$
C^p_X\otimes \sM^{-1} \longrightarrow \dR\mu_* C^p_T,
$$
and consequently the morphism
\begin{equation}\label{eq:KeyMap}
\dR f_*  ( C^p_X \otimes \sM^{-1} ) \longrightarrow  \dR h_*  C^p_T.
\end{equation} 

\medskip

\begin{proposition}[\protect{cf.~\cite[Subsect.~2.2]{Taj18}}]
\label{prop:TajMap}
The morphism (\ref{eq:KeyMap}) induces the commutative diagram

$$
\xymatrix{
\R^{n-p} f_*  \big( \Omega^p_{X/Y} (\log \Delta_f) \otimes \sM^{-1} \big)   \ar[r]^(0.4){\tau} \ar[d]_{\Phi_{n-p}}  &  
    \Omega^1_Y ( \log D_f ) \otimes  \R^{n-p+1} f_* \big(\Omega^{p-1}_{X/Y} (\log \Delta_f) \otimes \sM^{-1} \big) \ar[d]^{i \otimes \Phi_{n-p+1}}  \\
\sE_{n-p}  \ar[r]^(0.4){\theta}   &  \Omega^1_Y(\log D_h) \otimes \sE_{n-p+1},
}
$$
where $i$ is the natural inclusion map. 
The vertical maps on the left define $\Phi: (\sF, \tau)\to (\sE,\theta)$ by $\Phi = \bigoplus \Phi_i$.
Furthermore, $\Phi_0$ is injective.

\end{proposition}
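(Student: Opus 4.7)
The plan is to use naturality of the long exact cohomology sequence under the morphism of distinguished triangles induced by (\ref{eq:KeyMap}). To begin, note that the short exact sequence defining $C^p_X \otimes \sM^{-1}$, after applying $\dR f_*$ and using the projection formula (valid since $\Omega^1_Y(\log D_f)$ is locally free), produces a distinguished triangle in $D^b(Y)$; its long exact cohomology sequence has, by the Steenbrink--Katz--Oda description invoked in \parref{HodgeSetUp}, its degree $(n-p)$ connecting homomorphism equal to $\tau|_{\sF_{n-p}}\colon \sF_{n-p} \to \Omega^1_Y(\log D_f) \otimes \sF_{n-p+1}$. The same construction applied to $C^p_T$ over $h\colon T \to Y$ yields the corresponding connecting map $\theta|_{\sE_{n-p}}\colon \sE_{n-p} \to \Omega^1_Y(\log D_h) \otimes \sE_{n-p+1}$.

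Next, the morphism of short exact sequences $\mu^*(C^p_X \otimes \sM^{-1}) \to C^p_T$ from Proposition~\ref{FilteredMap}, combined with the chain of manipulations immediately preceding this proposition (applying $\dR \mu_*$, invoking the projection formula and the adjunction $\sO_X \to \dR \mu_* \sO_T$, then applying $\dR f_*$), upgrades (\ref{eq:KeyMap}) to a \emph{morphism of distinguished triangles} in $D^b(Y)$. The induced ladder between the two long exact cohomology sequences contains the required commutative square; its vertical arrows are by definition $\Phi_{n-p}$ and $\Phi_{n-p+1}$, while the inclusion $i\colon \Omega^1_Y(\log D_f) \hookrightarrow \Omega^1_Y(\log D_h)$ comes from the inclusion of log-pole divisors (enlarging $D_f$ to $D_f\cup D_h$ if necessary, which is harmless for the construction of $(\sF,\tau)$). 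Letting $p$ range over $0,\dots,n$ assembles the $\Phi_{n-p}$ into the desired morphism of systems $\Phi\colon (\sF, \tau) \to (\sE, \theta)$.

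For the injectivity of $\Phi_0$, unwinding the construction shows that $\Phi_0$ is $f_*$ applied to the sheaf map $\Omega^n_{X/Y}(\log \Delta_f) \otimes \sM^{-1} \to \mu_* \Omega^n_{T/Y}(\log \Delta_h)$; by the proof of Proposition~\ref{FilteredMap}, this factors as the injection $\mu^*\sM^{-1} \hookrightarrow \sO_T$ (provided by $H^0(T, \mu^*\sM) \neq 0$) composed with the natural differential map $\mu^*\Omega^n_{X/Y}(\log \Delta_f) \to \Omega^n_{T/Y}(\log \Delta_h)$. The latter is nonzero on each generic stalk over the flat locus of $f$ by generic smoothness of $\mu$ in characteristic zero, and since its source is invertible (hence torsion-free) it is injective. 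Pushing forward under the left-exact functors $\mu_*$ and $f_*$ preserves injectivity, yielding the claim. The main point that needs care is to verify that the natural map of logarithmic relative differentials is generically injective in the presence of the log structures $\Delta_f$ and $\Delta_h$; this ought to reduce to its non-logarithmic counterpart via the residue sequences already employed in the proof of Proposition~\ref{FilteredMap}.
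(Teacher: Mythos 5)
Your proof takes the approach implicit in the paper's own construction of (\ref{eq:KeyMap}) --- the paper itself defers to \cite[Subsect.~2.2]{Taj18} for the proof --- namely promoting (\ref{eq:KeyMap}) to a morphism of distinguished triangles in $D^b(Y)$ and reading the required commutative square off the induced morphism of long exact cohomology sequences; this is correct and visibly the argument the author has in mind.

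Two small points should be tightened. The phrase ``generic smoothness of $\mu$'' is misleading: what you actually use is that $\mu$ is \emph{generically finite and unramified} (true for the desingularized cyclic cover underlying (\ref{eq:extra})), so that $\mu^*\Omega^n_{X/Y}(\log\Delta_f)\to\Omega^n_{T/Y}(\log\Delta_h)$ is an isomorphism on a dense open subset away from $\Delta_f$, $\Delta_h$ and the ramification locus; the logarithmic poles, being supported on divisors, are harmless for a generic assertion, so your final cautionary remark resolves itself. Further, the step ``pushing forward under the left-exact functors $\mu_*$ and $f_*$'' silently invokes injectivity of the adjunction unit $\sF\to\mu_*\mu^*\sF$ for the torsion-free sheaf $\sF=\Omega^n_{X/Y}(\log\Delta_f)\otimes\sM^{-1}$ and dominant $\mu$ --- immediate on generic stalks, but worth stating, since $\Phi_0$ is $f_*$ of the \emph{composite} $\sF\to\mu_*\mu^*\sF\to\mu_*\Omega^n_{T/Y}(\log\Delta_h)$ rather than of the $\mu_*$-pushforward alone. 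Finally, the containment $D_f\subseteq D_h$ that makes $i$ a genuine inclusion should be recorded as part of the geometric setup (it holds for the cyclic cover construction) rather than obtained by ``enlarging $D_f$,'' which would alter the definition of $(\sF,\tau)$.
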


\smallskip

We can replicate these construction for $f^+:X^+\to Y^+$. 
That is, assuming that $D_{f^+}$ and $D_{h^+}$ have simple normal crossing support and 
after removing a closed subscheme of $Y^+$ along $D_{f^+}$ of $\codim_{Y^+}\geq 2$ (if necessary), 
we can define two systems $(\sF^+, \tau^+)$, $(\sE^+, \theta^+)$ whose graded pieces are given by
$$
\sF^+_i  = \R^i f_*^+  \bigl(  \Omega^{n-i}_{X^+/Y^+} (\log \Delta_{f^+}) \otimes (\sM^+)^{-1}  \bigr) \; \; \; , \; \; \;  
\sE_i^+ =  \R^i h^+_*  \big( \Omega^{n-i}_{T^+/Y^+}  (\log \Delta_{h^+})  \big).
$$
Similarly we can also define a morphism of systems $\Phi^+: (\sF^+, \tau^+) \to (\sE^+, \theta^+)$ on $Y^+$ 

\medskip
\subsection{Functoriality.~I}\label{subsect:functorial}
In the setting of Subsection~\ref{GeomSetUp}, let $X' : = X\times_Y Y^+$
and $\pi: X^+ \to X'$ be the strong resolution defining $g'$ as the composition $\sigma \circ \pi$:
$$
\xymatrix{
X^+ \ar@/^5mm/[rrr]^{g'} \ar[r]^{\pi}  \ar[dr]_{f^+} &  X'  \ar[rr]^{\sigma} \ar[d]^{f'} &&  X \ar[d]^{f} \\  
&                        Y^+  \ar[rr]^g     && Y.
}
$$

\begin{lemma}\label{lem:pullback}
There is a natural morphism 
$$
g^* \dR f_* C^p_X   \longrightarrow \dR f_*^+ \big(  (g')^* C^p_X\big).
$$
Moreover, for any line bundle $\sM$ on $X$, we similarly have a morphism 
$$
g^* \dR f_* ( C^p_X \otimes \sM^{-1} )  \longrightarrow  \dR f^+_* \big(  (g')^*  (C^p_X \otimes \sM^{-1})   \big)  .
$$
\end{lemma}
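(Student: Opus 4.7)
The plan is to realise the desired morphism as the composition of the base-change arrow for the Cartesian square $X' = X \times_Y Y^{+}$ followed by the unit of the adjunction $\pi^{*} \dashv \dR\pi_{*}$ for the strong resolution $\pi : X^{+} \to X'$.

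Concretely, for any object $\sG$ in the bounded derived category of coherent sheaves on $X$, the canonical morphism
$$
\dR f_{*} \sG \longrightarrow \dR f_{*} \dR \sigma_{*} \sigma^{*} \sG = \dR g_{*} \dR f'_{*} \sigma^{*} \sG,
$$
(constructed from the unit $\sG \to \dR\sigma_{*}\sigma^{*}\sG$ together with the equality $f \circ \sigma = g \circ f'$) is adjoint under $g^{*} \dashv \dR g_{*}$ to a natural base-change arrow $g^{*} \dR f_{*} \sG \to \dR f'_{*} \sigma^{*} \sG$. Applying $\dR f'_{*}$ to the unit $\sigma^{*} \sG \to \dR \pi_{*} \pi^{*} \sigma^{*} \sG$ and invoking $\pi^{*} \sigma^{*} = (g')^{*}$ together with $\dR f'_{*} \dR \pi_{*} = \dR f^{+}_{*}$ (since $f^{+} = f' \circ \pi$) yields a further arrow $\dR f'_{*} \sigma^{*} \sG \to \dR f^{+}_{*} (g')^{*} \sG$. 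Composing and specialising $\sG$ to $C^{p}_{X}$, respectively $C^{p}_{X} \otimes \sM^{-1}$, produces both morphisms of the statement; the twist by $\sM^{-1}$ commutes with $(g')^{*}$, so the target in the twisted case reads as asserted.

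Since the whole construction is a formal manipulation of adjunction units and base change, I do not foresee a serious obstacle. The only point to verify is that $C^{p}_{X}$—viewed either as a two-term complex or as the short exact sequence of locally free sheaves on the flat locus of $f$ introduced in the proof of \propositionref{FilteredMap}—behaves well under $\sigma^{*}$ and $\pi^{*}$. This is immediate from the local freeness of its entries: the pullbacks remain exact, so the base-change and adjunction morphisms assemble into genuine maps of distinguished triangles, and the stated arrows appear as the corresponding entries.
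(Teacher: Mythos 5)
Your argument is correct and is essentially the paper's argument: the paper likewise constructs the map by combining a base-change arrow for the Cartesian square $X' = X \times_Y Y^+$ with the adjunction morphism attached to the resolution $\pi : X^+ \to X'$ (there phrased via the projection formula $\dR\pi_*\sO_{X^+} \otimes \sigma^* C^p_X \cong \dR\pi_*\bigl((g')^*C^p_X\bigr)$ together with $\sO_{X'} \to \dR\pi_*\sO_{X^+}$, which is the same data as your unit $\sigma^*C^p_X \to \dR\pi_*\pi^*\sigma^*C^p_X$). The one genuine difference is how the first arrow is obtained: the paper invokes derived \emph{flat} base change, $g^*\dR f_* C^p_X \cong \dR f'_*(\sigma^* C^p_X)$, citing flatness of $g$ — an assumption that the paper only formally introduces \emph{after} the lemma — whereas you construct the base-change morphism $g^*\dR f_*\sG \to \dR f'_*\sigma^*\sG$ directly as the adjoint of $\dR f_*\sG \to \dR g_*\dR f'_*\sigma^*\sG$. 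Your version requires no flatness hypothesis and thus matches the literal statement of the lemma more cleanly; the paper's version additionally tells you that this arrow is an isomorphism when $g$ is flat, which is used later in Proposition~\ref{prop:functorial}. Either way the lemma is proved, and your remark about local freeness of the terms of $C^p_X$ ensuring exactness of pullback is exactly the point to keep in mind.
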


\begin{proof}
By (derived) projection formula, and the fact that $C^p_X$ is locally free, we have 
$$
 \dR \pi_* \sO_{X^+} \otimes \sigma^*C^p_X  \cong  \dR \pi_*( \pi^*\sigma^* C^p_X )  =  \dR \pi_* \big( (g')^*  C^p_X  \big). 
$$
Together with the natural map (adjunction) $\sO_{X'} \to \dR \pi_* \sO_{X^+}$ we thus find 

\begin{equation}\label{eq:ADJU}
\sigma^* C^p_X  \longrightarrow  \dR \pi_*  \big( (g')^* C^p_X  \big)  .
\end{equation} 
By applying $\dR f'_*$ to (\ref{eq:ADJU}) we then get 
$$
\dR f_*'  ( \sigma^* C^p_X ) \longrightarrow  \dR f_*^+  \big(  (g')^* C^p_X \big).
$$
On the other hand, by (derived) base change, and flatness of $g$, we have $\dR f'_* (\sigma^*C^p_X) 
\cong  g^* ( \dR f_* C^p_X )$.

The second assertion in the proposition follows from an identical argument. 
\end{proof}

\begin{assumption} 
From now on we will make the extra assumption that the morphism $g$ is flat.
\end{assumption}

\begin{proposition}\label{prop:functorial}
With Assumption~(\ref{eq:extra}), in the setting of~\ref{HodgeSetUp}, there is a commutative diagram of morphisms of systems 
$$
\xymatrix{
g^*  (\sF, \tau)  \ar[rr]^{g^*\Phi} \ar[d]  &&  g^* (\sE, \theta) \ar[d]  \\
(\sF^+, \tau^+)  \ar[rr]^{\Phi^+}  &&  (\sE^+, \theta^+), 
}
$$
which is an isomorphism over $Y^+\setminus D_{f^+}$ for the  
vertical map on the left. Furthermore, the vertical map on the right is an injection over $Y^+$.
\end{proposition}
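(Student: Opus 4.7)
The plan is to build both vertical maps by invoking Lemma~\ref{lem:pullback} (and an entirely parallel version of it) and then to read off commutativity, the isomorphism on the left, and the injection on the right from the construction. For the left vertical map, I apply the second assertion of Lemma~\ref{lem:pullback} to the line bundle $\sM$, producing a natural morphism
$$
g^* \dR f_* ( C^p_X \otimes \sM^{-1}) \longrightarrow \dR f^+_*\bigl( (g')^*(C^p_X \otimes \sM^{-1})\bigr).
$$
Since $(g')^*\sM = \sM^+$ by the definition of $\sM^+$ in Subsection~\ref{GeomSetUp}, and since the argument of Proposition~\ref{FilteredMap}---applied to the morphism of smooth log pairs $g':(X^+,\Delta_{f^+})\to(X,\Delta_f)$ instead of $\mu$---supplies a natural filtered map $(g')^*C^p_X \to C^p_{X^+}$, I obtain a morphism of complexes terminating in $\dR f^+_*(C^p_{X^+} \otimes (\sM^+)^{-1})$. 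Passing to the long exact sequences in cohomology then yields the left vertical map of systems, together with its compatibility with $\tau$ and $\tau^+$. The right vertical map is constructed identically, replacing $(f,X,\sM)$ by $(h,T,\sO_T)$ and $(g',\pi,\sigma)$ by the analogous triple built from $h^+ : T^+ \to Y^+$.

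Commutativity of the square with $\Phi$ and $\Phi^+$ is then a formal consequence of the fact that $\Phi$ itself was built from (\ref{eq:KeyMap}) through a chain of functorial operations: the filtered pullback of Proposition~\ref{FilteredMap}, the nine-lemma construction of $C^p_X$, the twist by $\sM^{-1}$, the adjunction $\sO_X \to \dR\mu_*\sO_T$, the projection formula, and the long exact cohomology sequence. Each of these steps is compatible with pullback under $g$, $g'$ and $g''$, so assembling them gives a commutative cube of derived-category morphisms whose two commuting square faces encode $\Phi$ and $\Phi^+$; passing to cohomology produces the displayed diagram.

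To prove that the left vertical map is an isomorphism over $Y^+\setminus D_{f^+}$, note that $\pi: X^+ \to X'$ is an isomorphism over this open locus and the vertical log poles along $\Delta_{f^+}$ are trivial there. Since $g$ is flat, flat base change identifies
$$
g^*\R^i f_*\bigl(\Omega^{n-i}_{X/Y}(\log\Delta_f)\otimes\sM^{-1}\bigr) \cong \R^i f'_*\bigl(\sigma^*(\Omega^{n-i}_{X/Y}(\log\Delta_f)\otimes\sM^{-1})\bigr),
$$
and the right-hand side agrees with $\R^i f^+_*(\Omega^{n-i}_{X^+/Y^+}(\log\Delta_{f^+})\otimes(\sM^+)^{-1})$ via the isomorphism $\pi$ on the relevant open locus; the same base-change identification is compatible with the Higgs fields. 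For the right vertical map the same argument applied over $Y^+\setminus D_{h^+}$ produces an isomorphism on that dense open. Since each $\sE_i = \R^i h_*\Omega^{n-i}_{T/Y}(\log\Delta_h)$ is locally free (it is a Hodge bundle of the Deligne canonical extension), its pullback $g^*\sE_i$ is also locally free and therefore torsion-free, so any homomorphism out of it whose kernel vanishes on a dense open must be injective on all of $Y^+$.

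The main technical obstacle I expect is the commutativity step: although each of the functorial operations entering the construction of $\Phi$ is routine individually, one must track filtrations $F^j_{X,p}$ and $F^j_{T,p}$, twists by $\sM^{-1}$, applications of the nine-lemma, and derived base changes simultaneously, so that the Higgs fields---which appear as connecting homomorphisms in long exact sequences---are seen to be preserved. This amounts to a long but essentially formal diagram chase, and the bookkeeping around the filtered pullback $(g')^*C^p_X \to C^p_{X^+}$ is the place where care is most needed.
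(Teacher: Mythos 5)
Your construction of the diagram and its commutativity matches the paper's: both vertical maps are obtained by the chain of operations in Lemma~\ref{lem:pullback} (pullback of the short exact sequences $C^p_X\otimes\sM^{-1}$ and $C^p_T$, adjunction $\sO\to\dR\pi_*\sO$, projection formula, flat base change, long exact sequence), and the commutativity is read off from the functoriality of all these steps exactly as in the paper's sequence of derived-category squares. The flat base change argument for the left vertical map being an isomorphism over $Y^+\setminus D_{f^+}$ is also the paper's argument.

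Where you diverge is in the proof that the right vertical map $g^*(\sE,\theta)\to(\sE^+,\theta^+)$ is injective on all of $Y^+$. The paper introduces an auxiliary strong resolution $T'$ of $X^+\times_X T$ dominated by $T^+$, constructs the base-change map $g^*(\sE,\theta)\to(\sE',\theta')$, appeals to the functoriality of Deligne canonical extensions (pullback of the lower extension embeds into the lower extension of the pullback) to get injectivity across $D_{h'}$, and then identifies $(\sE^+,\theta^+)\cong(\sE',\theta')$ via Deligne and Esnault--Viehweg's birational invariance of logarithmic Hodge bundles. You bypass all of this with the elementary observation that $g^*\sE_i$ is locally free (hence torsion-free) because $\sE_i$ is locally free and $g$ is flat, so any sheaf map out of it that is generically an isomorphism has vanishing kernel. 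Your argument is correct and notably more economical: once one has the generic isomorphism, no properties of canonical extensions beyond local freeness of the Hodge pieces are needed. The one place where your argument and the paper's share the same burden is the generic isomorphism itself: on the dense open the pulled-back Hodge bundle must be identified with the Hodge bundle of the resolved pulled-back family, and this comparison is where the birational comparison of $T^+$ with the strong resolution of $T\times_X X^+$ (and implicitly the assumption that such resolutions are isomorphisms over the smooth locus of $Y^+$) enters in both proofs. It would be worth saying a sentence about this, since you only gesture at it with ``the same argument applied over $Y^+\setminus D_{h^+}$''.
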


\begin{proof}
This is a direct consequence of base change and the functoriality of the construction of 
the systems involved. To see this, we note that there is a commutative diagram 
$$
\xymatrix{
(\mu^+)^* (g')^* \big( C^p_X \otimes \sM^{-1} \big)  \ar[d] \ar[rr]  &&  (g'')^*  C^p_T \ar[d] \\
(\mu^+)^* \big(  C^p_{X^+} \otimes (\sM^+)^{-1} \big)   \ar[rr]  &&  C^p_{T^+},
}
$$
so that, after applying $\dR h^+_*$, by the projection formula we have 
$$
\xymatrix{
\dR f_*^+  \big(  (g')^* ( C_X^p \otimes \sM^{-1} )  \big)  \ar[rr]  \ar[d]  &&  \dR h_*^+ ( g'' )^* C^p_T \ar[d] \\
 \dR f^+_*  \big(  C^p_{X^+}  \otimes (\sM^+)^{-1}  \big)  \ar[rr]  &&   \dR h_*^+ C^p_{T^+}.
}
$$

On the other hand, by Lemma~\ref{lem:pullback} we have 
$$
\xymatrix{
g^*  \big( \dR f_* C^p_X \otimes \sM^{-1}  \big)  \ar[rr] \ar[d] &&  g^* \dR h_* C_T^p   \ar[d]  \\
\dR f_*^+  \big(  (g')^* ( C_X^p \otimes \sM^{-1} )  \big)  \ar[rr]   &&  \dR h_*^+ ( g'' )^* C^p_T .
}
$$
By combining these two last diagram we thus find 
$$
\xymatrix{
 g^* \dR f_* (C^p_X \otimes \sM^{-1})   \ar[rr]  \ar[d]  &&  g^* \dR h_* C^p_T   \ar[d] \\
 \dR f^+_*  \big(  C^p_{X^+}  \otimes (\sM^+)^{-1}  \big)  \ar[rr]  &&   \dR h_*^+ C^p_{T^+}.
}
$$

Existence of the map $g^*(\sF, \tau) \to (\sF^+, \tau^+)$, and its compatibility with 
$g^*(\sE, \theta)\to (\sE^+, \theta^+)$, now follows from the associated long exact 
cohomology sequences and flatness of $g$:
$$
\xymatrix{
 g^*\R^if_* (C^p_X \otimes \sM^{-1})    \ar[rr]  \ar[d]  &&  g^* \R^ih_* C^p_T    \ar[d] \\
   \R^i f^+_*  \big( C^p_X \otimes (\sM^+)^{-1}  \big)   \ar[rr]  &&   \R^i h^+_* C^p_{T^+}  .
}
$$
Furthermore, the assumption that $g$ is flat implies that $g^*(\sF, \tau) \to (\sF^+, \tau^+)$ is an isomorphism 
over $Y^+ \setminus D_{f^+}$. 

Now, let $T'$ be a strong desingularization of $( X^+ \times_X T )$ such that there
is a surjective birational map $\sigma: T^+ \to T'$. Set $h': T' \to Y^+$ to be the induced family and
let $(\sE', \theta')$ be the Hodge bundle for 
the canonical extension of the VHS underlying $h'$. Then,
again by base change, we know that there is a morphism 
\begin{equation}\label{Map}
 g^*(\sE, \theta) \longrightarrow (\sE', \theta'),
  \end{equation}
which is an isomorphism over $Y^+\setminus D_{h'}$. 
The injectivity of (\ref{Map}) across $D_{h'}$ follows from the definition (or functoriality) of 
canonical extensions. 

On the other hand, thanks to Deligne~\cite{DeligneHodgeII} and Esnault-Viehweg~\cite[Lem.~1.5]{revI}, we know that 
$\dR \sigma_*  \Omega^p_{T^+/Y^+} (\log \Delta_{h^+}) \cong \Omega^p_{T'/Y^+}(\log \Delta_{h'})$
(see \cite[4.1.2]{VZ02} for the proof in the relative form).
Therefore, $(\sE^+ , \theta^+) \cong (\sE', \theta')$ which induces the required injection. 
\end{proof}

In the setting of Proposition~\ref{prop:functorial}, let $(\sG, \theta)$ and $(\sG^+, \theta^+)$ be, respectively, 
the image of $(\sF, \tau)$ and $(\sF^+, \tau^+)$ under $\Phi$ and $\Phi^+$.
In particular, for each $i$, we have 
$$
\theta(\sG_i) \subset \Omega^1_Y(\log D_f) \otimes  \sG_{i+1}  \; \;  \; \;
\text{and}  \; \; \; \;
\theta^+(\sG_i^+) \subset \Omega^1_{Y^+}( \log D_{f^+} ) \otimes \sG_{i+1}^+ .
$$

Due to the birational nature of the problems considered in this article, in application, we will be able to delete codimenison two 
subschemes of $Y$ whose preimage under $g$ are also of $\codim_{Y^+}\geq 2$. Therefore, as $g$ is flat, we may assume that 
the torsion free system $(\sG, \theta)$ is locally free. On the other hand, after replacing $(\sG^+, \theta^+)$ by its reflexive hull, 
we may also assume that 
$(\sG^+, \theta^+)$ is reflexive. 

\begin{assumption}\label{assump:free}
The torsion free system $(\sG, \theta)$ is locally free and 
$(\sG^+, \theta^+)$ is reflexive.
 \end{assumption}
 

By Proposition~\ref{prop:functorial} we have a commutative 
diagram of systems 
\begin{equation}\label{eq:maps}
\xymatrix{
g^* (\sG,\theta)  \ar[rr] \ar[d] &&  g^*(\sE, \theta) \ar[d] \\
(\sG^+, \theta^+)  \ar[rr]  &&  (\sE^+, \theta^+),
}
\end{equation}
with all maps being injective over $Y^+$. Furthermore, the morphism 
$$
g^*(\sG, \theta )  \longrightarrow  (\sG^+, \theta^+)
$$
is an isomorphism over the $Y^+\setminus D_{f^+}$.

\medskip

We end this subsection with the following lemmas, which will be 
useful for application in Section~\ref{sect:Section4-MainThm}. 
We will be working in the context of the following setup.

\begin{set-up}\label{SetUpLem}
Let $f: X\to B$ and $f_Z: X_Z \to Z$ be flat projective morphism
with connected fibers of dimension $n$. 
Let $g: Z^+ \to Z$ and $\gamma: Z^+\to B$ be two surjective 
flat morphisms. 
Assume that $\gamma$ is finite. 
All varieties are assumed to be smooth. 
Set $X^+_Z$ and $X'$ to be a strong desingularization of 
the normalization of $X\times_Z Z^+$ and $X\times_B Z^+$, respectively, 
with the naturally induced surjective morphisms 
$f^+ : X^+ \to Z^+$, $g' : X^+_Z \to X_Z$, $f': X'\to Z^+$ and 
$\gamma':X'\to X$. Assume that there is a birational map $X'\to X^+$ 
with $\pi': \wtilde X \to X'$ and $\pi^+: \wtilde X\to X^+$ removing its indeterminacy
and so that $\Delta_{\wtilde f}$ is snc, where $\wtilde f: \wtilde X\to Z^+$ is the induced morphism.
By construction we have $D_{f'}= \Supp(\gamma^*D_f)$ and $D_{f^+} = \Supp(g^*D_{f_Z})$.

Given a line bundle $\sA_Z$ on $Z$, define $\sA_{Z^+}:= g^*\sA_Z$.
Furthermore, set 
$$
\sM: = \Omega^n_{X_Z/Z} (\log \Delta_{f_Z}) \otimes f_Z^* (\sA_Z)^{-1} \; , \;  \sM^+ := (g')^*\sM 
\subseteq \Omega^n_{X^+/Z^+} (\log \Delta_{f^+}) \otimes (f^+)^*\sA_{Z^+}^{-1} ,
$$
$$
\sM': = \Omega^n_{X'/Z^+} (\log \Delta_{f'}) \otimes (f')^* (\sA_{Z^+})^{-1}  .
$$
Assuming that $0\neq s^+\in H^0((\sM^+)^m)$, let $\sigma^+: T^+\to X^+$ be a surjective morphism from a 
smooth quasi-projective variety $T^+$ associated to $s^+$, that is $h^0\big((\sigma^+)^* \sM^+\big) \neq 0$. 
Assume further that $\Delta_h$ is snc, where $h^+: = f^+\circ  \tau^+$.
Denote a strong desingularization of $\wtilde X \times_{X^+} T^+$ by $\wtilde T$.
 Set $\widetilde \sigma: \wtilde T \to \wtilde X$ to be the induced morphism and 
 assume that $\Delta_{\wtilde h}$ is snc, where $\wtilde h:= \tilde f \circ \wtilde \sigma$, 
all fitting in the commutative diagram:
$$
\xymatrix{
&&&& \wtilde T  \ar[dl]_{\wtilde \sigma} \ar[dr]  \\
&&& \wtilde X \ar[ld]_{\pi'} \ar[dr]^{\pi^+}  &&  T^+ \ar[dl]_{\sigma^+} \\
X  \ar[d]^f  &&  \ar[ll]_{\gamma'}  X'  \ar[dr]_{f'}    &&   \ar@{-->}[ll]^{\text{birational}}     X^+ \ar[dl]^{f^+}  \ar[rr]^{g'}  && X_Z \ar[d]^{f_Z} \\
 B  &&&  \ar[lll]_{\gamma} Z^+ \ar[rrr]^g  &&& Z.
}
$$

Set $\wtilde f:= \pi^+\circ f^+$.
Define $\overline \sM:= \Omega^n_{\wtilde X/Z^+}(\log \Delta_{\wtilde f}) \otimes (\wtilde f)^* \sA_{Z^+}^{-1}$. 
The natural inclusion $(\pi^+)^* \sM^+ \subseteq \overline \sM$ (after raising the power to $m$) 
identifies a global section $\overline s$ of $\overline{\sM}^m$, determined by 
$s^+$. In particular the induced map $(\wtilde\sigma)^* (\overline \sM)^{-1}\to \sO_{\wtilde T}$ 
factors through $(\wtilde \sigma)^* (\pi^+)^*(\sM^+)^{-1} \to \sO_{\wtilde T}$.
Using the notations in Subsection~\ref{HodgeSetUp}, this implies that 
$$
\xymatrix{
(\wtilde\sigma)^* \big( C^p_{\wtilde X} \otimes (\overline \sM)^{-1}  \big) \ar[rr]  \ar[d]  &&    C^p_{\wtilde T}   \\
(\wtilde \sigma)^*  \big(  C^p_{\wtilde X}  \otimes (\pi^+)^* (\sM^+)^{-1} \big)    \ar[rru]
}
$$
commutes. 

Furthermore, the inclusion $(\pi')^* \sM' \subseteq \overline \sM$ is an equality over $\wtilde X \backslash \Exc(\pi')$. 
Thus, since $\sM'$ is invertible and $X'$ is smooth, 
the section $\overline s\in H^0({\overline\sM}^m)$ induces $s'\in H^0((\sM')^m)$
such that 
\begin{equation}\label{EQUAL}
(\pi')^* s'|_{\wtilde X\backslash \Exc(\pi')}  =  \overline s|_{\wtilde X \backslash \Exc(\pi')} .
\end{equation}
Let $\widehat \sigma: \widehat T\to \wtilde X$ denote cyclic covering
associated to $(\pi')^*s'$. 
Using (\ref{EQUAL}) and the construction of such coverings (\cite[pp. 243--244]{Laz04-I})
$\widehat T$ and $\wtilde T$ are generically isomorphic. 
As such, after replacing $\widehat T$ by a higher birational model, we may assume that 
$\widehat T$ is smooth and $\widehat \sigma$ factors through $\wtilde \sigma: \wtilde T \to \wtilde X$
via a generically finite (in fact birational) morphism $\rho: \widehat T \to \wtilde T$.  
Let $\eta: \widehat T \to T^+$ and $\widehat h: \widehat T\to Z^+$ denote the naturally induced maps.

With the above construction we observe that, over the complement of $\Exc(\pi')$, the two injections 
$\widehat \sigma^* (\overline \sM)^{-1} \to \sO_{\widehat T}$ and $\widehat \sigma^* \big( (\pi')^* (\sM')^{-1} \big) \to \sO_{\widehat T}$
coincide, implying that: 

\begin{observation}\label{OBSS}
The two naturally defined injections 
$$
(\widehat \sigma)^* \big( C^p_{\wtilde X} \otimes (\overline \sM)^{-1} \big) \hooklongrightarrow  \rho^*C^p_{\wtilde T} \subseteq C^p_{\widehat T} 
\;\; ,  \;\;    (\widehat \sigma)^* \big( C^p_{\wtilde X} \otimes ( (\pi')^* \sM' )^{-1} \big) \hooklongrightarrow C^p_{\widehat T} 
$$
coincide over the complement of $\Exc(\pi')$.
\end{observation}

Following the constructions in Subsection~\ref{HodgeSetUp}, let $(\sF^+, \tau^+)$
and $(\wtilde{\sF^+}, \wtilde{\theta^+})$ be the 
logarithmic systems associated to the short exact sequences 
$C^p_{X^+} \otimes (\sM^+)^{-1}$ and $C^p_{\wtilde X} \otimes (\pi^+)^*(\sM^+)^{-1}$.
In particular we have 
$$
 \sF_i^+  =  R^if^+_* \big(  \Omega^{p-i}_{X^+/Z^+} (\log \Delta_{f^+}) \otimes (\sM^+)^{-1}  \big) \;, \;   
 \wtilde{\sF_i^+}  =  R^i \wtilde f_* \big(  \Omega^{p-i}_{\wtilde X/Z^+} (\log \Delta_{\wtilde f}) \otimes (\pi^+)^*(\sM^+)^{-1}  \big)    .
$$
Similarly, define 
$(\sF', \tau')$ and 
$(\wtilde{\sF'}, \wtilde{\tau'})$ to be the logarithmic systems respectively associated to 
$C^p_{X'} \otimes (\sM')^{-1}$ and $C^p_{\wtilde X} \otimes (\pi')^*(\sM')^{-1}$.

Let $(\sE^+, \theta^+)$ and $(\widehat \sE, \widehat \theta)$ be the logarithmic system of Hodge bundles 
associated to the canonical extension of the $\bC$-VHS of weight $n$ underlying the 
smooth loci of $h^+$ and $\widehat h$,  
and denote $(\wtilde \sE, \wtilde \theta)$ to be the image of the system associated to $C^p_{\wtilde T}$ in 
$(\widehat \sE, \widehat \theta)$, induced naturally 
by $\rho^*$. 

Let $\Phi^+: (\sF^+, \tau^+) \to (\sE^+ , \theta^+)$ 
and $\wtilde{\Phi^+}:  (\wtilde{\sF^+}, \wtilde{\tau^+} )\to (\wtilde \sE, \wtilde \theta) 
\subseteq (\widehat \sE, \widehat \theta)$ be the morphism 
of systems defined as in (\ref{eq:KeyMap}) and Proposition~\ref{prop:TajMap}. 
Denote their images respectively by $(\sG^+, \theta^+)$ and $(\wtilde{\sG^+}, \widehat \theta)$.
Furthermore, let $\Phi_{\pi^+}: (\sF^+, \tau^+) \to (\wtilde{\sF^+}, \wtilde{\tau^+})$ 
and $\Phi_{\eta}: (\sE^+, \theta^+) \to (\widehat \sE, \widehat \theta)$ 
be morphisms of systems naturally defined by pullback morphisms $(\pi^+)^*$ and $\eta^*$.
Similarly define $\Phi_{\pi'}: (\sF', \tau') \to (\wtilde{\sF'}, \wtilde{\tau'})$ and 
$\wtilde{\Phi'}:(\wtilde{\sF'}, \wtilde{\tau'})\to (\widehat \sE, \widehat \theta)$, 
with the image of the latter being denoted by $(\wtilde{\sG'}, \widehat \theta)$.
We summarize and further refine these constructions in the following lemma.
\end{set-up}

\begin{lemma}\label{PrelimLem}
In the setting of Set-up~\ref{SetUpLem} we have:
\begin{enumerate}
\item \label{1Comm}
There is a commutative diagram of systems
$$
\xymatrix{
(\sF^+, \tau^+)  \ar[rr]^{\Phi^+}  \ar[d]_{\Phi_{\pi^+}}  &&   (\sE^+, \theta^+)  \ar[d]^{\Phi_{\eta}}  \\
(\wtilde{\sF^+}, \wtilde{\tau^+})  \ar[rr]^{\wtilde{\Phi^+}}  &&    (\widehat \sE, \widehat \theta)  .
}
$$
In particular we have $(\sG^{++},  \widehat \theta)  \subseteq   ( \wtilde{\sG^+} , \widehat \theta) \subseteq  (\wtilde \sE, \wtilde \theta)
\subseteq (\widehat \sE, \widehat \theta)$,
where $(\sG^{++}, \widehat \theta)$ is the image of $(\sF^+, \tau^+)$ under $\Phi_{\eta}\circ \Phi^+$. 

\item  \label{2Include} $\sA_{Z^+} \hookrightarrow   \sG^{++}_0  = \wtilde{\sG^+_0}$.

\item \label{3Sim} There are natural morphisms $(\sF', \tau') \to (\wtilde {\sF'}, \wtilde{\tau'}) \to (\wtilde \sE, \widehat \theta)$. 
Denote the image of $(\sF', \tau')$ in $(\widehat \sE, \widehat \theta)$ by $(\sG', \widehat \theta)$ 
and that of $(\wtilde{\sF'}, \wtilde{\tau'})$ in $(\widehat \sE, \widehat \theta)$ by $(\wtilde{\sG'}, \widehat \theta)$. 
We have $\sG'_0 = \wtilde{\sG'_0} \cong \sA_{Z^+}$.
\end{enumerate}
\end{lemma}

\begin{proof}
Item \ref{1Comm} simply follows from the constructions in Set-up~\ref{SetUpLem} and the functorial properties of the morphisms 
in this diagram (remembering that as in  (\ref{eq:KeyMap})  all are naturally defined by pullback maps). 
More precisely, setting $\sigma:= \eta\circ \sigma^+$, we note that the morphisms 
$$
\xymatrix{
(\sF^+, \tau^+) \ar[r]^(.4){\Phi^+} &  (\sG^+, \theta^+) \subseteq (\sE^+, \theta^+)   \ar[rr]^{\Phi_{\eta}}  &&  (\widehat \sE, \widehat \theta) 
}
$$
are naturally defined by the pullback maps:
$$
\xymatrix{
\sigma^* \big( \Omega^p_{X^+/Z^+} (\log \Delta_{f^+})  \otimes (\sM^+)^{-1}  \big)  \ar[r]  &   \eta^*\Omega^p_{T^+/Z^+}(\log \Delta_{h^+})  
\ar[rr] &&   \Omega^p_{\widehat T/Z^+} (\log \Delta_{\widehat h})  . 
}
$$
As such their composition, which we denote by $\Psi$, satisfies the following claim (Item~\ref{1Comm}). 
\begin{claim}\label{claim:VIA}
$\Psi$ factors through $\wtilde{\Phi^+}$ via $\Phi_{\pi^+}$.
\end{claim}
\noindent
\emph{Proof of Claim~\ref{claim:VIA}.}
By construction we know that $\sigma^*\big(  C^p_{X^+} \otimes (\sM^+)^{-1} \big)  
\to   C^p_{\widehat T}$ 
factors through 
$\wtilde \tau^*\big(C^p_{\wtilde X} \otimes (\pi^+)^* (\sM^+)^{-1}  \big) 
\to C^p_{\widehat T}$. 
After applying $\dR \wtilde \tau_*$ we thus find the following commutative 
diagram of triangles in $D(\widehat X)$: 
\begin{equation}\label{eq:TRAINGLESComm}
\xymatrix{
(\pi^+)^* \big( C^p_{X^+} \otimes (\sM^+)^{-1}  \big) \otimes \dR \widehat \sigma_* \sO_{\widehat T}   \ar[dr]     \ar[rr]  &&   \dR \wtilde \tau_* C^p_{\widehat T}    \\
 &    C^p_{\wtilde X} \otimes (\pi^+)^* (\sM^+)^{-1} \otimes \dR \widehat \tau_* \sO_{\widehat T}   \ar[ur]  .
}
\end{equation}
On the other hand, the diagram 
$$
\xymatrix{
(\pi^+)^* \big( C^p_{X^+}  \otimes (\sM^+)^{-1} \big) \ar[d]  \ar[r]  &   
   (\pi^+)^* \big( C^p_{X^+} \otimes (\sM^+)^{-1}  \big) \otimes \dR \widehat \sigma_* \sO_{\widehat T}    \ar[d]  \\
C^p_{\wtilde X} \otimes (\pi^+)^* (\sM^+)^{-1}   \ar[r]  &  C^p_{\wtilde X} \otimes (\pi^+)^* (\sM^+)^{-1} \otimes \dR \widehat \sigma_* \sO_{\widehat T}   .
}
$$
naturally commutes. From (\ref{eq:TRAINGLESComm}) it thus follows that
\begin{equation}\label{eq:Comm2}
\xymatrix{
(\pi^+)^* \big( C^p_{X^+} \otimes (\sM^+)^{-1}  \big)  \ar[dr]     \ar[rr]  &&   \dR \widehat \sigma_* C^p_{\widehat T}    \\
 &    C^p_{\wtilde X} \otimes (\pi^+)^* (\sM^+)^{-1}    \ar[ur]  .
}
\end{equation}
After applying $\dR \pi^+_*$ to (\ref{eq:Comm2}) we then get
$$
\xymatrix{
 C^p_{X^+} \otimes (\sM^+)^{-1}   \ar[dr]     \ar[rr]  &&   \dR  \sigma_* C^p_{\widehat T}    \\
 &    \dR \pi^+_*\big( C^p_{\wtilde X} \otimes (\pi^+)^* (\sM^+)^{-1} \big).   \ar[ur]  
}
$$
The claim follows from applying $\dR f^+_*$ to this latter commutative diagram. \qed

Items~\ref{2Include} and~\ref{3Sim} similarly follow from the constructions in Set-up~\ref{SetUpLem}.
\end{proof}

\begin{lemma}\label{Claim0}
In the situation of Set-up~\ref{SetUpLem} and Lemma~\ref{PrelimLem} we have 
$\sA_{Z^+}\cong \sG'_0 \subseteq  \sG^{++}_0 \subseteq \widehat \sE_0$.
\end{lemma}

\begin{proof}
This is a direct consequence of the constructions in (\ref{SetUpLem}) and Lemma~\ref{PrelimLem}. 
That is, we consider the auxiliary system $( \overline \sM, \overline \tau )$ 
associated to $C^p_{\wtilde X} \otimes (\overline \sM)^{-1}$, where 
$\overline \sM: =  \Omega^n_{\wtilde X/Z^+}(\log \Delta_{\wtilde f}) \otimes (\pi^+)^*(f^+)^* \sA_{Z^+}^{-1}$
and denote its image under the natural map 
$$
 \overline\Phi:   (\overline \sF, \overline \tau)  \longrightarrow  (\wtilde \sE, \wtilde \theta)\subseteq (\widehat \sE, \widehat \theta)
$$
by $(\overline \sG, \widehat \theta)$.

\begin{claim}\label{claim:KEYKEY}
We have $  (\overline \sG, \widehat \theta)  \subseteq  ( \wtilde{\sG^+} , \widehat \theta ) \subseteq (\wtilde\sE, \wtilde\theta) 
\subseteq (\widehat \sE, \widehat\theta)$
and $\overline \sG_0 = \wtilde{\sG'_0}$.
\end{claim}

\noindent
\emph{Proof of Claim~\ref{claim:KEYKEY}.} 
By constructions in Set-up~\ref{SetUpLem} and using the inclusion $(\pi^+)^*\sM^+ \subseteq \overline \sM$, 
$\overline\Phi$ factors as 
$$
(\overline \sF, \overline \tau) \to  (\wtilde{\sF^+}, \wtilde{\tau^+}) \to  (\wtilde \sE, \wtilde\theta) \subseteq  (\widehat \sE, \widehat \theta) ,
$$ 
which establishes the desired inclusions. 

For the equality $\overline\sG_0 = \wtilde{\sG'_0}$, note that by Observation~\ref{OBSS} 
the maps 
$$
\xymatrix{
\overline \sF_0  \ar[rr]^{\overline\Phi_0} \ar[d]^{\cong} &&  \widehat\sE_0    \\
\wtilde{\sF'_0}    \ar[rru]_{\wtilde{\Phi'_0}}
}
$$
commute. Therefore,  the image of $\overline\sF_0$ and $\wtilde{\sF'_0}$ in $\widehat \sE_0$ coincide. \qed

Now, by Lemma~\ref{PrelimLem} we have $\sG^{++}_0 = \wtilde{\sG^+_0}$ and 
$\sG'_0 = \wtilde{\sG'_0}$. 
On the other hand, by Claim~\ref{claim:KEYKEY} we have 
$$
\wtilde{\sG^+_0}  \cap  \wtilde{\sG'_0} = \overline\sG_0  = \wtilde{\sG'_0} = \sG'_0.
$$
Therefore, $\sG^{++}_0 \cap \sG'_0 = \sG'_0 \cong \sA_{Z^+}$.
\end{proof}

The next lemma helps with identifying a certain subsystem of $(\sG^+, \theta^+)$ (as in Set-up~\ref{SetUpLem}) 
which will be constructed in Proposition~\ref{Prop:GoodSS} (see also \cite[Thm. 3.5.1]{WW20}). 

\begin{lemma}\label{lem:copy}
Given $\Phi_{\eta}: (\sE^+, \theta^+)\to (\widehat \sE, \widehat \theta )$ as in Set-up~\ref{SetUpLem}, 
there is Higgs subsheaf $(\overline \sE, \overline \theta) \subseteq (\sE^+, \theta^+)$ with the following properties. 
\begin{enumerate}
\item \label{ResInject} $\Phi_{\eta}|_{(\overline \sE, \overline\theta)}$ is injective. 
Denote the image of $(\overline \sE, \overline \theta)$ under $\Phi_{\eta}$ by $(\sE'', \widehat \theta)$.

\item \label{Includes}  $\sE^+_0  \subseteq \overline\sE$ and thus, as 
$\sG^+_0 \subseteq \sE_0^+$, we have $\sG^{++}_0\subseteq \sE''$.
\end{enumerate}
\end{lemma}

\begin{proof}
Let $(\mathcal V^+, \nabla^+)$ and $(\widehat{\mathcal V}, \widehat \nabla)$ be the two flat logarithmic connections 
underlying $(\sE^+, \theta^+)$ and $(\widehat \sE, \widehat \theta)$, respectively, 
and $\phi_{\eta}: (\mathcal V^+, \nabla^+) \to (\widehat{\mathcal V}, \widehat \nabla)$
the morphism of holomorphically flat bundles corresponding to $\Phi_{\eta}$.
Set $Z_0\subseteq Z^+$ to be the maximal open subset over which both $\mathcal V^+$ and $\widehat{\mathcal V}$
are polarized $\mathbb C$-VHSs defined by the smooth loci of $h^+$ and $\widehat h$, respectively. By 
Deligne~\cite[Prop.~1.13]{Deligne87} over $Z_0$ both $\mathcal V^+$ and $\widehat{\mathcal V}$ 
are semisimple. Let $\mathcal W^0\subseteq \mathcal V^+|_{Z^0}$ be the smallest 
direct sum of simple summands that contains $h^+_*\Omega^n_{T^+/Z^+} (\log \Delta_{h^+})|_{Z^0} \subseteq \mathcal V^+|_{Z^0}$,
remembering that $h^+_*\Omega^n_{T^+/Z^+}(\log \Delta_{h^+})$ 
is the extension of the lowest piece of the Hodge filtration (and as such is contained in $\mathcal V^+$). 

\begin{claim}\label{claim:FlatInject}
$\Phi_{\eta}|_{\mathcal W^0}: \mathcal W^0 \to \widehat{\mathcal V}|_{Z^0}$ is an injection. 
\end{claim}

\noindent
\emph{Proof of Claim~\ref{claim:FlatInject}.}
Note that there is a natural injection 
$$
h^+_*\Omega^n_{T^+/Z^+}(\log \Delta_{h^+}) \hookrightarrow \widehat h_*\Omega^n_{\widehat T/Z^+} (\log \Delta_{\widehat h}), 
$$
(which is an isomorphism as $\eta$ is birational) so that by the construction of $\Phi_{\eta}$ (or $\phi_{\eta}$) we have the commutative diagram 
$$
\xymatrix{
h_*^+ \Omega^n_{T^+/Z^+}(\log \Delta_{h^+})|_{Z^0}  \ar@{^{(}->}[rr]^{\phi_{\eta}}  \ar[d]^{\subseteq} &&  
  \wtilde h_*\Omega^n_{\wtilde T/Z^+} (\log \Delta_{\wtilde h})|_{Z^0}  \ar[d]^{\subseteq}  \\
\mathcal W^0  \ar[rr]^{\phi_{\eta}}  &&   \widehat{\mathcal W^0} \subseteq \widehat{\mathcal V}|_{Z^0}  ,
}
$$
where $\widehat{\mathcal W^0}$ is the image of $\phi_{\eta}|_{\mathcal W^0}$; again a semisimple 
flat bundle. Now, if $\phi_{\eta}|_{\mathcal W^0}$ is not injective, then $\widehat{\mathcal W^0}$ 
identifies with a proper summand of $\mathcal W^0$. 
In particular $h^+_*\Omega^n_{T^+/Z^+}(\log \Delta_{h^+})|_{Z^0}$ is  
contained in a smaller direct sum of simple summands of $\mathcal V^+|_{Z^0}$ than those forming 
$\mathcal W^0$, contradicting the minimality assumption on the latter. \qed

Now, according to the fundamental result of Jost-Zuo~\cite{JZ97}, 
over smooth quasi-projective varieties, there is an equivalence of categories between semisimple local systems and 
tame harmonic bundles. 
Therefore, $\mathcal W^0$ underlies 
a tame harmonic bundle $(\sE^0, \theta^0)$ (with in particular a structure of a Higgs bundle) 
over $Z^0$ (see also Mochizuki~\cite{MochKH}). Moreover, by the construction of $(\sE^0, \theta^0)$, using 
the above equivalence of categories and 
the fact that $(\sE^+, \theta^+)$ is the canonical extension of a 
tame harmonic bundle over $Z^0$~\cite{Ste76},~\cite[Sect.~22.1]{MochBookII}, 
$(\sE^0, \theta^0) \subseteq (\sE^+, \theta^+)|_{Z^0}$ is a direct summand. 
Further, as a tame harmonic bundle, $(\sE^0, \theta^0)$ extends to a logarithmic Higgs 
bundle $(\overline \sE, \overline\theta)$ on $Z^+$~\cite[Sect.~22.1]{MochBookII} (after removing some subscheme of $Z^+$
of $\codim_{Z^+}\geq 2$ if necessary), whose eigenvalues of the associated residue map 
are contained in $[0,1)$. 
By uniqueness of the canonical extension (and its construction) it follows that $(\overline \sE, \overline \theta)$
is also a direct summand of $(\sE^+, \theta^+)$. 

On the other hand, from Claim~\ref{claim:FlatInject}, and again using the above equivalence of categories, 
it follows that $\Phi_{\eta}|_{\sE^0}: (\sE^0, \theta^0) \to (\widehat \sE, \widehat \theta)|_{Z^0}$ is injective. 
As $\overline \sE$ is torsion free, we find that $\Phi_{\eta}|_{\overline \sE}$ must be injective, verifying Item~\ref{ResInject}.

For Item~\ref{Includes}, by the construction of $\mathcal W^0$ and $\sE^0$, the bundle $\sE^0$ contains 
$\sE^+_0|_{Z^0} = h^+_*\Omega^n_{T^+/Z^+}(\log \Delta_{h^+})|_{Z^0}$. 
On the other hand, we know that $\overline \sE\subseteq \sE^+$ is a direct summand. 
Therefore, as $\sE^+_0$ is torsion free, the naturally defined map 
$\sE^+_0 = h^+_*\Omega^n_{T^+/Z^+}(\log \Delta_{h^+})\to \overline \sE$ 
is indeed an inclusion.
For the rest, note that by Lemma~\ref{PrelimLem} the inclusion $\sG^{++}_0 \subseteq \widehat \sE_0$
factors through $\Phi_{\eta}:\sE^+_0 \to \widehat \sE_0$ and therefore 
by applying $\Phi_{\eta}$ to $\sG^+_0 \subseteq \sE^+_0 \subseteq \overline \sE$ 
we find $\sG^{++}_0 \subseteq \Phi_{\eta}(\overline \sE) = \sE''$.
\end{proof}

\begin{proposition}\label{Prop:GoodSS} 
In the situation of Set-up~\ref{SetUpLem}, assume further that $f_Z$ is semistable and that
$$
\sA_Z\cong  \Big( \det \big((f_Z)_*  \omega^m_{X_Z/Z}\big) \Big)^{N} (-D_{Z})  ,
$$
for some $N\in \bN$ and $D_Z\geq 0$.
Then, we can find a $\gamma^*\Omega^1_B(\log D)$-valued subsystem 
$$
(\sG'', \theta^+) \subseteq  (\sG^+, \theta^+)  \subseteq  ( \sE^+, \theta^+)
$$
equipped with an isomorphism $\sA_{Z^+}\cong \sG''_0$. 
Furthermore, over $Z^+\backslash D_{f^+}$, $(\sG'', \theta^+)$ is also $(g^*\Omega^1_Z)$-valued, 
i.e., we  have $\theta^+( \sG'' |_{Z^+\backslash D_{f^+}} ) 
\subseteq \big(  g^*\Omega^1_Z  \cap  \gamma^*\Omega^1_B(\log D_f) \big)|_{Z^+\backslash D_{f^+}} \otimes \sG''|_{Z^+\backslash D_{f^+}}$. 
\end{proposition}

\begin{proof}
We first make the following observation. 
\begin{claim}\label{claim:value}
$(\sF', \tau')$ (as defined in Set-up~\ref{SetUpLem}) is $\big(\gamma^*\Omega^1_B(\log D)\big)$-valued.
\end{claim}

\noindent
\emph{Proof of Claim~\ref{claim:value}.}
Since $f_Z$ is semistable and $g$ is flat, by~\cite[Sect.~3]{Viehweg83} 
we have $g^* (f_Z)_* \omega^m_{X_Z/Z} = (f^+)_* \omega^m_{X^+/Z^+}$.
Therefore, 
we find 
$g^*\det \big( (f_Z)_*  \big) \omega^m_{X_Z/Z} = \det (f^+)_* \omega^m_{X^+/Z^+}$, 
i.e., 
$$
\sA_{Z^+} \cong (\det f^+_*\omega^m_{X^+/Z^+})^{N}(-D_{Z^+}),
$$
for some $D_{Z^+}\geq 0$.

Let us first assume that $D_{Z^+}=0$.
Consider the system $(\sF_B,  \tau_B)$ on $B$ defined by $C^p_X\otimes  \sM_B^{-1}$, 
where $ \sM_B:= \Omega^n_{X/B} (\log \Delta_f)\otimes f^*(\sA_B)^{-1}$, with $\sA_B := (\det f_*\omega^m_{X/B})^N$.

\begin{subclaim}\label{SC}
We have 
$$
(\gamma')^* \big( \sM_B|_{X\backslash \Delta_f}  \big)  \cong  \sM' |_{X'\backslash \Delta_{f'}}  .
$$
\end{subclaim}
\noindent
\emph{Proof of Subclaim~\ref{SC}.}
Since $f$ is smooth over $X\backslash D_f$ (and thus $D_{f'} = \Supp(\gamma^*D_f)$) it suffices to show that the isomorphism 
\begin{equation}\label{eq:PB}
(\gamma')^* f^* ( \sA_B |_{X\backslash \Delta_f} ) \cong (f')^* \sA_{Z^+}|_{X'\backslash \Delta_{f'}}
\end{equation}
holds. On the other hand, by construction we have 
$f'_*\omega^m_{X'/Z^+} = \wtilde f_* \omega^m_{\wtilde X/Z^+} = f^+_*\omega^m_{X^+/Z^+}$. 
After taking the determinant we therefore find 
\begin{equation*}\label{Fav}
\sA_{Z^+}\cong (\det f'_*\omega^m_{X'/Z^+})^N  .
\end{equation*}
Moreover, by flat base change we have 
$$
\underbrace{(\gamma')^*f^*}_{(f')^* \gamma^*}  (f_*  \omega^m_{X/B} ) |_{X'\backslash \Delta_{f'}} \cong 
    (f')^*  f'_* \omega^m_{X'/Z^+} |_{X'\backslash \Delta_{f'}} .
$$
After removing a subset of $\codim_{Z^+}\geq 2$ and taking the determinant 
we find the desired isomorphism in the subclaim. \qed

Thus, according to Proposition~\ref{prop:functorial}
we have $\gamma^*(\sF_B, \tau_B)|_{Z^+\backslash D_{f'}} \cong (\sF', \tau')|_{Z^+\backslash D_{f'}}$, 
which establishes the claim. 

Now, assume that $D_{Z^+}\neq 0$.
As $\gamma$ is finite, it suffices to establish the claim over $B\backslash D_f$. Therefore, we may assume
that $f'$ and $f$ are smooth. 
With $\sA_B= (\det f_*\omega^m_{X/B})^N$ there is a natural injection $g^*\sA_Z = \sA_{Z^+} \hookrightarrow \gamma^* \sA_B$
from which it follows that $(f')^* \sA_{Z^+}  \hookrightarrow (\gamma')^*f^*\sA_B$. This implies that
$$
(\sM')^{-1} \hookrightarrow  (\gamma')^* \sM_B^{-1}  .
$$
Using the construction in Subsection~\ref{HodgeSetUp} it then follows that there is an injection 
$(\sF', \tau')  \hookrightarrow  \gamma^*(\sF_B, \tau_B)$, proving the claim.
\qed

Now, set 
$$
\sG^{\star}_i:= ( \sG_i^{++} \cap \sG'_i\cap \sE'' ) \subseteq  \sG_i^{++}\cap \sE'' \subseteq  \widehat \sE_i 
$$
so that $(\sG^{\star} = \bigoplus \sG^{\star}_i, \widehat \theta)$ is a subsystem of both $(\sG',\widehat \theta)$ 
and $(\sG^{++}, \widehat \theta)$.
In particular we have
\begin{align*}
  \sG^{\star}_0  =  & \;   ( \sG_0^{++} \cap \sG'_0\cap \sE'') \subseteq \widehat \sE_0 \\
       =  & \;  \sG^{++}_0\cap \sG'_0   ,  \;\;\text{since $\sG^{++}_0 \subseteq \sE''$ by~\ref{Includes}}  \\
       =  & \;  \sG'_0 \cong \sA_{Z^+}   ,  \;\; \text{by Lemma~\ref{Claim0}} .
\end{align*}
As the subsystem $(\sG^{\star}, \widehat \theta) \subseteq (\sG^{++}, \widehat\theta)$ is a 
Higgs subsheaf of $(\sE'', \widehat \theta)$, by Lemma~\ref{lem:copy}, 
there is a $\Phi_{\eta}$-induced isomorphic subsystem $(\sG'', \theta^+)$ of $(\sG^+,\theta^+)$ 
which is a Higgs subsheaf of $(\overline \sE, \theta^+)$.
In particular we have $\sG''_0\cong \sG^{\star}_0 \cong \sA_{Z^+}$.
Moreover, since $(\sG', \widehat \theta)$ and thus $(\sG^\star, \widehat\theta)$ 
is $\big(\gamma^*\Omega^1_B(\log D)\big)$-valued, so is $(\sG'', \theta^+)$.

Furthermore, according to Proposition~\ref{prop:functorial} we have the 
isomorphism of systems
$$
\big(  g^*(\sG, \theta) \big)|_{Z^+\backslash D_{f^+}}  \cong  ( \sG^+, \theta^+ )|_{Z^+\backslash D_{f^+}} .
$$
Let $(\sG''', \theta)$ be the subsystem of $\big( g^* (\sG, \theta) \big)|_{Z^+\backslash D_{f^+}}$
induced by $(\sG'', \theta^+)$ via this isomorphism.
Clearly the isomorphism $ (\sG'', \theta^+ )|_{Z^+\backslash D_{f^+}} \cong (\sG''', \theta)$ 
implies that $(\sG'', \theta^+ )|_{Z^+\backslash D_{f^+}}$ is $(g^*\Omega^1_{Z\backslash D_f})$-valued. 
\end{proof}

\medskip

\subsection{Functoriality.~II: descent of kernels.}
For the purpose of application later on in Section~\ref{sect:Section4-MainThm}, we need to further 
refine our understanding of the 
properties of the systems constructed in Subsection~\ref{subsect:functorial}, when 
$g$ is induced by a \emph{flattening} of a proper morphism, cf.~\cite{GR71}.
To this end, we consider
the following situation. 

Let $f: V\to W$ be a projective morphism of smooth quasi-projective varieties with 
connected fibers of positive dimension. Let $f': V'\to W'$ be a desingularization of a flattening 
of $f$, with the associated birational morphisms $\pi: V'\to V$ and $\mu:W'\to W$,
so that, by construction, every $f'$-exceptional divisor is $\pi$-exceptional.

\begin{definition}[codimension one flattening]\label{def:flatten}
In the above setting, let $V^\circ\subseteq V$ be the complement of the center of 
$\pi$. We call the induced flat morphism $f^\circ: V^\circ \to W'$ a \emph{codimension
one flattening of $f$}.
\end{definition}

\smallskip

\begin{notation}
In the rest of this article we denote the
reflexivization of the determinant sheaf by $\det(\cdot)$.
\end{notation}

We will be working in the setting of Set-up~\ref{SetUpLem}.

\begin{notation}\label{notnot}
In the setting of Proposition~\ref{Prop:GoodSS} define 
$\sN''_i:= \ker(\theta^+|_{\sG''_i})$ and $\sN^+_i:= \ker(\theta^+|_{\sG_i^+})$. 
\end{notation}

\begin{proposition}[descent of kernels of subsystems of VHS]\label{prop:descent}
In the setting of Subsection~\ref{SetUpLem}, assume that the varieties are projective and that the maps
exist after removing closed subsets of $Z$ and $B$ of $\codim\geq 2$.
If $g: Y^+ \to Y$ is a codimension one flattening 
of a proper morphism with connected fibers, then, for every $i$, there is a
pseudo-effective line bundle $\sB_i$ on $Z$ such that

\begin{equation}\label{eq:descent1}
\big( (\det \sN^+_i)^{-1} \big)^{a_i} \cong g^*\sB_i ,
 \end{equation}
for some $a_i \in \bN$. 

\end{proposition}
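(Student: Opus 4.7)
The plan is to compare $\det \sN_i^+$ with the pullback $g^*(\det \sN_i)$ via the functoriality of Subsection~\ref{subsect:functorial}, identify the discrepancy as a ramification divisor along $D_{f^+}$, and then exploit the codimension-one flattening hypothesis on $g$ to descend a suitable power to $Y$.

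First I would apply Proposition~\ref{prop:functorial} together with the flatness of $g$. Taking kernels of the two Higgs fields in diagram~(\ref{eq:maps}) produces an injection $g^*\sN_i \hookrightarrow \sN_i^+$ of torsion-free sheaves of equal generic rank that is an isomorphism over $Y^+ \setminus D_{f^+}$. Passing to reflexive determinants yields
\begin{equation*}
\det \sN_i^+ \;\cong\; g^*(\det \sN_i) \otimes \sO_{Y^+}(E_i),
\end{equation*}
where $E_i$ is an effective divisor supported on $D_{f^+}$.

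Next I would analyze $E_i$. Since $D_{f^+}$ is set-theoretically contained in $g^{-1}(D_f)$ and $g$ is flat with positive-dimensional fibers, no component of $D_{f^+}$ dominates $Y$: every prime component $F_{jk}$ of $D_{f^+}$ lies inside $g^{-1}(D_j)$ for some component $D_j$ of $D_f$, with ramification multiplicity $m_{jk}$. The key technical step is to verify the proportionality
\begin{equation*}
\mathrm{ord}_{F_{jk}} E_i \;=\; m_{jk}\cdot c_j,
\end{equation*}
where $c_j \in \bQ_{\geq 0}$ depends only on $D_j$. This is plausible from the functorial construction of Subsection~\ref{HodgeSetUp}: near the generic point of $D_j$ the logarithmic Higgs field has a canonical residue structure, and pulling back by $g$ rescales the logarithmic $1$-forms by $m_{jk}$, so the order of vanishing of $\det \sN_i^+ / g^*(\det \sN_i)$ along $F_{jk}$ should scale accordingly. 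Granting this, taking $a_i$ to clear denominators yields $a_i E_i = g^*\Gamma_i$ for the $\bQ$-Cartier divisor $\Gamma_i := \sum_j (a_i c_j) D_j$ on $Y$; setting $\sB_i := ((\det\sN_i)^{-1})^{a_i} \otimes \sO_Y(-\Gamma_i)$ produces $((\det\sN_i^+)^{-1})^{a_i} \cong g^*\sB_i$. Pseudo-effectivity of $\sB_i$ would then follow from pseudo-effectivity of $(\det\sN_i^+)^{-1}$ on $Y^+$, a Hodge-theoretic positivity statement for kernels of Higgs fields in systems underlying a variation of Hodge structure, combined with the fact that pseudo-effectivity descends along a surjective proper map with connected fibers.

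I expect the hardest step to be the proportionality claim for $E_i$. While the heuristic (residues of logarithmic Higgs fields scale with the ramification index) is transparent, a rigorous verification requires unravelling the comparison map $g^*(\sG_i, \theta) \to (\sG_i^+, \theta^+)$ at the generic point of each component of $D_f$ in local coordinates. The codimension-one flattening hypothesis on $g$ is essential here: it guarantees that no genuinely $g$-exceptional divisor on $Y^+$ contributes to $E_i$ outside of $g^{-1}(D_f)$, which would otherwise produce components of $E_i$ that are not $\bQ$-linearly equivalent to a pullback and thereby obstruct the descent.
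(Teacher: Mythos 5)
Your first and last steps agree with the paper: the injection from Proposition~\ref{prop:functorial} produces $\det\sN_i^+ \cong g^*(\det\sN_i)\otimes\sO_{Y^+}(E_i)$ with $E_i$ effective and supported over $D_f$, and the pseudo-effectivity of $\sB_i$ follows from Zuo's seminegativity of kernels of Higgs fields plus descent of pseudo-effectivity under a proper surjection. The gap is in the middle step: you assert, but do not prove, the proportionality $\mathrm{ord}_{F_{jk}}E_i = m_{jk}\cdot c_j$, and the heuristic you offer does not support it. The twist relating $g^*(\sE,\theta)$ to the intrinsic canonical extension $(\sE^+,\theta^+)$ along a component $F_{jk}$ is governed by renormalizing residue eigenvalues into $[0,1)$: if the residue eigenvalue along $D_j$ is $\alpha$ and $F_{jk}$ appears in $g^*D_j$ with multiplicity $m_{jk}$, the discrepancy along $F_{jk}$ involves $\lfloor m_{jk}\alpha\rfloor$, which is not a linear function of $m_{jk}$. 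Without the proportionality, your divisor $a_iE_i$ need not be a pullback and the descent fails.

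The paper avoids this local computation entirely by arguing globally. Since $E_i$ is supported over $D_f$, the restriction $\det\sN_i^+|_{Y_y^+}$ is trivial for every $y\in Y\setminus D_f$, while being of the form $\sO(E_i|_{Y_y^+})$ with $E_i$ effective for every $y$. The codimension-one flattening hypothesis is what makes $g$ \emph{flat} (and, after passing to the flattening of $g$ and removing the non-flat locus, proper). Flatness and properness then force the numerical class of $\det\sN_i^+|_{Y_y^+}$ to be locally constant in $y$, hence numerically trivial, and effectivity upgrades this to triviality on every fiber; $\det\sN_i^+$ then descends to $Y$ by the seesaw/pushforward argument, with no multiplicity bookkeeping. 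In particular, the flattening hypothesis is used to enable this flatness-rigidity argument, not merely (as you suggest) to remove $g$-exceptional components from the support of $E_i$. The proportionality you wanted is in fact true, but it is a \emph{consequence} of the descent rather than the route to it.
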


\begin{proof}
After replacing $Y$ and $Y^+$ by $Z$ and $Z^+$ in Proposition~\ref{prop:functorial}, let 
$(\sG, \theta)$ be the image of $\Phi: (\sF, \tau)\to (\sE, \theta)$. 
Set $\sN_i := \ker (\theta|_{\sG_i})$. We first consider the case where $g$ is assumed to be proper.
Again, as $g$ is flat, pre-image of subsets of $Y$ of $\codim_Y \geq 2$ are of $\codim_{Y^+}\geq 2$ 
and therefore we may assume that $\sN_i$ is locally free. 
From Diagram~\ref{eq:maps} (and Proposition~\ref{prop:functorial})
it follows that there is an injection 

$$
g^* \big(\det \sN_i \big)  \longrightarrow  \det (\sN_i^+),
$$
which is an isomorphism over $Z^+\setminus D_{f_Z^+}$. 
Therefore, $\det \sN_i^+$ is $g$-effective and that $\det \sN_i^+ \cong \sO_{Z_z^+}$, 
for a general point $z\in Z$. Thanks to properness and flatness of $g$, from the latter isomorphism it follows that
$$
\det \sN_i^+ \equiv_g 0.
$$
Therefore $\det \sN_i^+$ is trivial over $Z$. Consequently there is a line bundle 
$\sB_i$ on $Z$ satisfying the isomorphism (\ref{eq:descent1}). 

On the other hand, thanks to weak seminegativity of kernels of Higgs fields underlying polarized 
VHS of geometric origin~\cite{Zuo00}
(see~\cite[Sect.~3]{Taj18} for further explanation and references), $(\det \sN^+_i)^{-1}$ 
is pseudo-effective. Therefore so is $\sB_i$, cf.~\cite{BDPP}.

For the case where $g$ is not proper, we repeat the same argument for the flattening of $g$
(from which $g$ arises), after removing the non-flat locus from the base.
\end{proof}

\medskip

Next, we recall the trick of Kov\'acs and Viehweg-Zuo involving iterated Kodaira-Spencer maps,
which we adapt to our setting.

\begin{lemma}\label{lem:KS}
In the setting of Proposition~\ref{Prop:GoodSS},
assume that 
$\kappa( Z, \sG_0 ) = \kappa(Z, \sA_Z)>0 $. Then, up to a suitable power, 
there is an integer $m>0$ for which $\theta^+$ induces an injection

\begin{equation}\label{eq:KS}
  \sG''_0 \otimes \underbrace{\big(   \det \sN_m^+ \big)^{-1}}_{g^*\sB_m}  \hooklongrightarrow  
          \big( \gamma^*\Omega^1_B(\log D_f)  \big)^{\otimes k}     \subseteq   \big(  \Omega^1_{Z^+} (\log D_{f^+})  \big)^{\otimes k},
\end{equation}
for some $k\in \bN$ and where $\sB_m$ is a pseudo-effective line bundle on $Z$. 
Furthermore, over $Z^+\backslash D_{f^+}$ the injection (\ref{eq:KS}) factors through the inclusion 
$$
\big(  g^*\Omega^1_Z \cap  \gamma^* \Omega^1_B(\log D_f) \big)^{\otimes k} |_{Z^+\backslash D_{f^+}} 
 \subseteq  \big(  \gamma^*\Omega^1_B(\log D_f)  \big)^{\otimes k}|_{Z^+\backslash D_{f^+}}  .
$$
\end{lemma}

\begin{proof}
By Proposition~\ref{Prop:GoodSS} we have $\sG''_0\cong \sA_{Z^+}= g^*\sA_Z$ so that $\kappa(\sG''_0)>0$. 
Noting that (again by Proposition~\ref{Prop:GoodSS}) we have 
$(\sG'', \theta^+) \subseteq  (\sG^+, \theta^+)  \subseteq (\sE^+, \theta^+)$, 
for any non-negative integer $i$, we consider the image $\sG''_0$ under the morphism 
$$
 \theta^+_i : =  \underbrace{(\id \otimes  \theta^+) \circ  \ldots \circ (\id \otimes  \theta^+)}_{\text{$i$ times}} \circ \;  \theta^+ : 
 \sG''_0 \longrightarrow  \big(  \gamma^*\Omega^1_B(\log D_f)  \big)^{\otimes (i+1)}  \otimes   \sG''_{i+1}.
$$
Let $m:= \max \{  i  \; \big|  \; { \theta^+}_i( \sG''_0 ) \neq 0 \}$ so that there is an injection 

$$
 \sG''_0   \hooklongrightarrow    \big(  \gamma^*\Omega^1_B(\log D_f)  \big)^{\otimes m}  \otimes  \sN''_m ,
$$
where $\sN''_i: =\ker ( \theta^+|_{\sG''_i} )$ (as in Notation~\ref{notnot}).

\begin{claim}\label{claim:bigm}
$m\geq 1$.
\end{claim}

\noindent
\emph{Proof of Claim~\ref{claim:bigm}.} If the map $ \theta^+:  \sG''_0 \to  \gamma^*\Omega^1_B(\log D_f)  \otimes  \sG''_1$ is zero, 
then $\sG''_0$ is anti-pseudo-effective~\cite{Zuo00}.
But this contradicts the inequality $\kappa(\sG''_0)>0$. \qed

Now, from the inclusion of the systems $(\sG'', \theta^+) \subseteq (\sG^+, \theta^+)$ 
we know that $\sN''_m\subseteq \sN^+_m$ 
(Proposition~\ref{Prop:GoodSS}). 
Therefore, there is an injection 
$$
 \sG''_0   \hooklongrightarrow    \big(  \gamma^*\Omega^1_B(\log D_f)  \big)^{\otimes m}  \otimes  \sN^+_m .
$$
Consequently, we find the desired injection~(\ref{eq:KS}).
The isomorphism involving the pseudo-effective line bundle $\sB_m$ follows from Proposition~\ref{prop:descent}.

The last assertion is the direct consequence of the fact that by Proposition~\ref{Prop:GoodSS}
we have $\theta^+( \sG'' |_{Z^+\backslash D_{f^+}} ) 
\subseteq \big(  g^*\Omega^1_Z  \cap  \gamma^*\Omega^1_B(\log D_f) \big)|_{Z^+\backslash D_{f^+}} \otimes \sG''|_{Z^+\backslash D_{f^+}}$.
\end{proof}

\medskip

\section{A bounded moduli functor for polarized schemes}
\label{sect:Section3-Functor}

In this section we will construct a moduli functor that is especially tailored to the 
study of projective families of good minimal models with canonical singularities
(see~\cite{KM98} and \cite{KollarSingsOfTheMMP} for background on the minimal model program 
and the relevant classes of singularities). Let us first recall a few standard notations and 
definitions. In this section all schemes are assumed to be separated and of finite type (see \cite[p.12]{Viehweg95}). 

Let $X$ be a normal scheme and $K_X$ its canonical divisor. By $\omega_X$ we denote 
the divisorial sheaf $\sO_X(K_X)$. 
For a morphism of normal schemes 
$f:X\to B$, assuming that $K_B$ is Cartier, we set $\omega_{X/B}:= \sO_X (K_{X/B})$, 
where $K_{X/B}:= K_X - f^* K_B$. 
Given a coherent sheaf $\sF$ on $X$ and any $m\in \bN$, we define $\sF^{[m]}: = (\sF^{\otimes m})^{**}$ 
to be the $m$-th reflexive power of $\sF$. 

\begin{definition}[relative semi-ampleness]
\label{def:relsemi}
Given a proper morphism of $f:X\to B$ of schemes and a line bundle $\sL$ on $X$, 
we say $\sL$ is semi-ample over $B$, or $f$-semi-ample, if for some $m\in \bN$ 
the line bundle $\sL^m$ is globally generated over $B$, that is
the natural map $f^* f_*\sL^m  \to \sL^m$ is surjective. 
\end{definition}
We note that from the definition it follows that for $f$-semi-ample $\sL$ we have a naturally
induced morphism 

\begin{equation}
\label{canmap}
\psi: X\longrightarrow  \mathbb P_B(f_* \sL^m):= \mathrm{Proj}_{\sO_B} ( \Sym (f_*\sL^m) )
\end{equation}
over $B$, with a $B$-isomorphism $\sL^m \cong \psi^* \sO_{\mathbb P (f_*\sL)}(1)$. 
In particular $\sL^m|_{X_b}$ is globally generated, for every $b\in B$.
Moreover, we say $\sL$ is $f$-ample, if $\sL$ is $f$-semi-ample and the morphism (\ref{canmap}) is an embedding 
over $B$ (see for example \cite[Sect.~1.7]{Laz04-I} for more details).


\begin{notation}[pullback and base change] For every morphism $\alpha: B'\to B$, we denote 
the fiber product $X\times_B B'$ by $X_{B'}$, with the natural projections $f':X\times_B B' \to B'$ 
and $\pr: X\times_B B'\to X$.
Furthermore, for a coherent sheaf $\sF$ on $X$, we define $\sF_{B'} : = \pr^*\sF$.
\end{notation}

We begin by recalling Viehweg's moduli functor $\mfM$ for polarized schemes~\cite[Sect~1.1]{Viehweg95}.
The objects of this functor are isomorphism classes of projective polarized schemes $(Y, L)$, with $L$ being ample. 
We write $(Y,L)\in \Ob(\mfM)$.
The morphism $\mfM : \mathfrak{Sch}_{\bC} \to 
\mathfrak{Sets}$ is defined by

\begin{equation*}
\begin{aligned}
\mfM (B)  = &  \Big\{ \text{Pairs} \; (f: X\to B, \sL)\;  \bigm\mid  \text{$f$ is flat and projective, $\sL$ is invertible} \\ 
                   & \;\;  \;\;  \;\;  \;\;  \;\;  \;\;   \;\;  \;\;  \;\;  \;\;  \;\;  \;\;  \;\;  \;\;   \;\;  \;\;  \;\;  \left. \text{and $(X_b,\sL_b)\in \Ob(\mfM)$, for all $b\in B$}, \Big\} \right/ \sim ,
\end{aligned}
\end{equation*}
for any base scheme $B$. Here, the equivalence relation $\sim$ is given by 

$$
\begin{aligned}
\big(  f_1: X_1 \to B , \sL_1  \big)  \sim  \big( f_2: X_2 \to B , \sL_2   \big)   \iff & \text{there is a $B$-isomorphism $\sigma: X_1\to X_2$}  \\
        & \text{such that $ \sL_1 \cong \sigma^* \sL_2 \otimes f_1^*\sB$}, \\
        & \text{for some line bundle $\sB$ on $B$}.
\end{aligned}
$$

\begin{definition}[\protect{\cite[Def.~2.2]{Hassett-Kovacs04}}]
\label{def:openclosed}
Let $\mathfrak F \subset \mathfrak M$ be a submoduli functor. We say $\mathfrak F$ is \emph{open}, if 
for every $(f: X \to B, \sL) \in \mfM(B)$ the set 
$V=\{ b \in B \; |\;  (X_b, \sL_b) \in \Ob(\mathfrak F)  \}$ is open in $B$ and $(X_V \to V, \sL_V) \in \mathfrak F(V)$.
The submoduli functor $\mathfrak F\subset \mathfrak M$ is \emph{locally closed}, if   
for every $(f: X \to B, \sL) \in \mfM(B)$, there is a locally closed subscheme $j: B^u \hookrightarrow B$
such that for every morphism $\phi: T\to B$ we have: $(X_T \to T, \sL_T) \in \mathfrak F(T)$, if and only if 
there is a factorization
$$
\xymatrix{
T \ar@/^5mm/[rr]^{\phi} \ar[r] & B^u \ar[r]^{j}  & B. 
}
$$
\end{definition}

We note that by definition $\mathfrak F \subset \mfM$ is open, if and only if it is locally closed 
and $B^u$ is open.

\begin{definition}[\protect{\cite[Def.~1.15.(1)]{Viehweg95}}]
\label{def:bounded}
Given a moduli functor of polarized schemes $\mathfrak F$, by $\mathfrak F_h$ 
we denote the submoduli functor whose objects $(Y,L)$ have $h$ as their Hilbert polynomial with respect to 
$L$. We say a submoduli functor $\mathfrak F_h \subset \mathfrak M_h$ is \emph{bounded}, if there is $a_0\in \bN$ such that, 
for every $(Y,L) \in \Ob (\mathfrak F_h)$ and any $a\geq a_0$, the line bundle $L^a$ 
is very ample and $H^i(Y, L^a)=0$, for all $i>0$.
\end{definition}

For a positive integer $N$, we now consider a new submoduli functor $\mfM^{[N]}\subset \mfM$, whose objects
$(Y,L)$ verify the following additional properties. 

\begin{enumerate}
\item \label{object1} $Y$ has only canonical singularities. 
\item \label{object2} $\omega_Y^{[N]}$ is invertible and semi-ample ($N$ is not necessarily the minimum such integer).
\item \label{object3} For all $a\geq 1$, the line bundle $L^a$ is very ample and $H^i(Y,L^a)=0$, for all $i>0$.
\end{enumerate}

\begin{remark}\label{rk:bounded}
Condition~\ref{object3} means that $\mfM_h^{[N]}$ is bounded
by construction (see Definition~\ref{def:bounded}). 
\end{remark}

We note that, with fibers of $(f: X\to B) \in \mathfrak M(B)$ being normal, if $B$ is nonsingular, then $X$ is also normal. 
The following observation of Koll\'ar shows that over nonsingular base schemes, 
for such morphisms a reflexive power $N$ of $\omega_{X/B}$ is invertible. 
Therefore, over regular base schemes, the formation of $\omega_{X/B}^{[N]}$ commutes with 
pullbacks~\cite[Lem.~2.6]{Hassett-Kovacs04}.
We will see in Subsection~\ref{SS:FT} that this property is crucial 
for $\mathfrak M^{[N]}_h$ to be well-behaved. 


\begin{claim}[\protect{cf.~\cite[Lect.~6]{CKM88}}]\label{claim:Kol}
Let $f:X\to B$ be a flat projective morphism of varieties, with $B$ being smooth.
If $X_b$ has only canonical singularities with invertible $\omega_{X_b}^{[N]}$,  
then $\omega_X^{[N]}$ and thus $\omega^{[N]}_{X/B}$ are invertible near $X_b$. 
Moreover, $\omega_{X/B}^{[N]}$ is flat over a neighborhood of $b$. 
\end{claim}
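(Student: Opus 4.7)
The plan is to reduce the claim to the invertibility of $\omega_{X/B}^{[N]}$ and then produce this invertibility locally via a cyclic cover argument. Since $B$ is smooth, $\omega_B$ is invertible and the standard isomorphism $\omega_X^{[N]} \cong \omega_{X/B}^{[N]} \otimes f^{*}\omega_B^{\otimes N}$ of reflexive sheaves reduces the two invertibility statements to a single one. Moreover, the final assertion---flatness of $\omega_{X/B}^{[N]}$ over a neighborhood of $b$---is then automatic: any invertible sheaf on $X$ is locally free over $\sO_X$, hence flat over $B$ through $f$.

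Next I would verify that $X$ is normal (and in fact Cohen--Macaulay) along $X_b$. Canonical singularities in characteristic zero are rational and therefore Cohen--Macaulay, so $X_b$ is CM; combined with smoothness of $B$ and flatness of $f$, this gives that $X$ is CM along $X_b$. The singular locus of $X_b$ has codimension $\geq 2$, and by flatness together with upper semi-continuity of the relative singular locus, $\mathrm{Sing}(X)$ has codimension $\geq 2$ in $X$ after shrinking to a neighborhood of $X_b$. Serre's criterion then gives normality. With $f$ a flat CM morphism over a smooth base, the formation of $\omega_{X/B}$ commutes with restriction to fibers, so that $\omega_{X/B}|_{X_b} \cong \omega_{X_b}$.

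The heart of the argument is the local construction of an index-one (canonical) cover and its extension across the family. Fix $x \in X_b$ and let $r_x$ be the Cartier index of $K_{X_b}$ at $x$; by hypothesis $r_x \mid N$. A local generator of $\omega_{X_b}^{[r_x]}$ near $x$ determines a cyclic $\mu_{r_x}$-cover $\widetilde{X}_b \to X_b$, \'etale in codimension one, with $\widetilde{X}_b$ canonical and Gorenstein. Following Koll\'ar's technique in CKM, Lecture~6, one extends this cover to an \'etale neighborhood of $x$ in $X$: the generator on the fiber lifts to a local section of $\omega_{X/B}^{[r_x]}$ on the smooth locus of $f$ (where the sheaf is already invertible), and the resulting relative cyclic cover $\widetilde{X} \to X$ is flat over $B$ with canonical Gorenstein fibers, so $\omega_{\widetilde{X}/B}$ is invertible near the preimage of $x$. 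Descending along the $\mu_{r_x}$-action then shows that $\omega_{X/B}^{[r_x]}$, and hence $\omega_{X/B}^{[N]}$, is invertible near $x$.

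The main obstacle is precisely this last step: one must know that the chosen local generator of $\omega_{X_b}^{[r_x]}$ actually lifts to a section of the reflexive sheaf $\omega_{X/B}^{[r_x]}$ defined throughout a neighborhood of $x$ in $X$, and that the resulting cover inherits canonical singularities in all nearby fibers. This requires openness of the canonical locus in flat families and a base-change statement for reflexive powers of the relative dualizing sheaf in this restricted setting---both of which are the core deformation-theoretic content of Koll\'ar's analysis and cannot be bypassed by purely formal manipulations of reflexive hulls.
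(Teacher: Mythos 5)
Your overall strategy is the same as the paper's---pass to a local index-one cover of $(X_b,x)$ and extend it over a neighborhood of $b$---and two of your preliminary observations are correct and useful. The reduction of $\omega_X^{[N]}$ to $\omega_{X/B}^{[N]}$ via $\omega_X^{[N]}\cong\omega_{X/B}^{[N]}\otimes f^*\omega_B^{\otimes N}$ is fine, and your remark that the flatness assertion is automatic once invertibility is established (an invertible sheaf on $X$ is flat over $B$ since $f$ is flat) is a legitimate and in fact shorter route than the paper's direct-summand argument.

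The gap is exactly where you flag it: you say the extension of the cover across the family ``cannot be bypassed by purely formal manipulations'' and propose to close it by appealing to openness of the canonical locus in flat families and a base-change statement for reflexive powers of $\omega_{X/B}$. This is not quite the right chain, and it risks circularity, since openness of canonicity is precisely one of the things established \emph{after} this claim in the same section. What the paper does instead is: cite \cite[Cor.~6.15]{CKM88} for the local lift $\rho_x\colon U'_x\to U_x$ of the index-one cover over an open $V_x\subseteq B$ (that citation supplies precisely the deformation-theoretic input you felt was missing); observe that the special fiber $(U'_x)_b$ is canonical, hence rational; invoke Elkik \cite{Elkik78} to conclude that the \emph{total space} $U'_x$ has rational, hence Cohen--Macaulay, singularities near $(U'_x)_b$; deduce, since $V_x$ is regular and $U'_x\to V_x$ is flat, that the fibers are Cohen--Macaulay, so that the honest dualizing sheaf $\omega_{U'_x/V_x}$ is flat over $V_x$ and commutes with base change \cite[3.6.1]{ConDualBook}; and then conclude, since $(\omega_{U'_x/V_x})_b\cong\omega_{(U'_x)_b}$ is invertible, that $\omega_{U'_x/V_x}$ is invertible near the fiber, so $N\cdot K_{U_x}$ is Cartier. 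The point your sketch misses is that no control of the singularities of the nearby fibers of $U'_x$ is needed (you do not need them to be canonical, or even Gorenstein); one only needs the total space to have rational singularities, which Elkik's theorem yields directly from the special fiber. If you replace ``openness of canonicity'' and ``base change for reflexive powers'' with CKM~6.15 for the lift, Elkik for the total space, and CM base change for the genuine dualizing sheaf on the cover, your proposal becomes a complete proof.
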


\noindent
\emph{Proof of Claim.~\ref{claim:Kol}}. For every $x\in X_b$, let $\rho_x: U'_x \to U_x$ be the 
local lift of the index-one covering of $(X_b, x)$ over an open subset $V_x\subseteq B$~\cite[Cor.~6.15]{CKM88} 
so that $\omega_{(U'_x)_b}$ is invertible.
By construction $(U'_x)_b$ has only canonical and therefore rational singularities \cite[Cor.~5.25]{KM98}.  
As rational singularities degenerate into rational singularities~\cite{Elkik78},
$U'_x$ has rational singularities and the 
induced family $f\circ \rho_x: U'_x \to V_x$ has Cohen-Macaulay fibers (after restricting to a smaller subset, 
if necessary).
Using base change through $b\to V_x$ we thus find that $(\omega_{U'_x/V_x})_{b}$ is invertible 
\cite[3.6.1]{ConDualBook}. Therefore, so is $\omega_{U'_x/V_x}$. 
Since $V$ 
is regular, it follows that $\omega_{U'_x}$ is also invertible. Consequently $N\cdot K_{U_x}$ is Cartier,
as required. Furthermore, $\omega_{U'_x/V_x}$ is flat over $V_x$, 
and thus so is $(\rho_x)_* \omega_{U'_x/V_x}$. On the other hand, by construction, 
$\omega^{[N]}_{U_x/V_x}$ is a direct summand of $(\rho_x)_* \omega_{U'_X/V_x}$, cf.~\cite[Cor.~3.11]{EV92}.
Therefore, 
$\omega^{[N]}_{Ux/V_x}$ 
is flat over $V_x$.
\qed

\smallskip

\subsection{The Parametrizing space of $\mfM_h^{[N]}$}
\label{SS:FT}
Our aim is now to show that 
the functor $\mfM^{[N]}_h$ has an algebraic coarse moduli space. The next proposition
is our first step towards this goal. For the definition of a separated functor of polarized schemes 
we refer to \cite[Def.~1.15.(2)]{Viehweg95}.

\begin{notation}
For any $d\in \bN$, by $\Hilb^d_h$ we denote the Hilbert scheme of projective subschemes of $\mathbb P^d$ with 
Hilbert polynomial $h$. 
\end{notation}

\begin{proposition}\label{prop:good}

The subfunctor $\mfM_h^{[N]}\subset \mfM_h$ is open (thus locally closed) and separated.
\end{proposition}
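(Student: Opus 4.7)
The plan is to verify that each of the three defining conditions of $\mfM^{[N]}$ is an open condition on the base of a family in $\mfM_h(B)$, and then to establish separatedness by a Matsusaka--Mumford-type extension argument for polarized pairs with nef canonical class.

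Fix $(f: X \to B, \sL) \in \mfM_h(B)$ and a point $b_0 \in B$ with $(X_{b_0}, \sL_{b_0}) \in \Ob(\mfM^{[N]})$; I will produce an open neighborhood of $b_0$ over which every fiber lies in $\Ob(\mfM^{[N]})$. First, the invertibility of $\omega_{X_b}^{[N]}$ near $b_0$ is provided directly by Claim~\ref{claim:Kol}, which, after passing to a smooth local model of $B$ if needed, also yields flatness of $\omega_{X/B}^{[N]}$ over a neighborhood of $b_0$. Second, openness of canonical singularities on the fibers reduces, via the relative index-one cover (now an honest cyclic cover), to openness of rational Gorenstein singularities, which is Elkik's theorem. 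Third, with $\omega_{X/B}^{[N]}$ an honest line bundle flat over the neighborhood, semi-ampleness on $X_{b_0}$ supplies an integer $m$ such that $\omega_{X_{b_0}}^{[mN]}$ is globally generated; cohomology-and-base-change combined with Nakayama then promotes this to $f$-global generation of $\omega_{X/B}^{[mN]}$ on a (possibly smaller) neighborhood, which is equivalent to fiberwise semi-ampleness. Fourth, the relative very-ampleness of $\sL^a$ is open by standard results on relative ampleness (cf.~\cite[Thm.~1.2.17]{Laz04-I}), and $H^i(X_b, \sL_b^a) = 0$ is open by semi-continuity; boundedness (Remark~\ref{rk:bounded}) reduces the infinitely-many conditions indexed by $a \geq 1$ to finitely many effective tests, and a finite intersection of open loci is open. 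Intersecting the resulting open neighborhoods yields the desired open subscheme $V \subset B$ with $(X_V \to V, \sL_V) \in \mfM_h^{[N]}(V)$, which gives openness, and hence local-closedness, in the sense of Definition~\ref{def:openclosed}.

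For separatedness, the plan is the following. Given a DVR $R$ with fraction field $K$, and two objects $(f_i: X_i \to \Spec R, \sL_i) \in \mfM_h^{[N]}(\Spec R)$ for $i=1,2$ together with a $K$-isomorphism $\sigma_K: (X_1)_K \to (X_2)_K$ matching polarizations up to a pullback twist by a line bundle on $\Spec K$, I would extend $\sigma_K$ to an $R$-isomorphism respecting the polarizations up to a twist from $\Spec R$. The central fibers $X_{i,0}$ have canonical singularities with $\omega_{X_{i,0}}^{[N]}$ semi-ample, in particular $K_{X_{i,0}}$ is nef, so any birational modification of $X_i$ over $R$ that is an isomorphism on the generic fiber is forced to be an isomorphism (no $K$-negative exceptional loci can be created). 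The polarized Matsusaka--Mumford theorem, applied to the pairs $(X_i, \sL_i)$, then extends $\sigma_K$ uniquely to the required $R$-isomorphism.

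The main obstacle is the third step above — openness of semi-ampleness — since semi-ampleness is not open in general families. The argument sidesteps this difficulty precisely because Claim~\ref{claim:Kol} promotes $\omega_{X/B}^{[N]}$ to an honest line bundle flat over a neighborhood of $b_0$, which is exactly the hypothesis needed to spread fiberwise base-point-freeness of a high multiple to a relative statement by cohomology-and-base-change.
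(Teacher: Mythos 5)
Your overall structure — openness of each defining condition, then a valuative criterion for separatedness — matches the paper's, but there are two places where your argument either has a genuine gap or takes a substantially different (and under-justified) route.

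\textbf{Semi-ampleness step.} You write that ``cohomology-and-base-change combined with Nakayama'' promotes fiberwise global generation of $\omega_{X_{b_0}}^{[mN]}$ to relative global generation near $b_0$. For the Nakayama argument to run, one needs the base-change map $f_*\omega^{[mN]}_{X/B}\otimes k(b_0)\to H^0\bigl(X_{b_0},\omega^{[mN]}_{X_{b_0}}\bigr)$ to be \emph{surjective}, so that the generating sections on the special fiber lift to a neighborhood. That surjectivity is not automatic: $h^0$ is only upper semi-continuous, and without either $h^0$-constancy or higher cohomology vanishing, sections on the special fiber need not extend. The paper supplies exactly this missing ingredient via Takayama's deformation-invariance of plurigenera for canonical singularities, which gives $b'\mapsto h^0\bigl(X_{b'},\omega^{[N]\cdot\nu}_{X_{b'}}\bigr)$ constant, hence (by Grauert/cohomology-and-base-change) that the restriction map is an isomorphism. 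Invoking ``cohomology-and-base-change'' without this input is a genuine gap — it is precisely the point at which the argument could fail.

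\textbf{Separatedness.} You propose to extend the generic isomorphism via a polarized Matsusaka--Mumford theorem, with nefness of $K_{X_{i,0}}$ preventing modifications. The classical Matsusaka--Mumford theorem is stated for \emph{smooth} non-ruled fibers; here the fibers are canonically singular, and a version for that setting is not standard (also, nefness of $K$ alone does not preclude flops — what rigidifies the model is the polarization, which you do acknowledge by applying it to the pairs). The paper takes a different and more robust route: using condition \ref{object3}, it embeds each $X_i$ into a trivial projective bundle over $B$, chooses a generic hyperplane divisor $D_i$ so that $\bigl(X_{i,0},\tfrac1m D_{i,0}\bigr)$ is lc (Claim~\ref{claim:divisors}), applies inversion of adjunction to get a stable family of pairs, and then invokes Koll\'ar's separatedness theorem for functors of stable pairs. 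Your approach is attractive in its brevity, but as written it rests on a singular Matsusaka--Mumford that needs to be stated and justified; the paper's boundary-divisor reduction avoids that entirely.

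\textbf{Minor comparison on singularities.} Your route to openness of canonical singularities (relative index-one cover plus Elkik on rational Gorenstein singularities) differs from the paper's (Kawamata's deformation result for $\dim B=1$, then inversion of adjunction plus induction on $\dim B$). Your version is plausible, but note it needs the additional check that the formation of the relative index-one cover commutes with specialization, and that canonicity descends from the index-one cover to the quotient; both are standard but should be stated.
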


\begin{proof}
We first show that $\mfM^{[N]}$ is open (Definition \ref{def:openclosed}). 
This can be done by establishing the openness of each of Properties~\ref{object1}--\ref{object3}
in the following order, assuming that the special fiber $X_b$ is an object of $\mfM^{[N]}$. 
Using base change, we may assume that $B$ is nonsingular (which implies that $X$ is assumed to be 
normal).

\noindent 
\emph{Very ampleness:} Using the vanishing $H^i(X_b,\sL_b)=0$ from 
\cite[Thms.~1.2.17 and 1.7.8]{Laz04-I}, in the very ample case, it follows that 
the morphism $X\to \mathbb P_B (f_*\sL)$ arising from the canonical map 
$f^*f_* \sL \to \sL$ is an immersion along $X_b$ 
and thus an immersion over an open neighborhood of $b$. In particular each $\sL_{b'}$ is very ample
over this neighborhood.

\smallskip
\noindent
\emph{Degeneration of index and singularities:} By Claim~\ref{claim:Kol} we know that  
$\omega^{[N]}_{X/B}$ is
invertible near $X_b$. We also know that nearby fibers are all normal (in fact rational \cite{Elkik78}).
Therefore, by base change, we find that, for every $b'$ near $b$, we have
 $\omega_{X/B}^{[N]}|_{X_{b'}} \cong 
\omega_{X_{b'}}^{[N]}$, showing that the nearby fibers are of index $N$ too.

Now, the fact that $X$ has only canonical singularities near $X_b$
follows from \cite{Kawamata99}, when $\dim B=1$. When $\dim B=2$, we consider 
the normalization of a curve passing through $b$ and use 
inversion of adjunction, cf. \cite[Sect.~5.4]{KM98}. 
For higher dimensions we argue similarly using induction on $\dim B$.

\smallskip

\noindent
\emph{Global generation:} To show that semi-ampleness of the canonical divisor is open (openness of \ref{object2}), 
we note that $\omega_{X/B}^{[N]}$ is invertible and flat over a neighborhood of $b$ (Claim~\ref{claim:Kol}). 
Let $\nu$ be an integer for which $\omega_{X_b}^{[N]\cdot \nu}$ is globally generated.
According to
Takayama~\cite{tak07}, 
the function $b' \mapsto h^0(X_{b'}, \omega_{X_b'}^{[N]\cdot \nu})$ is constant over the open neighborhood of $b$ where 
each $X_{b'}$ has only canonical singularities. Therefore, by 
\cite[Cor.~12.9]{Ha77}, the natural map 

$$
f_*\omega^{[N]\cdot \nu}_{X/B} \otimes \bC(b)  \longrightarrow H^0(X_b, \omega^{[N]\cdot \nu}_{X_b})
$$
is an isomorphism in a neighborhood of $b$. On the other hand, the restriction map 
$H^0( X_b , \omega^{[N]\cdot \nu}_{X_b}  ) \otimes \sO_{X_b} \to \omega^{[N]\cdot \nu}_{X_b}$
is surjective. Therefore, using Nakayama's lemma, we find that the canonical 
map $f^* f_* \omega^{[N]\cdot \nu}_{X/B} \to \omega^{[N]\cdot \nu}_{X/B}$ is surjective 
along $X_b$. It follows that this map is surjective over a neighborhood of $b$, i.e. $\omega_{X/B}^{[N]\cdot \nu}$ 
is globally generated over this neighborhood. 


\smallskip

It remains to verify that $\mfM_h^{[N]}$ is separated. Let $R$ be a discrete valuation ring (DVR for short) and $K$ its field of fractions.
Define $B=\Spec(R)$ and consider two polarized families 

\begin{equation}\label{eq:families}
(f_1:  X_1 \longrightarrow B, \sL_1)  \; \; , \; \;  (f_2: X_2\longrightarrow B, \sL_2) \; \; \in \; \mfM^{[N]}(B),
\end{equation}
that are isomorphic (as families of polarized schemes) over $\Spec(K)$. Let us denote this
isomorphism by $\sigma^\circ: X_1^\circ \to X_2^\circ$, where $X_i^\circ$ denotes the restriction 
of the family $X_i$ to $\Spec(K)$, and, for every $b\in B$, define $X_{i,b}: = (X_i)_b$, $\sL_{i,b}=\sL_i|_{X_{i,b}}$, with $X_{i,0}$
denoting the special fiber. 
Using the two properties in Item~\ref{object3}, for $i=1,2$, as was shown in the very-ampleness case, 
we find that the natural morphisms 
$\psi_i :  X_i  \longrightarrow   \mathbb{P}_B \big(  (f_i)_* \sL_i \big)$ are embeddings over $B$
and  

$$
\sL_i \cong  \psi_i^*  \sO_{ \mathbb{P}_B \big((f_i)_* \sL_i\big) }  (1)  .
$$

\begin{claim}\label{claim:divisors}
In this context, using  the morphism 
$\psi_i: X_i\to \mathbb P_B \big(  (f_i)_* \sL_i \big)$, we can find Cartier divisors

$$
D_i \in |\sL_i | 
$$
such that near the special fibers $X_{i,0}$ we have:
\begin{enumerate}
\item\label{prop1} $D_i$ avoids the generic point of every fiber $X_{i,b}$, 
\item\label{prop3} $ D_1|_{X_1^0} =  (\sigma^\circ)^*  D_2$, and that, 
\item\label{prop2} for some integer $m$, $(X_{i,0}, \frac{1}{m} D_{i,0})$ is log-canonical (lc for short), 
where $D_{i,0}: = D_i |_{X_{i,0}}$.
\end{enumerate}

\end{claim}

\noindent
\emph{Proof of Claim~\ref{claim:divisors}.}
By Item~\ref{object3}, and using the fact that the Hilbert polynomial is fixed in the family, we have 
$h^0(X_{i,b}, \sL_{i,b}) = d+1$, for some $d\in \bN$. 
With $\mfM_h^{[N]}$ being bounded, we have 

$$
\xymatrix{
X_i  \ar[rr]^{\psi_i} \ar[dr]  &&  \mathbb P_B (f_i)_* \sL_i \ar[dl]   \ar[r]  &   \mathbb P^d\times \Hilb_h^d   \ar[d]^{\pr_2}  \\
&    B   \ar[rr] &&            \Hilb^d_h.
}
$$
Since $\psi_i$ is embedding over $B$ and $\mathbb P_{B} (f_i)_* \sL_i$ is fiberwise isomorphic to $\mathbb P^d$ 
we have $\mathbb P_B(f_i)_* \sL_i \cong \mathbb P^d\times B$.

Furthermore, the isomorphism $\sigma^0: (X_1^0, \sL_1|_{X_1^0}) \to (X_2^0, \sL_2|_{X_2^0})$ over $\Spec (K)$ 
naturally induces the $(\Spec K)$-isomorphism 

$$
\sigma^0_{\mathbb P}: \mathbb P_{\Spec K} (f_1)_*  \sL_1^0 \longrightarrow   \mathbb P_{\Spec K} (f_2)_* \sL_2^0
$$
and the resulting diagram 

\begin{equation}\label{HilbDiagram}
\xymatrix{
&  X_1  \ar[dl]_{f_1}  \ar@{-->}[dd]^{\sigma^0}  \ar[rr]^(0.4){\psi_1}  &&    \mathbb P_B (f_1)_* \sL_1 \ar@{-->}[dd]^{\sigma^0_{\mathbb P}}  
                                                                                                     \cong \mathbb P^d \times B  \ar[dr]_{\pr_1}   \\
B                                &&&&            \mathbb P^d                                \\
&  X_2   \ar[ul]^{f_2}     \ar[rr]^(0.4){\psi_2}  &&   \mathbb P_B (f_2)_* \sL_2        \cong    \mathbb P^2 \times B                         \ar[ur]^{\pr_1}  ,
}
\end{equation}
which commutes over $\Spec (K)$. Now, using this construction, including the fact that $\psi_i$ is fiber-wise embedding, 
for a general member $D\in | \sO_{\mathbb P^d}(1) |$, we can ensure that $D_i: = \psi_i^* (\pr_1^* D)$ 
is a divisor on $X_i$ that does not contain the generic point of $X_{i,0}$. 
Moreover, by the commutativity of (\ref{HilbDiagram}) we have $(\sigma^0)^* D_2 = D_1$. 
Finally, let $m$ be sufficiently large so that $(X_{i,0}, \frac{1}{m} D_{i,0})$ is lc.
This finishes the proof of the claim. \qed

\smallskip

Now, using Claim~\ref{claim:Kol}, by inversion of adjunction we find that $(X_i, \frac{1}{m}D_i + X_{i,0})$ is lc and thus, 
by specialization, so is $(X_{i,b}, \frac{1}{m}D_{i,b})$, for a general $b\in \Spec(K)$, where $D_{i,b}: = D_i |_{X_{i,b}}$.
Also, as $D_i$ is fiber-wise very amply, using nefness of $K_{X_i/B}$ we find that 
$K_{X_i/B}+ \frac{1}{m} D_i$ is fiber-wise ample so that, for each $i$, $(f_i: (X_i, \frac{1}{m} D_i) \to B)$ is a \emph{stable family of pairs}, 
cf.~\cite[Def--Thm.~4.7]{KolBook17}. 
 Now, thanks to the separatedness of functors 
of stable families of pairs 
\cite[Thm.~4.1]{KolBook17} over regular base schemes, 
the two families $\big(f_1: (X_1, \frac{1}{m}D_1) \to B\big)$ and $\big(f_2: (X_2, \frac{1}{m} D_2) \to B\big)$ are isomorphic over $B$
(as families of pairs), near the special fiber.
In particular we have 
$$
\big(f_1: X_1 \to B, \sL_1\big)  \sim \big(f_2: X_2 \to B, \sL_2\big) ,
$$
as required. 
\end{proof}







The following proposition is now a consequence of Proposition~\ref{prop:good} and a collection of well-known results 
in the literature. For the definition and basic properties of algebraic spaces we refer to \cite[Chapts.~1,2]{Knu71}.

\begin{proposition}\label{prop:functor}
The moduli functor $\mfM_h^{[N]}$ has an algebraic space of finite type $M_h^{[N]}$ as its
coarse moduli. 
\end{proposition}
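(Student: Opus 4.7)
The plan is to realize $\mfM_h^{[N]}$ as a quotient functor of a locally closed subscheme of a Hilbert scheme by the natural $\PGL$-action, and then invoke standard existence theorems for coarse moduli spaces of such quotients. The three inputs required are boundedness, local closedness, and separatedness of $\mfM_h^{[N]} \subseteq \mfM_h$, which are supplied, respectively, by Remark~\ref{rk:bounded} (built into the definition via condition~(\ref{object3})) and by Proposition~\ref{prop:good}.

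First I would set $d = h(1) - 1$ and note that condition~(\ref{object3}) forces every $(Y, L) \in \Ob(\mfM_h^{[N]})$ to embed in $\mathbb{P}^d$ via the complete linear system $|L|$ as a closed subscheme with Hilbert polynomial $h$. Applying local closedness of $\mfM_h^{[N]} \subseteq \mfM_h$ to the universal family over $\Hilb_h^d$ then produces a $\PGL_{d+1}$-invariant locally closed subscheme $H \subseteq \Hilb_h^d$ with the property that a morphism $B \to \Hilb_h^d$ factors through $H$ if and only if the induced family, polarized by the restriction of $\sO_{\mathbb{P}^d}(1)$ to its fibers, defines an object of $\mfM_h^{[N]}(B)$. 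This identifies $\mfM_h^{[N]}$ with the quotient functor $H/\PGL_{d+1}$.

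Next I would construct $M_h^{[N]}$ as the geometric quotient $H/\PGL_{d+1}$ in the category of algebraic spaces. This exists as soon as the $\PGL_{d+1}$-action on $H$ is proper and has finite stabilizers. Finite stabilizers are clear: since $L$ is very ample and $\omega_Y$ is semi-ample, the automorphism group of any $(Y,L) \in \Ob(\mfM_h^{[N]})$ is finite, and such automorphisms are realized as elements of $\PGL_{d+1}$ fixing the corresponding point of $H$. Properness of the action is equivalent, via the valuative criterion, to separatedness of $\mfM_h^{[N]}$ as established in Proposition~\ref{prop:good}: two $R$-points of $H$ whose restrictions to the generic point are $\PGL_{d+1}(K)$-conjugate correspond to two objects of $\mfM_h^{[N]}(\Spec R)$ whose generic fibers are isomorphic as polarized schemes; separatedness forces these families to be equivalent in $\mfM_h^{[N]}$, and the equivalence lifts to an element of $\PGL_{d+1}(R)$ reducing to the prescribed one at the generic point. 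Existence of the quotient algebraic space of finite type then follows from standard machinery, e.g.~the constructions of~\cite{Viehweg95} or the Keel--Mori theorem.

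The only step that is not formal is the passage from separatedness of the functor to properness of the $\PGL_{d+1}$-action, but this is precisely what Proposition~\ref{prop:good} delivers; the remainder is routine in the framework of~\cite{Viehweg95}.
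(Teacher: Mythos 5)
Your proposal follows essentially the same route as the paper: realize $\mfM_h^{[N]}$ as the quotient of a locally closed $\PGL_{d+1}$-invariant subscheme $H^u\subseteq\Hilb^d_h$ (supplied by Proposition~\ref{prop:good}), prove properness of the $\PGL_{d+1}$-action via the valuative criterion and separatedness, and then invoke Koll\'ar/Keel--Mori/Viehweg to obtain the coarse algebraic space. The one real difference is how finite stabilizers are obtained. You declare them ``clear'' from very ampleness of $L$ and semi-ampleness of $\omega_Y$; that is in fact true (since $\omega_Y$ semi-ample forces $\kappa(Y)\geq 0$, so by Matsumura--Nishi $\mathrm{Aut}^0(Y)$ is an abelian variety, while $\mathrm{Aut}(Y,L)$ is a closed subgroup of $\PGL$ and hence affine, so $\mathrm{Aut}^0(Y,L)$ is trivial and $\mathrm{Aut}(Y,L)$ is finite), but it is not immediate and you should supply this argument or a reference. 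The paper avoids this entirely: it only proves properness of the $\PGL_{d+1}$-action, then notes that the induced $\SL(d+1)$-action is also proper (since $\SL\to\PGL$ is finite), and since $\SL$ is affine, proper stabilizers in $\SL$ must be finite --- the finiteness of $\mathrm{Aut}(Y,L)$ becomes a corollary (Remark~\ref{finiteAut}) rather than a prerequisite. Your version is correct once the Matsumura--Nishi input is spelled out, but the paper's ordering is cleaner because it does not require an independent finiteness statement for automorphism groups.
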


\begin{proof}
Using Item~\ref{object3} for every $(Y,L) \in \Ob (\mfM^{[N]}_h)$ we have $h^0(L)=d+1$, for some $d\in \bN$.
Set $\mathcal X' \subset \mathbb P^d \times \Hilb^d_h$ to be the universal object with 

$$
\xymatrix{
\mathcal X'  \ar[dr] \ar@{^{(}->}[r] &  \mathbb P^d \times \Hilb_h^d  \ar[d]^{\pr_2}  \ar[r]^(0.7){\pr_1}  &   \mathbb P^d . \\
&   \Hilb_h^d
}
$$
We note that by Proposition~\ref{prop:good} $\mathfrak M_h$ is bounded and locally closed, 
with $L$ being very ample for 
every $(Y,L) \in \mathfrak M^{[N]}$. Therefore, there is a subscheme $H^u\subset \Hilb^d_h$ such that 
$\mathcal X'$ restricts to the universal family for the associated Hilbert functor of 
embedded schemes in $\mathfrak M_h^{[N]}$ 

$$
\big(   f_{\mathcal X}: \mathcal X \to H^u, \underbrace{(\xi)^* \pr_1^* \sO_{\mathbb P^d}(1)}_{\sL_{\mathcal X}}  \big)  \in \mathfrak M_h^{[N]} (H^u)
$$
via $\xi: \mathcal X \hookrightarrow \mathcal X'$ cf.~\cite[Sect.~1.7]{Viehweg95}.
Now, as $H^u$ is naturally equipped with the action of $\mathbb PG: = \PGL(d+1, \bC)$, following~\cite{Viehweg95}, 
we need to show that the quotient of $H^u$  by $\mathbb PG$ 
is a geometric categorical quotient (see \cite[Def.~2.7]{Kol97} or \cite[Def.~1.8]{KM97} for the definition).
Thanks to \cite[Thm.~1.5]{Kol97}, \cite[Cor.~1.2]{KM97} and \cite[Sect.~7.2]{Viehweg95} it suffices to
establish the following claim.

\begin{claim}\label{claim:proper}
The action $\overline \sigma$ of $\mathbb PG$ on $H^u$ is proper, that is 
the morphism 
$$
\overline \psi: = (\overline \sigma, \pr_2): \mathbb PG \times H^u \longrightarrow  H^u \times H^u 
$$
is proper. Consequently, the action of $G:=\SL(d+1,\bC)$ on $H^u$ is proper with finite stabilizers. 
\end{claim}

\noindent
\emph{Proof of Claim~\ref{claim:proper}.} We follow the arguments of \cite[Lem.~7.6]{Viehweg95}. 
We recall that by the valuative criterion (\cite[Lem.~2.4]{Kol97}) it suffices to show that  
for every DVR $R$, with field of fractions $K$, $B= \Spec(R)$, and any 
commutative diagram 

\begin{equation}
\label{eq:lifting}
\xymatrix{
\Spec (K)  \ar[rr]^{\delta} \ar@{^{(}->}[d]   &&   \mathbb PG\times H^u   \ar[d]^{\overline\psi} \\
B  \ar[rr]^{\tau}  &&   H^u\times H^u  ,
}
\end{equation}
there is an extension $\overline \delta: B \to \mathbb PG\times H$ such that $\overline \delta|_{\Spec K}= \delta$
and $\tau = \overline\psi\circ \overline \delta$. To do so we consider the two families 

$$
(f_i: X_i \to B  , \sL_i) \in \mathfrak M_h^{[N]}(B) \; \; , \;\; \text{$i=1,2$}, 
$$
defined by the pullback of the universal family $\mathcal X\to H^u$ via $\pr_i\circ \tau$.
From (\ref{eq:lifting}) it follows that there is a commutative diagram 

$$
\xymatrix{
&  \mathbb P^d \times H^u  \ar[dl]  \ar[dr]^{\pr_2}  \\
H^u   &&  \ar[ll]_{\overline \sigma} H^u  \\
&  \ar[ul]^{\pr_1\circ \tau} \Spec (K)  \ar[ur]_{\pr_2\circ \tau}  , 
}
$$
which gives a $\Spec(K)$-isomorphism $\phi: X_1\to X_2$ and a sheaf isomorphism over $\Spec (K)$
$\phi^* \sL_2|_{\Spec (K)}  \longrightarrow  \sL_1|_{\Spec (K)}$. 
As $\mathfrak M^{[N]}_h$ is separated, both extend to isomorphisms over $B$, that is we have 
an isomorphism $\overline\phi: X_1\to X_2$, extending $\phi$, and an isomorphism 
\begin{equation}\label{eq:sheafy}
\phi': (\overline\phi)^* \sL_2  \otimes f_1^* \sB  \longrightarrow  \sL_1  ,
\end{equation}
for some line bundle $\sB$ on $B$.
We note that, similar to the proof of Claim~\ref{claim:divisors}, we 
have natural $B$-isomorphisms 

$$
\rho_i  :  \mathbb P_B \big( (f_i)_*  \sL_i \big)  \longrightarrow   \mathbb P^d \times B .
$$
On the other hand, (\ref{eq:sheafy}) naturally induces the $B$-isomorphism 

$$
\phi'': \mathbb P_B  ((f_1)_* \sL_1 )  \longrightarrow \mathbb P_B ((f_2)_* \sL_2).
$$
The extension $\overline \delta = (\overline \delta_1, \overline \delta_2)$ of $\delta$ can now be 
defined by 

$$
\xymatrix{
b \ar[r]^(0.38){\overline \delta_1} &   \Big( \mathbb P^d\times \{ b \} \ar@/^8mm/[rrr] \ar[r]^{\rho_1^{-1}} & \big[ \mathbb P_B  ((f_1)_* \sL_1 )  \big]_b   
\ar[r]^{\phi''} &  \big[ \mathbb P_B  ((f_2)_* \sL_2 )  \big]_b \ar[r]^{\rho_2}  &    \mathbb P^d \times \{  b\}  \Big)        ,                                                                                                                                     
}
$$
and $\overline \delta_2 = \pr_2\circ \tau$. 

Now, for the second assertion of the claim, using properness of $\overline \psi$, as $G$ is finite over $\mathbb PG$, the composition 

$$
\xymatrix{
\psi: G \times H^u  \ar[r] & \mathbb PG \times H^u \ar[r]^{\overline \psi}  &   H^u \times H^u 
}
$$
is proper. In particular we have properness of the fiber over $\{x\}\times H^u$, which forms the stabilizer of $x$ in $G$. 
Therefore, with $G$ being affine, the stabilizer of $x$ must be finite.  
\end{proof}

\begin{remark}\label{finiteAut}
We note that Claim~\ref{claim:proper} means that for every polarized scheme 
$(Y,L) = \mathfrak M^{[N]}_h(\Spec (\bC))$ 
the group of polarized automorphisms $\mathrm{Aut}(Y,L)$ is finite. 
\end{remark}

\subsection{Connection to the minimal model program} 
As we will see later in Section~\ref{sect:Section4-MainThm},
for a smooth family of projective manifolds with good minimal
models, it is quite useful to have an associated birationally-parametrizing space.

Given two quasi-projective varieties $U$ and $V$, with $V$ being smooth, we say 
$U$ is a \emph{family of good minimal models over $V$}, 
if there is a projective morphism 
$f_{U}: U\to V$ with connected fibers and an integer $N\in \mathbb N$ such that, 
for every $v\in V$, $U_v$ has 
only canonical singularities 
and that the reflexive sheaf 
$\omega_{U/V}^{[N]}$ is invertible and $f_U$-semi-ample
(Definition~\ref{def:relsemi}). 
We sometimes refer to $f_{U}$ as a \emph{relative good minimal model}.

\begin{theorem}[$=$Theorem~\ref{thm:functor}]
\label{functor2}
Let $U''$ be a family of good minimal models over $V''$ via the flat projective morphism $f'': U''\to V''$.
There is a very ample line bundle 
$\sL''$ on $U''$ (not unique) and a polynomial $h$ such that 
$( f'': U'' \to V'', \sL'' ) \in \mfM_h^{[N]}(V'')$.
\end{theorem}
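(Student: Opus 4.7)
The plan is to observe that most of the data needed to land in $\mfM^{[N]}$ is already present in the hypotheses, and to reduce the construction of the polarization to a standard application of relative Serre vanishing.

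First I would check that conditions~(\ref{object1}) and~(\ref{object2}) in the definition of $\mfM^{[N]}$ are automatic. Each fiber $U''_v$ has canonical singularities by assumption, while the invertible, $f''$-semi-ample sheaf $\omega_{U''/V''}^{[N]}$ restricts on every fiber to $\omega_{U''_v}^{[N]}$: invertibility of the restriction is immediate; semi-ampleness on fibers follows from $f''$-semi-ampleness on $U''$ together with cohomology and base change (applied to a power for which $f''^* f''_* (\omega_{U''/V''}^{[N]})^k \twoheadrightarrow (\omega_{U''/V''}^{[N]})^k$). Consequently the entire content of the theorem reduces to producing a line bundle $\sL''$ on $U''$ whose every positive power is fiber-wise very ample with vanishing higher cohomology on each fiber, together with verifying that the fiber Hilbert polynomial is constant.

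For the construction, I would take any very ample line bundle $\sL_0$ on the quasi-projective scheme $U''$; it is in particular $f''$-ample. Grothendieck's relative Serre vanishing and very-ampleness theorem (see e.g.~\cite[Thm.~1.7.8]{Laz04-I}) applied to the flat projective morphism $f''$ and the $f''$-ample $\sL_0$ then yields an integer $m_0 \in \bN$ such that, for every $m \geq m_0$, the sheaf $\sL_0^m$ is $f''$-very ample and $R^i f''_* \sL_0^m = 0$ for all $i > 0$. I would set $\sL'' := \sL_0^{m_0}$; this remains very ample on $U''$ as a power of $\sL_0$.

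Finally, condition~(\ref{object3}) is verified by observing that for any $a \geq 1$ we have $(\sL'')^a = \sL_0^{m_0 a}$ with $m_0 a \geq m_0$, so both the relative very ampleness and the sheaf vanishing persist. Restricting the induced $V''$-embedding $U'' \hookrightarrow \mathbb{P}_{V''}\bigl( f''_* (\sL'')^a \bigr)$ to the fiber over $v$ shows that $(\sL''|_{U''_v})^a$ is very ample on $U''_v$, while cohomology and base change applied to the $V''$-flat line bundle $(\sL'')^a$ translates $R^i f''_* (\sL'')^a = 0$ into $H^i(U''_v, (\sL''|_{U''_v})^a) = 0$ for all $v$ and all $i > 0$. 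Flatness of $f''$ makes $t \mapsto \chi\bigl(U''_v, (\sL''|_{U''_v})^{\otimes t}\bigr)$ locally constant in $v$, giving a fixed polynomial $h$ on each connected component of $V''$, and hence $(f'', \sL'') \in \mfM_h^{[N]}(V'')$. I do not anticipate a serious obstacle; the only mild delicacy is the need for uniform control over all exponents $a \geq 1$ demanded in~(\ref{object3}), which is absorbed into the single choice of $m_0$.
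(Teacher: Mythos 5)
Your proof is correct and takes essentially the same approach as the paper: both observe that conditions (\ref{object1})--(\ref{object2}) are built into the definition of a family of good minimal models, then raise an arbitrary very ample line bundle on $U''$ to a single uniform power so that condition (\ref{object3}) holds fiber-wise. The paper phrases the uniformity step via flatness, constancy of the Hilbert polynomial, and semicontinuity, whereas you cite Grothendieck's relative Serre vanishing and relative very-ampleness directly; over the Noetherian base $V''$ these are the same argument.
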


\begin{proof} 
We only need to check the existence of $\sL''$ satisfying \ref{object3}.
Let $\sL$ be a very ample line bundle on $U''$. From the flatness assumption on $f''$ we know 
that $\chi(\sL|_{U''_v})$ is constant for all $v\in V''$.
Therefore, by semicontinuity \cite[Thm.~12.8]{Ha77}, 
for a sufficiently large $a\in \bN$, 
the line bundle $\sL^m$ restricted to each fiber verifies \ref{object3}, for every $m\geq a$. 
That is, for $\sL'':= \sL^a$, we have $H^i( U''_v, ( \sL''_{U''_v} )^b ) =0$, for all $b\geq 1$
and $v\in V$. 
\end{proof}

\begin{remark}
Following the proof of Theorem~\ref{functor2}, we note that if we replace the 
line bundle $\sL''$ by $(\sL'')^m$, for any $m\geq 1$, the conclusions of the theorem 
are still valid.  
\end{remark}

Our next aim is to show that for a suitable choice of a invertible sheaf $\sL$ we can  
ensure that the dimension of subspaces of $M^{[N]}_h$ are closely related to the 
variation of families mapping to them (see Theorem~\ref{VarModuli} below).

\begin{set-up}\label{KawSetup}
Let $f_{U'}: U'\to V$ be a relative good minimal model.
According to \cite[Lem.~7.1]{Kawamata85} there are smooth quasi-projective varieties $\overline V$ and $V''$, 
a surjective morphism $\rho: \overline V \to V''$ and a surjective, generically finite morphism 
$\sigma: \overline V\to V$ with a projective morphisms $f'': U''\to V''$:

\begin{equation}\label{diag:1}
\xymatrix{
U' \ar[d]^{f_{U'}} &&&         &&  U'' \ar[d]^{f''}    \\
V        &&&         \overline{V}  \ar[lll]_{\sigma}^{\text{generically finite}}  \ar[rr]^{\rho} &&   V''   , 
}
\end{equation}
satisfying the following properties. 

\begin{enumerate}
\item \label{Kaw1} Over an open subset $\overline V^\circ\subseteq \overline V$ the morphism $\sigma$ is finite and \'etale. 
\item \label{Kaw2} We have 
$$
 \overline U^\circ : =U'' \times_{V''} \overline{V}^\circ  \cong   U' \times_V  \overline{V}^\circ,
$$
with $\rho':\overline U^\circ \to U''$ and $\sigma': \overline U^\circ \to U'$ being the natural projections.
 
\item \label{Kaw3} For every $t \in \overline V^\circ$ the kernel of $(d_t\rho \circ  d_t\sigma^{-1})$ coincides with the kernel of 
the Kodaira-Spencer map for $f_{U'}: U'\to V$ at $u= \sigma(t)$, where 
$d_t \rho$ and $d_t \sigma$ are the differentials of $\rho$ and $\sigma$.
\end{enumerate}

\begin{theorem-def}[\protect{\cite[Lem.~7.1, Thm.~7.2]{Kawamata85}}]
\label{def:VAR}
For every family of good minimal models $f_{U'}: U'\to V$, the algebraic
closure $K:= \overline{\bC(V'')}$ is the (unique) minimal closed field of definition for $f_{U'}$, that is 
$\Var(f_{U'}) = \dim V''$.
\end{theorem-def}
We note that, as $\Var(\cdot)$ is a birational invariant, for any projective family $f_U:U\to V$ that is birational to a
relative good minimal 
model $U'$ over $V$, we have $\Var(f_U)= \dim V''$. 
\end{set-up}

One can observe that \cite[Lem.~7.1, Thm.~7.2]{Kawamata85} in particular implies that, 
for families of good minimal models, variation is measured at least generically (over the base) 
by the Kodaira--Spencer map. 
Of course this property fails in the absence of the good minimal model assumption
(for example one can construct a smooth projective family of non-minimal varieties of 
general type with zero variation and generically injective Kodaira--Spencer map).
For future reference, we emphasize and slightly extend this point in the following observation. 

\begin{observation}\label{OBSERVE}
We will work in the the situation of Set-up~\ref{KawSetup}.
\begin{enumerate}
\item\label{item:trivial} For every smooth subvariety $T\subseteq \overline{V}^0$, with $\rho(T)$ being a closed point, 
the family $\overline{U}_T^0 \to T$ is trivial. In particular, if $\Var(f_{U'}) =0$, then $f_{U'}$ is generically (over V) isotrivial.
\item\label{item:trivialPol} For every $T\subseteq \overline{V}^0$ as in Item~\ref{item:trivial} 
and line bundle $\sL''$ on $U''$, the polarized family $\big(  \overline{U}^0_T\to T, (\rho')^* \sL''  \big)$ is trivial. 
\end{enumerate}
To see this, we may assume that $\overline V^0 = \overline V$. Set $v'':= \rho(T) \in V''$. 
By the assumption we have 
$$
\overline U_T \cong T \times_{\bC} F 
$$
where $F:= U''_{v''}$ (which shows Item~\ref{item:trivial}). Thus, over $T$, $\rho'$ coincides with 
the natural projection $\pr_2: T\times_{\bC} F\to F$. Clearly, $(T \times_{\bC} F, \pr_2^*\sL''_{v''})$ is trivial. 
\end{observation}

\begin{theorem}\label{VarModuli}
In the setting of Set-up~\ref{KawSetup}, over an open subset $V_{\eta}$ of $V$
there is a line bundle $\sL$ (not unique) such that $(f_{U'}: U'_{\eta} \to V_{\eta} , \sL ) \in \mfM_h^{[N]}(V_{\eta})$, 
with the induced morphism $\mu_{V_{\eta}}: V_{\eta} \to M_h^{[N]}$ verifying the 
equality  
\begin{equation}\label{eq:VarEq}
\Var(f_{U'}) = \dim \big( \Im(\mu_{V_{\eta}}) \big).
\end{equation}
In particular, any relative good minimal model $f_{U'}: U'\to V$ of any smooth family $f_U: U\to V$
of projective varieties with good minimal model gives rise to a morphism of this form.
\end{theorem}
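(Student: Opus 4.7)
The plan is to apply Theorem~\ref{functor2} to produce polarizations on both $f_{U'}$ (after restricting to a flat locus) and on the Kawamata family $f'': U'' \to V''$, and then to compare the two classifying morphisms through the diagram of Set-up~\ref{KawSetup}. Restricting $V$ to the open subset $V_\eta$ where $f_{U'}$ is flat, Theorem~\ref{functor2} produces a very ample line bundle $\sL$ on $U'_\eta$ with $(f_{U'}: U'_\eta \to V_\eta, \sL) \in \mfM_h^{[N]}(V_\eta)$, and hence an induced classifying morphism $\mu_{V_\eta}: V_\eta \to M_h^{[N]}$. Similarly, applying Theorem~\ref{functor2} to $f''$ yields a polarization $\sL''$ on $U''$ and a morphism $\mu_{V''}: V'' \to M_h^{[N]}$; after replacing $\sL$ and $\sL''$ with suitable powers, the Hilbert polynomials can be arranged to coincide.

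For the inequality $\dim \Im(\mu_{V_\eta}) \geq \Var(f_{U'})$, I would use that any polarized isomorphism of fibers forces an unpolarized one, so each fiber of $\mu_{V_\eta}$ through a general point $v \in V_\eta$ sits inside the locus $\{v' \in V_\eta : U'_{v'} \cong U'_v\}$. By property~\ref{Kaw2} of Set-up~\ref{KawSetup}, for any $t \in \sigma^{-1}(v)$ this locus contains $\sigma(\rho^{-1}(\rho(t)))$, which has dimension $\dim \overline{V} - \dim V'' = \dim V - \Var(f_{U'})$. Hence fibers of $\mu_{V_\eta}$ have codimension at least $\Var(f_{U'})$ in $V_\eta$, yielding the desired inequality.

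For the reverse inequality $\dim \Im(\mu_{V_\eta}) \leq \Var(f_{U'}) = \dim V''$, I would first show that $\mu_{V''}$ is generically finite onto its image---so that $\dim \Im(\mu_{V''}) = \dim V''$---using Remark~\ref{finiteAut} (finiteness of polarized automorphism groups) together with the equality $\Var(f'') = \dim V''$, which itself follows from Theorem-Definition~\ref{def:VAR} because any strictly smaller field of definition of $f''$ would, via pullback by $\rho$, produce a strictly smaller field of definition of $f_{U'}$. Comparing the two morphisms on $\overline{V}^\circ$, both $\mu_{V_\eta} \circ \sigma$ and $\mu_{V''} \circ \rho$ classify polarized families with underlying unpolarized family $\overline{f}^\circ : \overline{U}^\circ \to \overline{V}^\circ$. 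Choosing $\sL$ to be the descent of $(\rho')^* \sL''$ along the finite étale cover $\sigma'$, so that $(\sigma')^* \sL \cong (\rho')^* \sL''$ up to a twist pulled back from $\overline{V}^\circ$, one obtains $\mu_{V_\eta} \circ \sigma = \mu_{V''} \circ \rho$, and concludes $\dim \Im(\mu_{V_\eta}) = \dim \Im(\mu_{V''}) = \dim V''$.

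The principal obstacle is carrying out the descent of $(\rho')^* \sL''$ along $\sigma'$: this line bundle does not \emph{a priori} come equipped with a descent datum, and producing one requires combining the boundedness of $\mfM_h^{[N]}$ with Remark~\ref{finiteAut}, for instance by replacing $\sL''$ with a sufficiently high power so that the obstructions to descent live in a finite group, which is then absorbed into the equivalence $\sim$ defining the moduli functor (twist by a line bundle pulled back from the base). Once this descent is secured, the rest of the argument is essentially formal from the structure of Kawamata's construction and the generic finiteness of $\mu_{V''}$.
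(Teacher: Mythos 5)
Your overall strategy---descend a polarization from $U''$ to $U'$ through the Kawamata diagram, then compare the two classifying morphisms---matches the paper's. But there are two places where the proposal either goes wrong or silently leans on a nontrivial missing ingredient.

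First, the argument for $\dim \Im(\mu_{V_\eta}) \geq \Var(f_{U'})$ is logically inverted. You observe that the fiber of $\mu_{V_\eta}$ through $v$ is \emph{contained in} the unpolarized isomorphism locus $\{v' : U'_{v'} \cong U'_v\}$, and that this locus \emph{contains} $\sigma(\rho^{-1}(\rho(t)))$, which has dimension $\dim V - \Var(f_{U'})$. From $\text{fiber} \subseteq \text{locus}$ and $\text{locus} \supseteq X$ with $\dim X = d$ you cannot deduce $\dim(\text{fiber}) \leq d$; that would require the locus to have dimension \emph{at most} $d$, i.e.\ to essentially equal $\sigma(\rho^{-1}(\rho(t)))$. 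The Kawamata properties only control the Kodaira--Spencer kernel (an infinitesimal statement), not the global isomorphism locus, which a priori could be larger. The paper proves this direction (Claim~\ref{claim:dimension}) by a contradiction argument that invokes Koll\'ar's theorem~\cite[Cor.~2.9]{Kollar87} relating variation to the dimension of the image in moduli for families of non-negative Kodaira dimension; some input of this kind is genuinely needed and is absent from your sketch. Your alternative route via generic finiteness of $\mu_{V''}$ has the same hole: finiteness of polarized automorphism groups (Remark~\ref{finiteAut}) bounds the stabilizers, but it does not prevent $\mu_{V''}$ from contracting positive-dimensional subvarieties of $V''$. You would still need a Koll\'ar-type statement to rule that out.

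Second, the descent of the polarization. You correctly identify that $(\rho')^*\sL''$ has no a priori descent datum along the \'etale cover $\sigma'$, but the remedy you propose---pass to a high power so that the obstruction ``lives in a finite group'' and is absorbed into the equivalence $\sim$---is not a valid construction: taking powers does not produce a $G$-linearization and the obstruction to descent is not generally killed this way. The paper instead takes the Galois ``norm'' $\bigotimes_{g\in G} g^*\sL''_{\overline V}$, which \emph{is} canonically $G$-equivariant (and, since $\sigma'$ is \'etale with trivial stabilizers, descends by~\cite[Thm.~4.2.15]{MR2665168}). It then replaces $\sL''$ by $(\sL'')^{|G|}$ so the Hilbert polynomials on the two sides agree. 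This is the concrete mechanism your sketch is missing, and it cannot be replaced by merely invoking boundedness and finite automorphisms.

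In summary: the skeleton of the argument is the same as the paper's, but the two load-bearing steps (the dimension lower bound and the descent of $\sL''$) are each carried out incorrectly or left at the level of unsupported assertion. The essential missing tool is Koll\'ar's variation bound; the essential missing construction is the Galois norm.
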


\begin{proof}
We start by considering Diagram~\ref{diag:1}. 
In Set-up~\ref{KawSetup} we may assume that $\overline V= \overline V^\circ$. 
Denote $U'\times_V \overline V$ by $\overline U$ and set $\overline f: \overline U \to \overline V$ to be the pullback family.
Using Item~\ref{Kaw2}, generically, the morphism $f''$ is a family of good minimal models (see also 
the global generation case in the proof of Proposition~\ref{prop:good}), that is  
after replacing $V$ by an open subset $V_{\eta}$ we can assume that $f''$ is a relative good minimal model and flat. 
Let $\sL''$ 
be a choice of line bundle as in the proof of Theorem~\ref{functor2} so that 
$(f'': U''\to V'', \sL'') \in \mfM_h^{[N]}(V'')$.
We may assume that $\sigma$ is Galois, noting that if $\sigma$ is not Galois, we can replace 
it by its Galois closure and replace $\rho$ by the naturally induced map.
Define $G:= \Gal(\overline V/V_{\eta})$.

Now, we define $\sL''_{\overline V} := (\rho')^*\sL''$ and consider the $G$-sheaf 
$\bigotimes_{g\in G} g^*\sL''_{\overline V} \cong(\sL''_{\overline V})^{|G|}$ 
(see for example \cite[Def.~4.2.5]{MR2665168} for the definition). 
As $\sigma'$ is \'etale, the stabilizer of any point $\overline u\in \overline U$ 
is trivial (and thus so is its action on the fibers). Consequently, the above $G$-sheaf 
descends~\cite[Thm.~4.2.15]{MR2665168}. 
That is, there is a line bundle $\sL$ on $U'_{V_{\eta}}$ 
such that 
$$
(\sigma')^* \sL \cong \bigotimes_{g\in G} g^* \sL''_{\overline V} .
$$
Therefore, we have 
$$
(f': U'_{V_{\eta}} \to V_{\eta} , \sL) \in \mfM_h^{[N]}(V_{\eta}). 
$$
After replacing $\sL''$ by $(\sL'')^{|G|}$, so that $(\rho')^*\sL'' =(\sL''_{\overline V})^{|G|}$, 
we can ensure that the Hilbert polynomial of $\overline f$ with respect to $(\sL''_{\overline V})^{|G|}$ 
is equal to the one for $f''$ with respect to $\sL''$. 
Let $\mu_{V_{\eta}}: V_{\eta}\to M_h^{[N]}$ denote the induced 
moduli map. 

Our aim is now to establish the equality (\ref{eq:VarEq}). 
To this end, set $W$ to be the image of 
$V_{\eta}$ under $\mu_{V_{\eta}}$.

\begin{claim}\label{claim:dimension}
$\dim W  \geq \dim V''$.
\end{claim}

\noindent
\emph{Proof of Claim~\ref{claim:dimension}.}
Assume that instead $\dim W < \dim V''$.
Let $T\subseteq V_{\eta}$ be a subscheme whose pre-image under $\sigma$ is
generically finite over $V''$. This implies that $\dim T = \dim V''$
and that the variation of the induced family over $T$ defined by pullback
of $f''$ is maximal (see Item~\ref{Kaw3}). 
Now, by comparing the dimensions, we see that 
$$
\dim\big( \mu_{V_{\eta}}(T)   \big)  <   \dim T.
$$
But this contradicts the fact that the induced family over $T$ has 
maximal variation.
This can be seen as a consequence of Koll\'ar's result ~\cite[Cor.~2.9]{Kollar87}
for families of varieties with non-negative Kodaira dimension. \qed 

Now, let $Z\subseteq V_{\eta}$ be a subscheme that is generically finite and dominant over $W$.
By construction, the induced moduli map $\mu_{\overline V}: \overline V\to M_h^{[N]}$ 
associated to $(\overline f: \overline U\to \overline V, {\sL''_{\overline V}}^{|G|})$
factors through $\mu_{V_{\eta}}$. 
Therefore, $\sigma^{-1}(Z)$ is also generically finite over $W$. 

\begin{claim}\label{SecondClaim}
$\sigma^{-1}(Z)$ is generically finite over $V''$ and thus $\dim W\leq \dim V''$. 
\end{claim}

\noindent
\emph{Proof of Claim~\ref{SecondClaim}.} If $\rho|_{\sigma^{-1}Z}: \sigma^{-1}Z\to V''$ is not generically finite, then 
for each irreducible (positive dimensional) general fiber $T\subseteq \sigma^{-1}(Z)$ mapping to a smooth closed point $v''$
Observation~\ref{OBSERVE} (Item~\ref{item:trivialPol}) applies. 
Therefore, as the above choice of the polarization for the family defined by $\overline f$ is pullback 
of the one fixed for $f''$ via $\rho'$,
the family $(\overline U_T, (\sL''_{\overline V})^{|G|} = (\rho')^*\sL'')$
is locally trivial as polarized schemes. 
Thus, by the construction of $\mu_{\overline V}$, 
the general fiber of $\sigma^{-1}(Z) \to V''$ must be contracted by $\mu_{\overline V}$, contradicting the 
generic finiteness of $\mu_{\overline V}|_{\sigma^{-1}(Z)}$. \qed


The first half of the theorem now follows from Claims~\ref{claim:dimension} and~\ref{SecondClaim}.

To see that every smooth projective family $f_U:U\to V$ of varieties admitting a good minimal model 
leads to a moduli morphism $\mu_{V_{\eta}}: V_{\eta}\to M_h^{[N]}$ as above, for some $N\in \bN$, 
using the first half of the theorem, 
it suffices to know that $f$ has a relative good 
minimal model $f_{U'}: U'\to V$. But this is guaranteed, for example by \cite[1.2, 1.4]{HMX18}. 
\end{proof}

\begin{notation}[replacing $M^{[N]}_h$ by an \'etale covering]
\label{not:cover}
Let $M \to M^{[N]}_h$ be an \'etale covering, with $M$ being a finite type scheme, cf. \cite[Chapt.~2]{Knu71} 
(see also \cite[pp.~279--280]{Viehweg95}). 
Set $\mu'_{V_{\eta}}: V_{\eta}\to M$ to be the finite type morphism of schemes representing 
$\mu_{V_{\eta}}$ in this \'etale covering. 
Let $M^0\subseteq M$ be an affine subscheme containing the generic point of 
$\Im(\mu'_{V_{\eta}})$. After replacing $V_{\eta}$ by $V^0_{\eta}:= (\mu'_{V_{\eta}})^{-1}M^0$
we thus have a finite type morphism 

$$
\mu'_{V_{\eta}}: V^0_{\eta} \longrightarrow  M^0 
$$
of quasi-projective schemes. 
By abuse of notation, from now on we will denote $V^0_{\eta}$, $M^0$ by $V_{\eta}$ and $M^{[N]}_h$, 
respectively, that is $M^{[N]}_h$ is quasi-projective and $\mu_{V_{\eta}}: V_{\eta}\to M^{[N]}_h$ 
is the induced moduli map. 
\end{notation}

\begin{corollary}\label{cor:FinalModuli}
Let $f:X\to B$ be a smooth comapctification of a smooth projective family $f_U: U\to V$, whose 
fibres admit good minimal models. Then, depending on a choice of a relative good minimal model 
for $f_U$ there is a polarization $\sL$ as in Theorem~\ref{VarModuli} and, following the notation introduced in
Notation~\ref{not:cover}, 
there is a rational 
moduli map $\mu_{V_{\eta}}: B \dashrightarrow \overline M_h^{[N]}$, where 
$\overline M_h^{[N]}$ is a compactification of $M_h^{[N]}$ by a projective scheme. 
Moreover, we have $\dim (\Im \mu_{V_{\eta}}) = \Var(f_U)$. 
\end{corollary}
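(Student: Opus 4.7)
The plan is to assemble the corollary from Theorem~\ref{VarModuli} together with a standard compactification step. First I would invoke the existence of a relative good minimal model for the smooth family $f_U: U \to V$ (guaranteed by the Hacon--McKernan--Xu result cited in the proof of Theorem~\ref{VarModuli}) to produce $f_{U'}: U' \to V$. With this in hand, Theorem~\ref{VarModuli} directly gives an open subset $V_\eta \subseteq V$, a polarization $\sL$ on $U'_\eta$, and a moduli morphism $\mu_{V_\eta}: V_\eta \to M_h^{[N]}$ satisfying
$$
\dim\bigl(\Im(\mu_{V_\eta})\bigr) = \Var(f_{U'}).
$$
After the passage described in Notation~\ref{not:cover}, both $V_\eta$ and $M_h^{[N]}$ are quasi-projective, and $\mu_{V_\eta}$ is a morphism of quasi-projective schemes.

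Next comes the compactification step. Since $M_h^{[N]}$ is quasi-projective (by Notation~\ref{not:cover}), it embeds as an open subscheme of a projective scheme $\overline M_h^{[N]}$ (take the closure inside some ambient $\mathbb P^N$ in which it is locally closed). Composing $\mu_{V_\eta}$ with this open immersion yields a morphism $V_\eta \to \overline M_h^{[N]}$; since $V_\eta$ is open in $V$ and $V$ is open in the smooth compactification $B$, this produces a rational map $\mu_{V_\eta}: B \dashrightarrow \overline M_h^{[N]}$ whose domain of definition contains $V_\eta$.

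Finally, to identify the image dimension with $\Var(f_U)$, I would appeal to the birational invariance of $\Var(\cdot)$ recorded immediately after Theorem-Definition~\ref{def:VAR}: because $f_{U'}$ is a relative good minimal model birational to $f_U$ over $V$, we have $\Var(f_{U'}) = \Var(f_U)$. Combining this with the dimension equality from Theorem~\ref{VarModuli} gives $\dim(\Im \mu_{V_\eta}) = \Var(f_U)$.

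I do not expect any real obstacle: the corollary is essentially a bookkeeping assembly of previously established results. The only points worth flagging are (i) the dependence of $\sL$ and $\mu_{V_\eta}$ on the initial choice of relative minimal model $f_{U'}$---which the statement already acknowledges---and (ii) the standard projective compactification of the quasi-projective coarse moduli $M_h^{[N]}$.
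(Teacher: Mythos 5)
Your proof is correct and follows exactly the assembly the paper leaves implicit (there is no written proof of the corollary in the text): existence of a relative good minimal model via \cite{HMX18}, the dimension equality from Theorem~\ref{VarModuli}, the quasi-projective reduction of Notation~\ref{not:cover}, a projective compactification of $M_h^{[N]}$, extension to a rational map on $B$ through the dense open $V_\eta$, and birational invariance of $\Var(\cdot)$. Nothing is missing and nothing extraneous is used.
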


\section{Base spaces of families of manifolds with good minimal models}
\label{sect:Section4-MainThm}

To prove Theorems~\ref{thm:sheaves} and~\ref{thm:iso}  we will use the moduli functor in 
Theorem~\ref{VarModuli} to construct a new family $f_Z: X_Z \to Z$ out of the initial $f: X\to B$ over which 
the variation is maximal (Proposition~\ref{prop:maximal} below). 
Serving as a key component of the proof of Theorem~\ref{thm:sheaves}, the subsystems of canonical extensions of 
VHS 
in Section~\ref{sect:Section2-Subsheaves} will then be constructed for $f_Z$
and various families arising from it (see \cite[Lem.~2.8]{VZ02}).

\begin{notation}
For a flat morphism $f: X\to Y$ of regular schemes, by $X^{(r)}$ we denote a strong desingularization of
the $r$-th fiber product over $Y$
$$
X^r: = \underbrace{X \times_Y X \times_Y \ldots \times_Y X}_{\text{$r$ times}}.
$$
\end{notation}


Noting that as $M_h^{[N]}$ is taken to be quasi-projective (Theorem~\ref{thm:functor} and Notation~\ref{not:cover}), 
as in Corollary \ref{cor:FinalModuli} by $\overline M_h^{[N]}$ we denote its projective compactification. 
Since in what follows all our maps to $\overline{M}_h^{[N]}$ originate from reduced schemes, 
with no loss of generality 
we will assume that $\overline{M}_h^{[N]}$ is already reduced.

\begin{proposition}\label{prop:maximal}
In the setting of Corollary~\ref{cor:FinalModuli}, assume that 
$\Var(f_U)\neq 0$, $\Var(f_U)\neq \dim (V)$ (see Definition~\ref{def:VAR}) and set $n=\dim(X/B)$.
After replacing $B$ by a birational model, let $\overline \mu_{V_{\eta}}: B\to \overline M_h^{[N]}$ 
be a desingularization of $\mu_{V_{\eta}}$. 
Then, 
there are smooth projective  
varieties $Z^+$ and $Z$, a morphism $\gamma: Z^+ \to B$ and, after removing a
subscheme of $Z^+$ of $\codim_{Z^+}\geq 2$, a morphism $g: Z^+\to Z$ that fit 
into the commutative diagram

\begin{equation}\label{KeyDiagram}
\xymatrix{
X \ar[d]_f    &  X' \ar[l]_{\gamma'} \ar[drr]^{f'} &&  X_Z^+  \ar[d]^{f_Z^+}  \ar@{-->}[ll]_{\pi}  \ar[rr]     &&  X_Z \ar[d]^{f_Z} \\
B    &&&  Z^+  \ar[lll]_{\gamma}  \ar[rr]^g &&    Z,
}
\end{equation}
verifying the following properties.

\begin{enumerate}
\item \label{item:1} We have $\dim(B) = \dim(Z^+) > \dim(Z)$, where $\dim Z = \Var(f_U)$.
Moreover, there is a morphism
$\overline{\mu}_{V_{\eta}}: B \to W$ with positive relative dimension, connected fibers and a generically finite map
$\mu_Z : Z  \to W$ such that the diagram

$$
\xymatrix{
B   \ar[drrr]_{\overline{\mu}_{V_{\eta}}} & Z^+  \ar[l]_{\gamma}  \ar[rr]^g  &&  Z    \ar[d]^{\mu_Z}  \\
      &&& W
}
$$
commutes. 

\item \label{item:2} The morphism 
$g: Z^+ \to Z$ is a codimension one flattening of a proper morphism (see Definition~\ref{def:flatten}). 

\item \label{item:3} The two schemes $X_Z^+$ and $X_Z$ are regular and quasi-projective. The 
two morphisms $f_Z$ and $f_Z^+$ are projective with connected fibers.
Moreover, $f_Z$ is semistable. 

\item \label{item:4} With $X'$ being a strong desingularization of $X \times_B  Z^+$, there is a birational map 
$\pi: X^+_Z \dashrightarrow X'$ over $Z^+$. The morphism 
$f': X' \to Z^+$ is the naturally induced map. 

\item \label{item:5} For any $r\in \bN$ there is an induced diagram 
involving similarly defined morphisms $f^{(r)}: X^{(r)} \to B$, 
$f_Z^{(r)}: X^{(r)}_Z \to Z$, ${f_Z^+}^{(r)} : {X_Z^+}^{(r)}\to Z^+$ 
and ${f'}^{(r)} : {X'}^{(r)} \to Z^+$ 
commuting with the ones in Diagram~\ref{KeyDiagram}.

\item \label{item:6} For any sufficiently large and divisible $m$, the line bundle 
defined by the reflexive hull of
$\det (f_Z)_* \omega^{m}_{X_Z/Z}$ is big, 
implying that for a sufficiently large $N\in \bN$ there is an ample line bundle 
$\sA_Z:= \big(\det (f_Z)_* \omega^{m}_{X_Z/Z}\big)^{[N]} (- D_Z)$, 
for some effective divisor $D_Z\geq D_{f_Z}$ on Z.
Moreover, for sufficiently large integers $m$, 
there is $r\in \bN$ such that $H^0 ( X^{(r)}_Z , \sM^m ) \neq 0$, where 
\begin{equation}\label{SmallSheaf}
\sM: = \Omega^{rn}_{X_Z^{(r)}/Z} ( \log \Delta_{f_Z^{(r)}} ) \otimes ( f_Z^{(r)} )^* \sA_Z^{-1} .
\end{equation}


 \end{enumerate}
\end{proposition}
\begin{proof}
Let $W$ be the image of $\overline{\mu}_{V_\eta}$. 
Using Stein factorization we replace $W$ by a finite
covering so that $\overline \mu_{V_{\eta}}$ has connected fibers. 
Take $Z\subset B$ to
be a sufficiently general, smooth and complete-intersection subvariety 
such that $\mu_Z: = \overline{\mu}_{V_{\eta}}|_{Z}: Z\to W$ is generically finite.
By Corollary~\ref{cor:FinalModuli} we have $\dim Z =\Var(f_U)$.
 
Define $Z^+$ to be a desingularization of the normalization of 
$B\times_W Z$. Let $\gamma: Z^+ \to B$ be the resulting naturally defined map.

\smallskip

For Item~\ref{item:2}, 
let $\wtilde g: \wtilde{Z^+} \to Z'$ be a flattening of $g$ so that, after
removing a subscheme of $Z^+$ of $\codim_{Z^+}\geq 2$, the induced map
$g: Z^+ \to Z'$ is a codimension one flattening. We now replace $Z$ by $Z'$.

\smallskip

As for Item~\ref{item:3}, take $X_Z$ to be a strong desingularization of the pullback of $f: X\to B$ via 
the morphism $Z\to B$. 
Let $\widehat Z\to Z$ be a cyclic, flat morphism associated to a semistable reduction 
$\widehat f_Z: Z_{\widehat Z}\to \widehat Z$ in codimension one for $f_Z$.
Again, after removing a subset of $Z$ of $\codim_{Z}\geq 2$ (and therefore of $Z^+$ as $g$ is flat), 
we replace $f_Z$ by $\widehat f_Z$.
$X_{Z}^+$ is a 
desingularization of $X_Z \times_Z Z^+$. 

\smallskip

For Item~\ref{item:4}, let $V_{\eta}$ be as in Theorem~\ref{VarModuli}. Let $U'$ be a good minimal model 
over $V_{\eta}$ and set $Z_{\eta}$  to denote the restriction of $Z$ to $V_{\eta}$. 
Define $Z_{\eta}^+:= Z_{\eta} \times_{W} V_{\eta}$. As before 
$U'_{Z_{\eta}}$ and $U'_{Z_{\eta}^+}$ denote the pullback of $U'$ 
via $Z_{\eta}\to V_{\eta}$ and $Z_{\eta}^+\to V_{\eta}$, respectively. 
Next, define $(U'_{Z_{\eta}})^+$ to be the pullback of $U'_{Z_{\eta}}\to Z_{\eta}$
through $Z^+_{\eta}\to Z_{\eta}$.
Summarizing this construction we have:

$$
\xymatrix{
&&                   (U'_{Z_{\eta}})^+ \ar[rr]  \ar[d]  &&    U'_{Z_{\eta}}  \ar[d]  \\
U'_{Z_{\eta}^+}  \ar[rr]  \ar[d] &&    Z^+_{\eta}   \ar[rr]  \ar[d]  &&   Z_{\eta}  \ar[d]  \\
U'  \ar[rr]   &&    V_{\eta}  \ar[rr]            &&   W    .
}
$$
\begin{claim}\label{claim:ISOM}
Up to a finite covering of $Z^+_{\eta}$, $(U'_{Z_{\eta}})^+$ is isomorphic to $U'_{Z_{\eta}^+}$ over $Z^+_{\eta}$.
\end{claim}

\noindent 
\emph{Proof of Claim~\ref{claim:ISOM}.} 
This follows from the above construction, finiteness of the polarized 
automorphism groups in Remark.~\ref{finiteAut} and the following fact, 
which is a consequence of representability of the isomorphism scheme 
for polarized projective varieties (cf.~\cite[Sect.~7]{KK10} for the canonically polarized case). 

\noindent
\emph{Fact.}
Assume that $X_i$ and $Y$ are quasi-projective varieties, $i=1,2$.
Let $f_i: (X_i \to Y , \sL_i )$ be two polarized flat projective families of varieties 
such that for every $y \in Y$ we have  $| \mathrm{Aut} (X_i, \sL_i)_y  |< \infty$
and $(X_1, \sL_1)_y \cong (X_2, \sL_2)_y$. 
Then, there is a finite surjective morphism $\sigma : Y'\to Y$ such that 
$(X_1)_{Y'} \cong (X_2)_{Y'}$, extending the fiber-wise isomorphism. 
\qed

Therefore, without loss of generality we may replace $Z^+$ by the finite covering defined in Claim~\ref{claim:ISOM}.
Consequently, we find a birational map $\pi: X^+ \dashrightarrow   X'$ over $Z^+$ as in 
Item~\ref{item:4}.
%
%

\smallskip

Item~\ref{item:5} can be easily checked.

Item~\ref{item:6} is a deep result of Kawamata~\cite[Thm.~1.1]{Kawamata85}
for smooth families of projective varieties with good minimal 
models, assuming that variation is maximal. 

Moreover, given $r_m:=\mathrm{rank}((f_Z)_* \omega^m_{X_Z/Z})$, we recall the natural 
inclusion 
\begin{equation}\label{eq:FF}
\big( \det(f_Z)_* \omega^m_{X_Z/Z}\big)^m  \subseteq \bigotimes^{mr_m} (f_Z)_* \omega^m_{X_Z/Z}  ,
\end{equation}
where, using the semistability of $f_Z$, the right-hand side is isomorphic to 
$(f_Z^{(r)})_* \omega^m_{X_Z^{(r)}/Z}$, with $r:=mr_m$, cf.~\cite[Sect.~3]{Viehweg83}.
This implies that 
$$
h^0\Big( \omega_{X_Z^{(r)}/Z} \otimes (f_Z^{(r)})^* (\det (f_Z)_* \omega^m_{X_Z/Z})^{-1} \Big)^m\neq 0.
$$
(Note that following the setting of the proposition we may ignore closed subset of $\codim_{Z}\geq  2$.)

Now, after replacing the power $m$ on the left-hand side of (\ref{eq:FF}) by $Nm$, and using 
the definition of $\sA_Z$ we find that 
\begin{equation}\label{eq:FFF}
h^0\Big(  \omega^m_{X_Z^{(r)}/Z}  \otimes (f_Z^{(r)})^* \big(   \sA_Z(D_Z)  \big)^{-m}   \Big)  \neq 0  ,
\end{equation}
where $r$ is now set as $Nmr_m$. 
As $D_{f_Z}= D_{f_Z^{(r)}}$ and $D_Z\geq D_{f_Z}$, we have 
$$
\omega^m_{X_Z^{(r)} /Z}  \otimes (f_Z^{(r)})^* (\sA_Z(D_Z))^{-m}  
\subseteq  \Big( \Omega^{rn}_{X_Z^{(r)}} \big( \log \Delta_{f_Z^{(r)}}  \big)  \Big)^{\otimes m} \otimes (f_Z^{(r)})^* \sA_Z^{-m}, 
$$
which together with (\ref{eq:FFF}) implies (\ref{SmallSheaf}).

 
\end{proof}

\subsection{Proof of Theorem~\ref{thm:sheaves}}\label{SSS}
Let $f: X\to B$ be a smooth compactification of $f_U$
so that, consistent with the rest of this paper, $D$ in the setting of the theorem will be replaced by the notation $D_f$.
When $\Var(f_U) = \dim B$, the theorem is due to~\cite{Vie-Zuo03b}, in the canonically polarized case,
and~\cite{PS15} in general (see also~\cite{Taj18}). So assume that $\Var(f_U) \neq \dim B$.

By Proposition~\ref{prop:maximal} we know that $f: X\to B$ fits inside 
the diagram (\ref{KeyDiagram}). For now let us identify $f$ with its base change. 
After replacing $X$ by $X^{(r)}$, for sufficiently large $r$ (Item~\ref{item:5}), let 
$\sM$ be the line bundle on $X_Z$ defined 
in~\ref{item:6}, namely 

\begin{equation}\label{eq:LB}
\sM  =  \Omega^n_{X_Z/Z} ( \log \Delta_{f_Z} )  \otimes f_Z^*\sA_Z^{-1}. 
\end{equation}
By Item~\ref{item:6} we know that $H^0( X_Z, \sM^m ) \neq 0$. 
For the moment we will assume that $\gamma$ is finite (and therefore flat). 
As such, and using the birational map $\pi$, the constructions and conclusions 
of Lemmas~\ref{PrelimLem},~\ref{lem:copy} and Proposition~\ref{Prop:GoodSS} 
are valid. As $\sG''_0= g^*\sA_Z$ (see Set-up~\ref{SetUpLem}), 
by using the map (\ref{eq:KS}), for some $k\in \mathbb N$, 
we have an injective morphism 
$$
 \sL_{Z^+}:= g^*( \underbrace{\sA_Z \otimes \sB_m}_{:= \sL_Z} )  \cong g^*\sA_Z  \otimes (\det \sN^+_m)^{-1}  
                \hooklongrightarrow  \big(   \gamma^*\Omega^1_B(\log D_f)   \big)^{\otimes k}.
$$
As $\sA_Z$ is big in $Z$ and $\mu_Z$ is generically finite, we have $\kappa( Z^+ , g^* \sL_Z ) \geq \Var(f) $. 
Let us denote the saturation of the image of 
\begin{equation}\label{eq:KeyInclusion}
 \sL_{Z^+} = g^* \sL_Z   \hooklongrightarrow  \gamma^* \big(   \Omega_B^1 (\log D_f)  \big)^{\otimes k}.
\end{equation}
by $\overline \sL_{Z^+}$.
After deleting appropriate subscheme of $B$ of $\codim_B\geq 2$, using its Galois closure, 
we may also assume that $\gamma: Z^+ \to B$ is Galois.
Set $G:= \Gal(Z^+/B)$.
It follows that the $G$-sheaf $\bigotimes_{a\in G} a^*\sL_{Z^+}$ descends~\cite[Thm.~4.2.15]{MR2665168}, that is

\begin{equation}\label{eq:descent}
 \bigotimes_{a\in G} a^*\overline\sL_{Z^+} \cong \gamma^* \sL,
  \end{equation}
for some line bundle $\sL$ on $B$. Therefore, 
the two line bundles in (\ref{eq:descent}) have the same
Kodaira dimension, cf.~\cite[Lem.~10.3]{Iitaka82} and we have
$ \kappa(B, \sL) \geq \kappa(Z^+, \sL_{Z^+}) \geq \Var(f_U)$,
as required.

Since, after removing a $\codim_B\geq 2$-subscheme of $B$, the morphism $\gamma$
is finite, and as our ultimate goal is birational, following the above argument,
we may assume with no loss of generality that $\gamma$ is indeed finite. 
This is not difficult to check (using Stein factorization) and we leave the details to the reader.

Finally, we note that the above argument shows that we may assume with 
no loss of generality that $f:X\to B$ is identified with the required finite base change  
in Proposition~\ref{prop:maximal}. This finishes the proof of Theorem~\ref{thm:sheaves}.

\medskip

\subsection{Proof of Theorem~\ref{thm:iso}}
To prove Theorem~\ref{thm:iso} we need a refinement of the statement 
of Theorem~\ref{thm:sheaves} in the sense of Theorem~\ref{thm:refine} . 
As one would expect, the proof of 
this refinement is inextricably intertwined with that of Theorem~\ref{thm:sheaves} itself.
We note that
in the canonically polarized case this refinement is due to 
Jabbusch-Kebekus~\cite{MR2976311}. 
The notion of \emph{orbifolds}, as developed by Campana, is the key ingredient 
for realizing this improvement of Theorem~\ref{thm:sheaves}. 
We refer to the original paper of Campana \cite{Cam04} 
for the basic definitions and further background. 
For reader's convenience, a brief summary of all required notions 
in this theory
has been included in the appendix.

\begin{set-up}\label{finalsetup}
We will be working in the setting of Proposition~\ref{prop:maximal}.
The line bundles $\sL_{Z^+}$ and $\sL_Z$ are the ones defined in (\ref{eq:KeyInclusion}) and (\ref{eq:descent}).
To lighten up the notations we will use $\overline \mu$ to denote $\overline \mu_{V_{\eta}}$. 
Let $D_f = D_f^v + D_f^h$ be the decomposition of 
$D$ into vertical and horizontal components with respect to $\overline \mu$. 
Let $W^0\subseteq W$ be the maximal open subset over which $\overline \mu: (B, D_f)\to W$
is neat.
By construction, over $W^0$ there is a natural map 
\begin{equation}\label{OrbiPB}
\overline \mu^* \big(  \Omega^1_{W^0} (\log \Delta_{W^0})^{\otimes_{\mathcal C} k}  \big)  
 \longrightarrow  \big( \Omega^1_B(\log D^v_f)  \big)^{\otimes k}, 
\end{equation}
(cf.~\cite[Sect.~5.B]{MR2860268}).
Set $\sB$ to be the saturation of the image of (\ref{OrbiPB}).
\end{set-up}

\begin{proposition}\label{finalProp}
Assume that $W^0=W$. Let $\sL$ be the line bundle in (\ref{eq:descent}). 
There is an injection $\sL \hookrightarrow \sB$.
\end{proposition}

\begin{proof}
Let $\sQ$ be the torsion free cokernel of $\sB \to \Omega^1_B(\log D_f^v)^{\otimes k}$.
Since $\gamma$ is flat, we have the short exact sequence 
$$
0 \longrightarrow  \gamma^*\sB \longrightarrow  \gamma^*\big( \Omega^1_B(\log D_f^v) \big)^{\otimes k} \longrightarrow \gamma^*\sQ \longrightarrow 0, 
$$
with $\gamma^*\sQ$ being torsion free in codimension one. 
\begin{claim}\label{claim:needinject}
Let $\sL_{Z^+}$ be the line bundle on $Z^+$ defined in (\ref{eq:KeyInclusion}). After removing 
a subset of $Z^+$ of $\codim_{Z^+}\geq 2$ if necessary,
the injection 
$$
i : \sL_{Z^+} \hooklongrightarrow  \gamma^*\big( \Omega^1_B(\log D_f) \big)^{\otimes k}
$$
factors through $\gamma^*\sB \subseteq  \gamma^*\big(  \Omega^1_B(\log D_f^v) \big)^{\otimes k}$.
\end{claim}
\noindent
\emph{Proof of Claim~\ref{claim:needinject}.}
First, we observe that by Lemma~\ref{lem:KS} 
we have $i: \sL_{Z^+} \hooklongrightarrow 
\gamma^*\big(  \Omega^1_B(\log D_f^v) \big)^{\otimes k} \subseteq \gamma^*\big( \Omega^1_B(\log D_f) \big)^{\otimes k}$.
Furthermore, again by Lemma~\ref{lem:KS}, over an open subset $Z^+_0 \subseteq Z^+$ (given by $Z^+\backslash D_{f^+}$) 
we have $i: \sL_{Z^+}|_{Z^+_0} \hookrightarrow (g^*\Omega^1_Z)|_{Z^+_0}$.
Using the commutativity of the diagram 
$$
\xymatrix{
 Z^+ \ar[rr]^{\gamma}  \ar[d]_{g}  &&  B  \ar[d]^{\overline \mu} \\
Z   \ar[rr]^{\mu_Z}     &&     W  ,
}
$$
this in particular implies that over an open subset of $W$ the line bundle $\sL_{Z^+}$
injects into $g^* \mu_Z^* \big(  \Omega^1_W(\log \Delta_W)^{\otimes_{\mathcal C} k}  \big)
= \gamma^* \overline\mu^* \big( \Omega^1_W(\log \Delta_W)^{\otimes_{\mathcal C} k}  \big)$. 
This means that at least over an open subset of $Z^+$ the factorization 
in Claim~\ref{claim:needinject} holds. In other words, the naturally induced map 
$$
\sL_{Z^+} \hookrightarrow  \gamma^* \sQ
$$
has a nontrivial kernel. 
As $\gamma^*\sQ$ is torsion free in codimension one, it follows 
that this map is zero in codimension one, implying 
the desired injection in Claim~\ref{claim:needinject}.  \qed

Now, we may assume with no loss of generality that the inclusion
$\sL_{Z^+} \subseteq \gamma^*\big(\Omega^1_B(\log D)\big)^{\otimes k}$ 
is saturated 
and that $\gamma$ is Galois. 
By (\ref{eq:descent}) we have $\sL_{Z^+}^{|G|} \cong \gamma^*\sL$. 
We may also assume that $|G|=1$ (as we may replace $k$ by $k |G|$). 
Now, by applying $\gamma_*(\cdot)^G$ to the 
injection $\gamma^*\sL\cong \sL_{Z^+} \hookrightarrow \gamma^*\sB$ in Claim~\ref{claim:needinject}
we find the injection $\sL \hookrightarrow \sB$ in codimension one, which as $\sB$ is reflexive, 
extends to an injection over $B$.
\end{proof}

\begin{theorem}\label{thm:refine}
In the situation of Setup~\ref{finalsetup}, let $\wtilde \mu: (\wtilde B, \wtilde D)\to \wtilde W$ 
be a neat model for $\overline \mu: (B, D_f) \to W$, 
via birational morphisms $\alpha$ and $\pi$ (see Definition~\ref{def:neatmodel}),
and with the orbifold base $(\wtilde W, \Delta_{\wtilde W})$ (Definition~\ref{orbibase}).
Let $\wtilde \sB$ be the saturation of the image of 
$$
(\wtilde \mu)^* \big( \Omega^1_{\wtilde W}  (\log \Delta_{\wtilde W})^{\otimes_{\mathcal C}k}  \big)  
\longrightarrow  \Omega^1_B \big(  \log (\wtilde D)^v \big)^{\otimes k}  ,
$$
where $(\wtilde D)^v$ denotes the vertical component of $\wtilde D$ (with respect to $\wtilde \mu$). 
Then, there is a line bundle $\wtilde \sL$ on $\wtilde B$, with $\kappa(\wtilde \sL) = \kappa(\sL)$, and equipped with 
an injection $\wtilde \sL \hookrightarrow \wtilde \sB$.
\end{theorem}

\begin{proof}
Let $\wtilde Z$ be a desingularization of the main component of
$\wtilde B \times_B Z^+$, with the naturally induced maps 
$\wtilde \gamma: \wtilde Z\to \wtilde B$ and $\wtilde\pi: \wtilde Z\to Z^+$. 
Set $\sL_{\wtilde Z}:= (\wtilde \pi)^*\sL_{Z^+}$.  
The proof is now the same as that of Proposition~\ref{finalProp} after replacing 
$\overline\mu: (B,D_f)\to W$, $\gamma:Z^+\to B$ and $\sL_{Z^+}$, 
by $\wtilde \mu: (\wtilde B, \wtilde D)\to \wtilde W$, $\wtilde \gamma: \wtilde Z\to \wtilde B$ 
and $\sL_{\wtilde Z}$, respectively. 
\end{proof}

\subsubsection{Generic descent to coarse space as an orbifold base; conclusion of the proof of Theorem~\ref{thm:iso}}
With Theorem~\ref{thm:refine} at hand, noting that $\kappa(\sL) \geq \dim(W)$, 
the proof of Theorem~\ref{thm:iso} is now identical to \cite{Taji16}, for which~\cite{CP14} or~\cite{CP16} provides a vital ingredient 
(see also Claudon's  Bourbaki exposition~\cite{Claudon15}).

Aiming for a contradiction, we assume that $f_U$ is not isotrivial. 
Thanks to the already established results in the maximal variation case (\cite{Vie-Zuo03b},~\cite{CP14}  and~\cite{PS15})
we know $\Var(f_U) \neq \dim(V)$. In particular the constructions of Proposition~\ref{prop:maximal} 
and those of Subsection~\ref{SSS} apply. 
Furthermore, as specialness is a birational invariant for log-smooth pairs, 
we may replace $(B,D) \to W$ (with $D=D_f$) by its neat model
$(\wtilde B, \wtilde D)\to \wtilde W$ as in Theorem~\ref{thm:refine}. 
At this point~\cite[Cor.~5.8]{MR2860268} applies, 
that is there is a line bundle $\sL_{\wtilde W}$ 
in $\big(\Omega^1_{\wtilde W}(\log \Delta_{\wtilde W})\big)^{\otimes_{\mathcal C}N}$, for some $N\in \bN$, 
with $\kappa_{\mathcal C} (\wtilde W,  \sL_{\wtilde W})= \dim \wtilde W$ (see~Definition~\ref{ckappa} 
for the definition of $\kappa_{\mathcal C}$).
It then follows that $\kappa( \wtilde W, \Delta_{\wtilde W} ) = \dim \wtilde W$~\cite[Thm.~5.2]{Taji16}, 
which in turn implies that $(\wtilde B, \wtilde D)$ is not special, and thus 
neither is $(B, D)$, contradicting our initial assumption.

\section{Appendix: background on orbifolds}
In this appendix for the convenience of the reader we review some basic elements 
of Campana's theory of orbifolds \cite{Cam04}. 
Most of the definitions in this appendix have been taken from~\cite{Taji16}.

\begin{definition}
A pair, or an orbifold pair, consists of a variety $Y$ and a $\mathbb Q$-Weil divisor 
$D= \sum d_i D_i$, where $d_i \in [0,1]\cap \mathbb Q$ and each $D_i$ is prime. 
We say $(Y,D)$ is snc, if $Y$ is smooth and $D$ has a simple normal crossing support.   
\end{definition}

Given a quasi-projective morphism $h: (Y,D) \to W$ with connected fibers and $\mathbb Q$-factorial $W$, there is 
a pair $(W, \Delta_W)$, referred to as the \emph{orbifold base} or $\mathcal C$-base associated to $h$ and $D$, see for 
example~\cite[Def.~5.3]{MR2860268} or Definition~\ref{orbibase}.

\begin{definition}
We say $h:(Y,D) \to W$ as above is neat, if 
\begin{enumerate}
\item $(Y,D)$ and the orbifold base $(W, \Delta_W)$ are snc, and
\item every $h$-exceptional divisor $P\subset Y$ ($\codim_W(h(P)) \geq 2$) is contained in $\Supp(D)$
as a reduced divisor. 
\end{enumerate}
\end{definition}

\begin{definition}[\protect{cf.~\cite[Def.~4.1]{Taji16}}]
\label{def:neatmodel}
Given any pair $(Y,D)$ and a morphism $h: Y \to W$, a neat 
morphism $\wtilde h: (\wtilde Y, D_{\wtilde Y}) \to \wtilde W$
is called a \emph{neat model} for $h:(Y,D)\to W$, if
\begin{enumerate}
\item $\wtilde h: \wtilde Y \to \wtilde W$ is birationally equivalent to $h: Y\to W$, i.e.,
we have a commutative diagram 
$$
\xymatrix{
\wtilde Y \ar[rr]^{\pi}  \ar[d]_{\wtilde h} &&  Y \ar[d]^h  \\
\wtilde W  \ar[rr]^{\alpha}    &&   W  , 
}
$$
where $\alpha, \pi$ are birational morphisms, and  
\item $D_{\wtilde Y}$ is the sum of the total $\pi$-transform 
of $D$ and all $\wtilde h$-exceptional divisors in $\wtilde Y$ (as reduced divisors). 
\end{enumerate}
\end{definition}

\begin{remark}
For any $(Y,D)$ and $h:Y\to W$ as in Definition~\ref{def:neatmodel} a neat model can be constructed (not uniquely), 
cf.~\cite[Sect.~10]{MR2860268} or \cite[Prop.~4.2]{Taji16}. 
The construction is originally due to Campana. 
\end{remark}

\begin{notation}
Given an snc pair $(Y, D)$, for each $i \in \mathbb N$, by 
$\Omega^1_Y(\log D)^{\otimes_{\mathcal C} i}$ we refer to the $i$-th 
tensorial orbifold (or $\mathcal C$-)differential forms. 
See for example~\cite[Def.~2.8, Rk.~2.11]{Taji16} or Remark~\ref{tensorial}.
\end{notation}

\begin{definition}\label{multiplicity}
Let $(X,D = \sum d_i D_i)$ be snc. When $d_i\neq 1$, let $a_i$ and $b_i$ be the positive integers for 
which the equality $1-\frac{b_i}{a_i}=d_i$ holds. For every $i$, we define the $\mathcal{C}$-multiplicity of the irreducible component $D_i$ of $D$ by 
$$
  m_D(D_i) := \left\{ 
    \begin{matrix}
      \frac{1}{1-d_i}=\frac{a_i}{b_i}& \text{if $d_i\neq 1$ } \\
      \infty & \text{ if $d_i=1.$  }
    \end{matrix}
  \right.
  $$
\end{definition}


\begin{definition} Let $(X,D)$ be a smooth pair, $Y$ a smooth variety, and $\gamma:Y \to X$ a finite, flat, Galois cover with Galois group $G$ such that if $m_D(D_i)=\frac{a_i}{b_i}<\infty$, then every prime divisor in $Y$ that appears in $\gamma^*(D_i)$ has multiplicity exactly equal to $a_i$. We call $\gamma$ an adapted cover for the pair $(X,D)$, if it additionally satisfies the following properties:  
\begin{enumerate}
\item The branch locus is given by $$\Supp(H+\bigcup_{m_D(D_i)\neq \infty} D_i),$$ where H is a general member of a linear system $|L|$ of a very ample divisor $L$ in $X$.
\item $\gamma$ is totally branched over $H$.
\item $\gamma$ is not branched at the general point of $\Supp(\lfloor D \rfloor)$.
\end{enumerate}
\end{definition}

\begin{notation} Let $\gamma:Y\to X$ be an adapted cover of a smooth pair $(X,D)$, 
where $D=\sum d_iD_i$, $d_i=1-\frac{b_i}{a_i}$ as in Definition~\ref{multiplicity}. For every prime component $D_i$ of $D$ with $m_D(D_i)\neq \infty$, let $\{D_{ij}\}_{j(i)}$ be the collection of prime divisors that appear in $\gamma^{-1}(D_i)$. We define new divisors in Y by 
\begin{flalign}\label{not}
&D_Y^{i,j}:=b_iD_{ij} \ , \quad m_D(D_i)\neq \infty,\\
&D_{\gamma}:=\gamma^*(\lfloor D\rfloor).
\end{flalign}
\end{notation}

\begin{definition} Given an snc pair $(X,D)$ with an adapted 
cover $\gamma:Y\to X$, define the $\mathcal{C}$-cotangent sheaf (or orbifold cotangent sheaf) 
$\Omega^1_{(Y, \gamma, D)}$ to 
be the unique maximal locally-free subsheaf of $\Omega^1_Y(\log D_{\gamma})$ for which the sequence 
$$
\xymatrix{
0 \ar[r] & \Omega^1_{(Y, \gamma, D)} \ar[r]  
& \gamma^*\bigl(\Omega^1_X(\log (\ulcorner D \urcorner))\bigr)  \ar[r]^(.6){\rho}  
& \bigoplus \limits_{i,j(i)}\sO_{D_Y^{i,j}} \ar[r] & 0,
}
$$
induced by the natural residue map, is exact. 
\end{definition}

\begin{definition}\label{sodf} 
Let $(X,D)$ be snc, $D=\sum d_iD_i$, and $V_x$ an open neighbourhood of a given point $x\in X$ equipped with a coordinate system $z_1,\ldots,z_n$ such that $\Supp(D)\cap V_x=\{z_1\cdot \ldots\cdot z_l=0\}$, for a positive integer $1\leq l\leq n$. For every $N\in\mathbb{N}^+$, define the sheaf 
of symmetric orbifold or $\mathcal{C}$-differential forms $\Sym_{\mathcal{C}}^N\bigl(\Omega^1_X(\log D)\bigr)$ by the locally-free subsheaf of $\Sym^N \bigl(\Omega^1_X(\log(\ulcorner D \urcorner))\bigr)$ that is locally-generated, as an $\sO_{V_x}$-module, by the elements 
 $$\frac{dz_1^{k_1}}{z_1^{\lfloor d_1\cdot k_1\rfloor}} \cdot \ldots \cdot \frac{dz_l^{k_l}}{z_l^{\lfloor d_l\cdot k_l \rfloor}} \cdot dz_{l+1}^{k_{l+1}} \cdot \ldots \cdot dz_n^{k_n},$$
\noindent where $\sum k_i =N$.
\end{definition}

\begin{remark}\label{equiv}
There is an alternative definition for the sheaf of $\mathcal{C}$-differential forms: 
Let $V_x$ be an open neighbourhood of $x\in X$ as in Definition~\ref{sodf} and take $\gamma:W \to V_x$ to be an adapted cover for $(V_x,D|_{V_x})$. Let $\sigma \in \Gamma\bigl(V_x,\Sym^{N}\bigl(\Omega^1_X(*\ulcorner D \urcorner)\bigr)\bigr)$, that is $\sigma$ is a 
local rational section of $\Sym^{N}(\Omega_X)$ with poles along $\ulcorner D \urcorner$. Then,
\begin{equation}\label{alternative}
   \sigma \in \Gamma\bigl(V_x,\Sym_{\mathcal{C}}^N\bigl(\Omega^1_X (\log D)\bigr)\bigr) \iff  \gamma^{*}(\sigma)\in    \Gamma\bigl(W,\Sym^{N}
   (\Omega^1_{(W, \gamma, D|_{V_x})} \bigr),
        \end{equation}
 \noindent So that, in particular, $\gamma^*(\sigma)$ has at worst logarithmic poles \emph{only} along those prime divisors in $W$ that dominate $(\lfloor D \rfloor\cap V_x)$, and is regular otherwise.
 
 \begin{explanation} Assume that $\sigma\in \Gamma\bigl(V_x,\Sym_{\mathcal{C}}^N\bigl(\Omega^1_X(\log D)\bigr)\bigr)$ is a local $\mathcal{C}$-differential form in the sense of (\ref{alternative}). 
 It follows that $\sigma \in \Gamma\bigl(V_x,\Sym^N\bigl(\Omega^1_X\log(\ulcorner D \urcorner)\bigr)\bigr)$. In particular we find that along the reduced component of $D$ the equivalence between the two definitions trivially holds. So assume, without loss of generality, that $m_D(D_i)\neq \infty$, for all irreducible components $D_i$ of $D$. Furthermore let us assume, for simplicity, that 
$$\sigma=f\cdot \frac{dz_1^{k_1}}{z_1^{e_1}}\cdot \ldots \cdot \frac{dz_l^{k_l}}{z_l^{e_l}}\cdot dz_{l+1}^{k_l+1}\cdot \ldots \cdot dz_n^{k_n} \ \ \in \Gamma\bigl((V_x,\Sym^{N}\bigl(\Omega^1_X (\log(\ulcorner D \urcorner))\bigr)\bigr) ,$$ where $f\in \sO_{V(x)}$ with no zeros along $D_i$'s, is the local explicit description of $\sigma$. Since $\gamma^*(\sigma)\in \Sym^N(\Omega^1_{(W, \gamma, D|_{V_x})})$, the inequality 
$$k_i\cdot(a_i-1)-a_i\cdot e_i \geq k_i(b_i-1)$$ holds for $1\leq i\leq l$, where $d_i=1-(b_i/a_i)$, i.e.

$$e_i \leq k_i.d_i , \text{\ \ for all}\  1\leq i\leq l .$$ In particular $\sigma$ is a symmetric $\mathcal{C}$-differential form on $V_x$ in the sense of Definition~\ref{sodf}.
 \end{explanation}
 \end{remark}

\begin{remark}[Tensorial $\mathcal{C}$-Differential Forms]\label{tensorial} 
Similar to the Definitions~\ref{sodf} and (\ref{alternative}), we can define the sheaf of tensorial $\mathcal{C}$-differential forms $\bigl(\Omega^1_X(\log D)\bigr)^{\otimes_{\mathcal{C}}N}$ as the maximal subsheaf of $\bigl(\Omega^1_X(\log(\ulcorner D \urcorner))\bigr)^{\otimes N}$ such that 
\begin{equation*}
\gamma^*\Bigl(\bigl(\Omega^1_X(\log D)\bigr)^{\otimes_{\mathcal{C}}N}\Bigr)\subseteq (\Omega^1_{(Y, \gamma, D)})^{\otimes N}.
\end{equation*}
Using the notations in Remark~\ref{equiv}, pluri-$\mathcal{C}$-differential forms are locally defined as follows: 
\begin{equation}\label{alternative-tensor}
   \sigma \in \Gamma\bigl(V_x,\bigl(\Omega^1_X (\log D)\bigr)^{\otimes_{\mathcal{C}}N}\bigr) \iff  \gamma^{*}(\sigma)\in    
   \Gamma\bigl(W, \Omega^1_{(W, \gamma, D|_{V_x})}  \bigr) .
        \end{equation}
\end{remark}

\begin{definition}\label{ckappa}
Let $(X,D)$ be an snc pair and $\sL\subseteq \bigl(\Omega^1_X(\log D)\bigr)^{\otimes_{\mathcal{C}}r}$ a saturated coherent subsheaf of rank one. Define the $\mathcal{C}$-product $\sL^{\otimes_{\mathcal{C}}m}$ of $\sL$, to the order of $m$, to be the saturation of the image of  $\sL^{\otimes m}$ inside $\bigl(\Omega^1_X(\log D)\bigr)^{\otimes_{\mathcal{C}}(m.r)}$ and define the $\mathcal{C}$-Kodaira dimension of $\sL$ by $$\kappa_{\mathcal{C}}(X,\sL):=\max \{k \; | \; \limsup_{m \to \infty} \frac{h^0\bigl(X,\sL^{\otimes_{\mathcal{C}}m}\bigr)}{m^k} \neq 0 \}, $$ and when $h^0\bigl(X,\sL^{\otimes_{\mathcal{C}}m}\bigr)=0$ for all $m\in \mathbb{N^+}$, then, by convention, we define $\kappa_{\mathcal{C}}(X,\sL)=-\infty$.
\end{definition}

\begin{definition}[Orbifold-base; the reduced case]
\label{orbibase}
Let $(Y,D)$ be a pair and assume that $D$ is reduced and $Y$ is normal. 
Given a morphism $h: Y\to X$ with connected, positive-dimensional fibers and reduced, factorial $X$, 
let $\disc(h) = \bigcup \Delta_i$ be the union of divisorial components of $\disc(h)$. 
For every $i$, we have 
$$
h^*\Delta_i = \sum a_j \Delta'_{ij(i)} + E_i  ,
$$
where each $\Delta'_{ij}$ is prime, $a_i \in \bN$ and $E_i$ is $h$-exceptional. 
For each $i$, if $\Delta'_{ij}\subseteq \Supp(D)$, for every $\Delta'_{ij}$  in $h^{-1}(\Delta_i)$, 
set $m_{h,D}(\Delta_i) := \infty$. 
Otherwise, let 
$$
m_{h, D}(\Delta_i) :=  \min_{j} \{  a_j \;  \big|  \; \Delta'_{ij} \not\subseteq \Supp(D)    \} .
$$
Define the orbifold base (or $\mathcal C$-base) of $h:(Y,D)\to X$ by 
$$
\Delta_X(h, D) : =  \sum_i \big(  1- \frac{1}{m_{h,D}(\Delta_i)}    \big) \Delta_i   .
$$
\end{definition}

\medskip




\bibliography{bibliography/general}

\begin{bibdiv}
\begin{biblist}

\bib{Arakelov71}{article}{
      author={Arakelov, Sergei~J.},
       title={Families of algebraic curves with fixed degeneracies},
        date={1971},
        ISSN={0373-2436},
     journal={Izv. Akad. Nauk SSSR Ser. Mat.},
      volume={35},
       pages={1269\ndash 1293},
      review={\MR{MR0321933 (48 \#298)}},
}

\bib{BG71}{article}{
      author={Bloch, Spencer},
      author={Gieseker, David},
       title={The positivity of the chern classes of an ample vector bundle},
        date={1971},
     journal={Invent. Math.},
      number={12},
       pages={112\ndash 117},
}

\bib{BDPP}{article}{
      author={Boucksom, S{\'e}bastien},
      author={Demailly, Jean-Pierre},
      author={P{\u a}un, Mihai},
      author={Peternell, Thomas},
       title={The pseudo-effective cone of a compact {K}{\"a}hler manifold and
  varieties of negative {K}odaira dimension},
        date={2013},
        ISSN={1056-3911},
     journal={J. Algebraic Geom.},
      volume={22},
      number={2},
       pages={201\ndash 248},
        note={\href{http://arxiv.org/abs/math/0405285}{arXiv:math/0405285}},
}

\bib{Cam04}{article}{
      author={Campana, Fr{\'e}d{\'e}ric},
       title={Orbifolds, special varieties and classification theory},
        date={2004},
        ISSN={0373-0956},
     journal={Ann. Inst. Fourier (Grenoble)},
      volume={54},
      number={3},
       pages={499\ndash 630},
      review={\MR{MR2097416 (2006c:14013)}},
}


\bib{CP14}{article}{
      author={Campana, Fr{\'e}d{\'e}ric},
      author={P{\u a}un, Mihai},
       title={Positivity properties of the bundle of logarithmic tensors on
  compact {K}{\"a}hler manifolds},
        date={2016},
     journal={Compositio Math},
      volume={152},
      number={11},
       pages={2350\ndash 2370},
        note={\href{http://arxiv.org/abs/1407.3431}{arXiv:1407.3431}},
}

\bib{CP16}{article}{
      author={Campana, Fr{\'e}d{\'e}ric},
      author={P{\u a}un, Mihai},
       title={Foliations with positive slopes and birational stability of
  orbifold cotangent bundles},
        date={2019},
     journal={Inst. Hautes {\'E}tudes Sci. Publ. Math.},
      volume={129},
      number={1},
       pages={1\ndash 49},
        note={\href{https://arxiv.org/abs/1508.02456}{arXiv:1508.02456}},
}

\bib{Claudon15}{article}{
       author = {Claudon, Beno{\^\i}t},
       title = {semi-positivit\'e du cotangent logarithmic et conjecture de Shafarevich et Viehweg 
       (d'apr\'es {C}ampana, {P}\u{a}un, {T}aji, ...)}, 
       date = {2015}, 
       journal = {S\'eminaire Bourbaki},
       url = {http://arxiv.org/abs/1310.5391},
       }

\bib{CKM88}{article}{
      author={Clemens, Herbert},
      author={Koll{\'a}r, J{\'a}nos},
      author={Mori, Shigefumi},
       title={Higher-dimensional complex geometry},
        date={1988},
        ISSN={0303-1179},
     journal={Ast{\'e}risque},
      number={166},
       pages={144 pp. (1989)},
      review={\MR{MR1004926 (90j:14046)}},
}

\bib{ConDualBook}{book}{
      author={Conrad, Brian},
      title={Grothendieck Duality and Base Change},
      publisher={Springer},
      number={1750},
      date={2000},
      note={Lecture notes in Math.},
}

\bib{DeligneHodgeII}{article}{
      author={Deligne, Pierre},
       title={Th{\'e}orie de {H}odge. {II}},
        date={1971},
        ISSN={0073-8301},
     journal={Inst. Hautes {\'E}tudes Sci. Publ. Math.},
      number={40},
       pages={5\ndash 57},
      review={\MR{0498551 (58 \#16653a)}},
}

\bib{Deligne87}{book}{
      author={Deligne, Pierre},
       title={Un th{'e}or{`e}me de finitude la monodromie},
   publisher={Progr. Math.},
        date={1987},
        note={Discrete groups in geometry and analysis (New Haven, Conn.,
  1984)},
}

\bib{Elkik78}{article}{
      author={Elkik, Ren{\'e}e},
       title={Singularit{\'e}s rationnelles et d{\'e}formations},
        date={1978},
        ISSN={0020-9910},
     journal={Invent. Math.},
      volume={47},
      number={2},
       pages={139\ndash 147},
      review={\MR{501926 (80c:14004)}},
}

\bib{revI}{book}{
      author={Esnault, H{\'e}l{\`e}ne},
      author={Viehweg, Eckart},
       title={Rev\^etements cycliques},
        date={1982},
   booktitle={Algebraic threefolds ({V}arenna, 1981)},
      series={Lecture Notes in Math.},
      volume={947},
   publisher={Springer},
     address={Berlin},
       pages={241\ndash 250},
      review={\MR{672621 (84m:14015)}},
}

\bib{EV92}{book}{
      author={Esnault, H{\'e}l{\`e}ne},
      author={Viehweg, Eckart},
       title={Lectures on vanishing theorems},
        date={1992},
      series={DMV Seminar},
      volume={20},
   publisher={Birkh{\"a}user Verlag},
     address={Basel},
}

\bib{GR71}{article}{
      author={Gruson, Laurent},
      author={Raynaud, Michel},
       title={Crit\`eres de platitude et de projectivit\'e},
        date={1971},
     journal={Invent. Math.},
      number={13},
       pages={1\ndash 89},
}

\bib{HMX18}{article}{
      author={Hacon, Christopher~D.},
      author={McKernan, James},
      author={Xu, Chenyang},
       title={Boundedness of moduli of varieties of general type},
        date={2018},
     journal={J. Eur. Math. Soc. (JEMS)},
      volume={20},
      number={4},
       pages={523\ndash 571},
        note={\href{https://arxiv.org/abs/1412.1186}{arxiv.org/abs/1412.1186}},
}

\bib{Ha77}{book}{
      author={Hartshorne, Robin},
       title={Algebraic geometry},
   publisher={Springer-Verlag},
     address={New York},
        date={1977},
        ISBN={0-387-90244-9},
        note={Graduate Texts in Mathematics, No. 52},
      review={\MR{0463157 (57 \#3116)}},
}

\bib{Hassett-Kovacs04}{article}{
      author={Hassett, Brendan},
      author={Kov{\'a}cs, S{\'a}ndor~J},
       title={Reflexive pull-backs and base extension},
        date={2004},
        ISSN={1056-3911},
     journal={J. Algebraic Geom.},
      volume={13},
      number={2},
       pages={233\ndash 247},
      review={\MR{2047697 (2005b:14028)}},
}

\bib{MR2665168}{book}{
      author={Huybrechts, Daniel},
      author={Lehn, Manfred},
       title={The geometry of moduli spaces of sheaves},
     edition={Second},
      series={Cambridge Mathematical Library},
   publisher={Cambridge University Press},
     address={Cambridge},
        date={2010},
        ISBN={978-0-521-13420-0},
         url={http://dx.doi.org/10.1017/CBO9780511711985},
      review={\MR{2665168 (2011e:14017)}},
}

\bib{Iitaka82}{book}{
      author={Iitaka, Shigeru},
       title={Algebraic geometry},
      series={Graduate Texts in Mathematics},
   publisher={Springer-Verlag},
     address={New York},
        date={1982},
      volume={76},
        ISBN={0-387-90546-4},
        note={An introduction to birational geometry of algebraic varieties,
  North-Holland Mathematical Library, 24},
      review={\MR{637060 (84j:14001)}},
}

\bib{MR2860268}{article}{
      author={Jabbusch, Kelly},
      author={Kebekus, Stefan},
       title={Families over special base manifolds and a conjecture of
  {C}ampana},
        date={2011},
        ISSN={0025-5874},
     journal={Math. Z.},
      volume={269},
      number={3-4},
       pages={847\ndash 878},
         url={http://dx.doi.org/10.1007/s00209-010-0758-6},
      review={\MR{2860268 (2012k:14046)}},
}

\bib{MR2976311}{article}{
      author={Jabbusch, Kelly},
      author={Kebekus, Stefan},
       title={Positive sheaves of differentials coming from coarse moduli
  spaces},
        date={2011},
        ISSN={0373-0956},
     journal={Ann. Inst. Fourier (Grenoble)},
      volume={61},
      number={6},
       pages={2277\ndash 2290 (2012)},
         url={http://dx.doi.org/10.5802/aif.2673},
      review={\MR{2976311}},
}

\bib{JZ97}{article}{
      author={Jost, JÃŒrgen},
      author={Zuo, Kang},
       title={Harmonic maps of infinite energy and rigidity results for
  representations of fundamental groups of quasiprojective varieties},
        date={1997},
     journal={Journal of Differential Geometry},
      volume={47},
       pages={469\ndash 503},
  note={\href{http://projecteuclid.org/euclid.jdg/1214460547}{euclid.jdg/1214460547}},
}

\bib{KO68}{article}{
      author={Katz, Nicholas},
      author={Oda, Tadao},
       title={on the differentiation of the de rham cohomology classes with
  respect to parameters},
        date={1968},
     journal={J. Math. Kyoto Univ.},
      volume={1},
       pages={199\ndash 213},
}

\bib{Kawamata85}{article}{
      author={Kawamata, Yujiro},
       title={Minimal models and the {K}odaira dimension of algebraic fiber
  spaces},
        date={1985},
        ISSN={0075-4102},
     journal={J. Reine Angew. Math.},
      volume={363},
       pages={1\ndash 46},
         url={http://dx.doi.org/10.1515/crll.1985.363.1},
  note={\href{http://dx.doi.org/10.1515/crll.1985.363.1}{DOI:10.1515/crll.1985.363.1}},
      review={\MR{814013 (87a:14013)}},
}

\bib{Kawamata99}{article}{
      author={Kawamata, Yujiro},
       title={Deformations of canonical singularities},
        date={1999},
        ISSN={0894-0347},
     journal={J. Amer. Math. Soc.},
      volume={12},
      number={1},
       pages={85\ndash 92},
      review={\MR{99g:14003}},
}

\bib{KK08}{article}{
      author={Kebekus, Stefan},
      author={Kov{\'a}cs, S{\'a}ndor~J.},
       title={Families of canonically polarized varieties over surfaces},
        date={2008},
        ISSN={0020-9910},
     journal={Invent. Math.},
      volume={172},
      number={3},
       pages={657\ndash 682},
  note={\href{http://dx.doi.org/10.1007/s00222-008-0128-8}{DOI:10.1007/s00222-008-0128-8}.
  Preprint \href{http://arxiv.org/abs/0707.2054}{arXiv:0707.2054}},
      review={\MR{2393082}},
}

\bib{KK10}{article}{
      author={Kebekus, Stefan},
      author={Kov{\'a}cs, S{\'a}ndor~J.},
       title={The structure of surfaces and threefolds mapping to the moduli
  stack of canonically polarized varieties},
        date={2010},
        ISSN={0012-7094},
     journal={Duke Math. J.},
      volume={155},
      number={1},
       pages={1\ndash 33},
         url={http://dx.doi.org/10.1215/00127094-2010-049},
      review={\MR{2730371 (2011i:14060)}},
}

\bib{KM97}{article}{
      author={Keel, Se{\'a}n},
      author={Mori, Shigefumi},
       title={Quotients by groupoids},
        date={1997},
     journal={Annals Math.},
      volume={145},
      number={1},
       pages={193\ndash 213},
         url={https://doi.org/10.2307/2951828},
         note={Preprint
  \href{https://arxiv.org/abs/alg-geom/9508012}{arXiv:alg-geom/9508012}},
}

\bib{KM98}{book}{
      author={Koll{\'a}r, J{\'a}nos},
      author={Mori, Shigefumi},
       title={Birational geometry of algebraic varieties},
      series={Cambridge Tracts in Mathematics},
   publisher={Cambridge University Press},
     address={Cambridge},
        date={1998},
      volume={134},
        ISBN={0-521-63277-3},
      review={\MR{2000b:14018}},
}

\bib{KollarSingsOfTheMMP}{book}{
      author={Koll{\'a}r, J{\'a}nos},
       title={Singularities of the minimal model program},
      series={Cambridge Tracts in Mathematics},
   publisher={Cambridge University Press, Cambridge},
        date={2013},
      volume={200},
        ISBN={978-1-107-03534-8},
         url={http://dx.doi.org/10.1017/CBO9781139547895},
        note={With a collaboration of S{\'a}ndor Kov{\'a}cs},
      review={\MR{3057950}},
}

\bib{Kollar87}{book}{
      author={Koll{\'a}r, J{\'a}nos},
       title={Subadditivity of the {K}odaira dimension: f{i}bers of general
  type},
        date={1987},
   booktitle={Algebraic geometry, {S}endai, 1985},
      series={Adv. Stud. Pure Math.},
      volume={10},
   publisher={North-Holland},
     address={Amsterdam},
       pages={361\ndash 398},
      review={\MR{946244 (89i:14029)}},
}

\bib{Kollar90}{article}{
      author={Koll{\'a}r, J{\'a}nos},
       title={Projectivity of complete moduli},
        date={1990},
        ISSN={0022-040X},
     journal={J. Differential Geom.},
      volume={32},
      number={1},
       pages={235\ndash 268},
      review={\MR{1064874 (92e:14008)}},
}

\bib{Kol97}{article}{
      author={Koll{\'a}r, J{\'a}nos},
       title={Quotient spaces modulo algebraic groups},
        date={1997},
     journal={Annals Math.},
      volume={145},
      number={1},
       pages={33\ndash 79},
      review={\MR{1064874 (92e:14008)}},
      url={https://doi.org/10.2307/2951823},
      note={Preprint
  \href{https://arxiv.org/abs/alg-geom/9503007}{arXiv:alg-geom/9503007}},
}


\bib{KolBook17}{misc}{
      author={Koll{\'a}r, J{\'a}nos},
        note={Available online at author's webpage:
  \href{https://web.math.princeton.edu/~kollar/book/modbook20170720-hyper.pdf}{web.math.princeton.edu/~kollar/}.},
}

\bib{Kovacs02}{article}{
      author={Kov{\'a}cs, S{\'a}ndor~J},
       title={Logarithmic vanishing theorems and {A}rakelov-{P}arshin
  boundedness for singular varieties},
        date={2002},
        ISSN={0010-437X},
     journal={Compositio Math.},
      volume={131},
      number={3},
       pages={291\ndash 317},
      review={\MR{2003a:14025}},
}

\bib{Kovacs97c}{article}{
      author={Kov{\'a}cs, S{\'a}ndor~J},
       title={Families over a base with a birationally nef tangent bundle},
        date={1997},
        ISSN={0025-5831},
     journal={Math. Ann.},
      volume={308},
      number={2},
       pages={347\ndash 359},
      review={\MR{1464907 (98h:14039)}},
}


\bib{Knu71}{book}{
      author={Knutson, Donald},
       title={Algebraic spaces},
      series={Lecture Notes in Mathematics,},
   publisher={Springer},
     address={Berlin},
        date={1971},
      volume={203},
}

\bib{Laz04-I}{book}{
      author={Lazarsfeld, Robert},
       title={Positivity in algebraic geometry. {I}},
      series={Ergebnisse der Mathematik und ihrer Grenzgebiete. 3. Folge. A
  Series of Modern Surveys in Mathematics [Results in Mathematics and Related
  Areas. 3rd Series. A Series of Modern Surveys in Mathematics]},
   publisher={Springer-Verlag},
     address={Berlin},
        date={2004},
      volume={48},
        ISBN={3-540-22533-1},
        note={Classical setting: line bundles and linear series},
      review={\MR{2095471 (2005k:14001a)}},
}


\bib{MochKH}{article}{
      author={Mochizuki, Takuro},
       title={Kobayashi-{H}itchin correspondence for tame harmonic bundles and
  an application},
        date={2006},
        ISSN={0303-1179},
     journal={Asterisque},
      number={309},
       pages={viii+117},
      review={\MR{2310103}},
}

\bib{MochBookII}{article}{
      author={Mochizuki, Takuro},
       title={Asymptotic behaviour of tame harmonic bundles and an application
  to pure twistor {$D$}-modules. {II}},
        date={2007},
        ISSN={0065-9266},
     journal={Mem. Amer. Math. Soc.},
      volume={185},
      number={870},
       pages={xii+565},
         url={https://doi.org/10.1090/memo/0870},
        note={\href{https://doi.org/10.1090/memo/0870}{DOI:10.1090/memo/0870}},
      review={\MR{2283665}},
}

\bib{Parshin68}{article}{
      author={Parshin, Aleksey~N.},
       title={Algebraic curves over function fields. {I}},
        date={1968},
        ISSN={0373-2436},
     journal={Izv. Akad. Nauk SSSR Ser. Mat.},
      volume={32},
       pages={1191\ndash 1219},
      review={\MR{0257086 (41 \#1740)}},
}

\bib{MR2871152}{article}{
      author={Patakfalvi, {\relax Zs}olt},
       title={Viehweg's hyperbolicity conjecture is true over compact bases},
        date={2012},
        ISSN={0001-8708},
     journal={Adv. Math.},
      volume={229},
      number={3},
       pages={1640\ndash 1642},
         url={http://dx.doi.org/10.1016/j.aim.2011.12.013},
      review={\MR{2871152 (2012m:14072)}},
}



\bib{PS15}{article}{
      author={Popa, M.},
      author={Schnell, Christian},
       title={Viehweg's hyperbolicity conjecture for families with maximal
  variation},
        date={2017},
     journal={Invent. Math},
      volume={208},
      number={3},
       pages={677\ndash 713},
}


\bib{PTW}{article}{
      author={Popa, Mihnea},
      author={Taji, Behrouz},
      author={Wu, Lei},
       title={Brody hyperbolicity of base spaces of families of varieties with
  maximal variation},
        date={2019},
     journal={Algebra Number Theory},
      volume={13},
      number={9},
       pages={2205\ndash 2242},
  note={\href{https://arxiv.org/abs/1801.05898}{arxiv.org/abs/1801.05898}},
}


\bib{Siu98}{article}{
      author={Siu, Yum-Tong},
       title={Invariance of plurigenera},
        date={1998},
        ISSN={0020-9910},
     journal={Invent. Math.},
      volume={134},
      number={3},
       pages={661\ndash 673},
      review={\MR{1660941 (99i:32035)}},
}


\bib{Ste76}{article}{
      author={Steenbrink, Joseph},
       title={Limits of hodge structures},
        date={1976},
     journal={Invent. Math.},
      volume={31},
      number={3},
       pages={229\ndash 257},
}

\bib{Taji16}{article}{
      author={Taji, Behrouz},
       title={The isotriviality of smooth families of canonically polarized
  manifolds over a special quasi-projective base},
        date={2016},
        ISSN={1570-5846},
     journal={Compositio. Math.},
      volume={152},
       pages={1421\ndash 1434},
         url={http://journals.cambridge.org/article_S0010437X1600734X},
  note={\href{http://journals.cambridge.org/article_S0010437X1600734X}{DOI:10.1112/S0010437X1600734X}},
}

\bib{Taj18}{article}{
      author={Taji, Behrouz},
       title={On the kodaira dimension of base spaces of families of
  manifolds},
        date={2021},
        ISSN={0022-4049},
     journal={Journal of Pure and Applied Algebra},
      volume={225},
      number={11},
       pages={106729},
  url={https://www.sciencedirect.com/science/article/pii/S0022404921000694},
        note={Preprint
  \href{https://arxiv.org/abs/1809.05616}{arXiv:1809.05616}},
}

\bib{tak07}{article}{
      author={Takayama, Shigeharu},
       title={On the invariance and the lower semi-continuity of plurigenera of
  algebraic varieties},
        date={2007},
        ISSN={1056-3911},
     journal={J. Algebraic Geom.},
      volume={16},
      number={6},
       pages={1\ndash 18},
}

\bib{Viehweg83}{incollection}{
      author={Viehweg, Eckart},
       title={Weak positivity and the additivity of the {K}odaira dimension for
  certain fibre spaces},
        date={1983},
   booktitle={Algebraic varieties and analytic varieties (tokyo, 1981)},
      series={Adv. Stud. Pure Math.},
      volume={1},
   publisher={North-Holland},
     address={Amsterdam},
       pages={329\ndash 353},
      review={\MR{715656 (85b:14041)}},
}

\bib{Viehweg95}{book}{
      author={Viehweg, E.},
       title={Quasi-projective moduli for polarized manifolds},
      series={Ergebnisse der Mathematik und ihrer Grenzgebiete (3)},
   publisher={Springer-Verlag},
     address={Berlin},
        date={1995},
      volume={30},
        ISBN={3-540-59255-5},
      review={\MR{1368632 (97j:14001)}},
}

\bib{Vie-Zuo01}{article}{
      author={Viehweg, Eckart},
      author={Zuo, Kang},
       title={On the isotriviality of families of projective manifolds over
  curves},
        date={2001},
        ISSN={1056-3911},
     journal={J. Algebraic Geom.},
      volume={10},
      number={4},
       pages={781\ndash 799},
      review={\MR{1838979 (2002g:14012)}},
}

\bib{VZ02}{incollection}{
      author={Viehweg, Eckart},
      author={Zuo, K.},
       title={Base spaces of non-isotrivial families of smooth minimal models},
        date={2002},
   booktitle={Complex geometry (g{\"o}ttingen, 2000)},
   publisher={Springer},
     address={Berlin},
       pages={279\ndash 328},
      review={\MR{1922109 (2003h:14019)}},
}

\bib{Vie-Zuo03b}{incollection}{
      author={Viehweg, Eckart},
      author={Zuo, Kang},
       title={Discreteness of minimal models of {K}odaira dimension zero and
  subvarieties of moduli stacks},
        date={2003},
   booktitle={Surveys in differential geometry, vol.\ viii (boston, ma, 2002)},
      series={Surv. Differ. Geom., VIII},
   publisher={Int. Press, Somerville, MA},
       pages={337\ndash 356},
      review={\MR{2039995}},
}

\bib{Vie-Zuo03a}{article}{
      author={Viehweg, Eckart},
      author={Zuo, Kang},
       title={On the {B}rody hyperbolicity of moduli spaces for canonically
  polarized manifolds},
        date={2003},
        ISSN={0012-7094},
     journal={Duke Math. J.},
      volume={118},
      number={1},
       pages={103\ndash 150},
      review={\MR{1978884 (2004h:14042)}},
}

\bib{WW20}{article}{
      author={Wei, Chuanhao},
      author={Wu, Lei},
       title={Isotrivilaity of smooth families of varieties of general type},
        date={2020},
        note={Preprint
  \href{https://arxiv.org/abs/2001.08360}{arXiv:2001.08360}},
}

\bib{Zuo00}{article}{
      author={Zuo, Kang},
       title={On the negativity of kernels of {K}odaira-{S}pencer maps on
  {H}odge bundles and applications},
        date={2000},
        ISSN={1093-6106},
     journal={Asian J. Math.},
      volume={4},
      number={1},
       pages={279\ndash 301},
        note={Kodaira's issue},
      review={\MR{1803724 (2002a:32011)}},
}

\end{biblist}
\end{bibdiv}

\end{document}

\medskip